\DeclareFontFamily{OT1}{pzc}{}
\DeclareFontShape{OT1}{pzc}{m}{it}{<-> s * [1.10] pzcmi7t}{}
\DeclareMathAlphabet{\mathpzc}{OT1}{pzc}{m}{it}
\DeclareFontFamily{OT1}{rsfs}{}
\DeclareFontShape{OT1}{rsfs}{n}{it}{<->rsfs10}{}
\DeclareMathAlphabet{\curly}{OT1}{rsfs}{n}{it}
\theoremstyle{plain}
\newtheorem{theorem}{Theorem}[section]
\newtheorem*{theorem*}{Theorem}
\newtheorem{corollary}[theorem]{Corollary}
\newtheorem{lemma}[theorem]{Lemma}
\newtheorem{proposition}[theorem]{Proposition}
\theoremstyle{definition}
\newtheorem{definition}[theorem]{Definition}
\theoremstyle{remark}
\newtheorem{example}[theorem]{Example}
\newtheorem{remark}[theorem]{Remark}
\newtheorem*{claim*}{Claim}
\numberwithin{equation}{section}
\renewcommand{\le}{\leqslant}
\renewcommand{\ge}{\geqslant}
\renewcommand{\setminus}{\smallsetminus}
\newcommand{\R}{\mathbb{R}}
\newcommand{\Z}{\mathbb{Z}}
\newcommand{\C}{\mathbb{C}}
\DeclareMathOperator*{\heightr}{ht}
\newcommand{\gen}[1]{\left< #1 \right>}
\newcommand{\PGL}{\mathrm{PGL}}
\newcommand{\PSL}{\mathrm{PSL}}
\DeclareMathOperator{\ord}{ord}
\DeclareMathOperator{\Spec}{Spec}
\DeclareMathOperator{\ad}{ad}
\DeclareMathOperator{\Ad}{Ad}
\DeclareMathOperator{\rank}{rank}
\DeclareMathOperator{\Hom}{Hom}
\DeclareMathOperator{\Id}{Id}
\DeclareMathOperator{\Sym}{Sym}
\DeclareMathOperator{\Aut}{Aut}
\DeclareMathOperator{\conj}{Conj}
\DeclareMathOperator*{\GL}{GL}
\DeclareMathOperator*{\SL}{SL}
\DeclareMathOperator*{\SO}{SO}
\DeclareMathOperator*{\Sp}{Sp}
\newcommand{\Gr}{\operatorname{Gr}}
\renewcommand{\phi}{\varphi}
\newcommand{\git}{\mathbin{/\mkern-6mu/}}
\newcommand{\liep}{\mathfrak{p}}
\newcommand{\liet}{\mathfrak{t}}
\newcommand{\lieg}{\mathfrak{g}}
\newcommand{\liek}{\mathfrak{k}}
\newcommand{\liel}{\mathfrak{l}}
\newcommand{\liesl}{\mathfrak{sl}}
\renewcommand{\phi}{\varphi}
\tikzset{curve/.style={settings={#1},to path={(\tikztostart)
			.. controls ($(\tikztostart)!\pv{pos}!(\tikztotarget)!\pv{height}!270:(\tikztotarget)$)
			and ($(\tikztostart)!1-\pv{pos}!(\tikztotarget)!\pv{height}!270:(\tikztotarget)$)
			.. (\tikztotarget)\tikztonodes}},
	settings/.code={\tikzset{quiver/.cd,#1}
		\def\pv##1{\pgfkeysvalueof{/tikz/quiver/##1}}},
	quiver/.cd,pos/.initial=0.35,height/.initial=0}
\tikzset{tail reversed/.code={\pgfsetarrowsstart{tikzcd to}}}
\tikzset{2tail/.code={\pgfsetarrowsstart{Implies[reversed]}}}
\tikzset{2tail reversed/.code={\pgfsetarrowsstart{Implies}}}
\tikzset{no body/.style={/tikz/dash pattern=on 0 off 1mm}}
\begin{document}
	
	\title[Very stable regular $G$-Higgs bundles]{Very stable regular $G$-Higgs bundles}

	\author[Miguel González]{Miguel González}
	\address{Instituto de Ciencias Matem\'aticas \\
		CSIC-UAM-UC3M-UCM \\ Nicol\'as Cabrera, 13--15 \\ 28049 Madrid \\ Spain}
	\email{miguel.gonzalez@icmat.es}
	
	\noindent
	\thanks{
		\noindent
		The project that gave rise to these results received the support of a fellowship from ``la Caixa'' Foundation (ID 100010434). The fellowship code is LCF/BQ/DR23/12000030. The work was also partially supported by Austrian Science Fund (FWF) grant ``Geometry of the tip of the global nilpotent cone'' no. 10.55776/P35847 and by Spanish grant CEX2023-001347-S funded by MICIU/AEI/10.13039/501100011033.
	}

	\subjclass[2020]{Primary 14H60; Secondary 14H70, 14D20}
	
	\begin{abstract}
	We give a classification of very stable $G$-Higgs bundles in the generically regular Higgs field case for $G$ an arbitrary connected semisimple complex group. This extends the classification for $G=\GL_n(\C)$ and fixed point type $(1,1,\dots,1)$ given by Hausel and Hitchin.
	\end{abstract}
	
	\maketitle
	
	\section{Introduction}
	
	Given a smooth projective complex curve $C$ of genus $g \ge 2$ with canonical bundle $K_C$ and a semisimple complex Lie group $G$ with Lie algebra $\lieg$, a $G$-Higgs bundle is a pair $(E,\varphi)$ consisting of a principal $G$-bundle $E$ over $C$ together with a section $\varphi$ of the vector bundle $E(\lieg) \otimes K_C$. $G$-Higgs bundles were introduced by Hitchin \cite{hitchin_self-duality_1987, hitchin_stable_1987} motivated by the study of the Yang--Mills self-duality equations and their dimensional reduction to a Riemann surface.
	
	After introducing appropriate stability notions, moduli spaces $\mathcal M(G)$ of isomorphism classes of polystable $G$-Higgs bundles can be constructed \cite{hitchin_self-duality_1987,nitsure_moduli_1991}. They have been intensely studied in the literature due to their many interesting geometric properties. For example, they are homeomorphic to the character varieties $\Hom(\pi_1(C),G)\git G$ and their smooth locus carries a hyperkähler structure. Furthermore, there is a completely integrable system \cite{hitchin_stable_1987} with respect to one of its complex symplectic structures
	$$h_G : \mathcal M(G) \to \mathcal A_G$$
	\noindent onto an affine space $\mathcal A_G$, known as the \textit{Hitchin system}. 
	
	These moduli spaces also play an important role in relation to Langlands duality and mirror symmetry. Indeed, denoting by $G^\vee$ the Langlands dual group of $G$, there is an identification $\mathcal A_G \simeq \mathcal A_{G^\vee}$ via which $h_G$ and $h_{G^\vee}$ become dual Lagrangian fibrations \cite{donagi_langlands_2012}. A phenomenon predicted by mirror symmetry \cite{kapustin_electric-magnetic_2007} is that complex Lagrangian subvarieties of $\mathcal M(G)$, sometimes referred to as \textit{BAA-branes}, should correspond via this duality to hyperholomorphic vector bundles over hyperholomorphic subvarieties of $\mathcal M(G^\vee)$, or \textit{BBB-branes}.       
	
	Motivated by this, the notion of \textit{very stable Higgs bundle} was introduced by Hausel and Hitchin in \cite{hausel_very_2022}, extending the notion of a very stable vector bundle from \cite{laumon_analogue_1988}. This can be defined in the context of the natural $\C^\times$-action on $\mathcal M(G)$ given by
	$$(E,\varphi) \mapsto (E,\lambda \varphi).$$
	A very stable Higgs bundle is a smooth fixed point $\mathcal E \in \mathcal M(G)^{s\C^\times}$ such that its \textit{upward flow}
	$$W^+_{\mathcal E} := \{(E,\varphi) \in \mathcal M(G) : \lim\limits_{\lambda \to 0}(E,\lambda \varphi) = \mathcal E\}$$
	\noindent does not contain any nilpotent Higgs bundle other than $\mathcal E$. Upward flows are in general Lagrangian subvarieties of $\mathcal M(G)$. For very stable fixed points they are closed and $h_G$ restricts to a proper map on them, rendering the study of their mirror symmetry aspects (e.g. \cite[Section 6]{hausel_very_2022}) more manageable.
	
	Very stable Higgs bundles have been studied in the literature in the past, mainly for $G=\GL_n(\C)$. In order to explain this, recall (\cite[Section 7]{hitchin_self-duality_1987}, \cite[Section 4]{simpson_higgs_1992}, \cite[Lemma 9.2]{hausel_mirror_2003}, \cite[Section 3.1]{hausel_very_2022} and \cite{garcia-prada_y_2013, garcia-prada_motives_2014}) that in this case a fixed point of the $\C^\times$-action can be described as a tuple of $k$ vector bundles $E_0,\dots,E_{k-1}$ over $C$ with ranks adding to $n$, and $k-1$ maps $\varphi_i : E_{i} \to E_{i+1} \otimes K_C$. The original work of Laumon \cite{labourie_cyclic_2017} on very stable vector bundles corresponds to the case $k=1$, where it is shown that the very stable locus is an open dense subset of the fixed point component. In this same case, Pauly and Peón-Nieto \cite{pauly_very_2019} related for the first time very stable bundles with the properness of the Hitchin map on their upward flows. Later, Hausel and Hitchin \cite{hausel_very_2022} treated the case of $k=n$, giving a complete classification of the very stable points in terms of the reducedness of a divisor associated to the maps $\varphi_{i}$ of line bundles. In this same setting, \cite{gonzalez_even_2024} gives the analogous classification when restricted to the fixed point locus $\mathcal M(\GL_n(\C))^{-1}$, a subvariety of interest due to its connection with character varieties for real Lie groups \cite{garcia-prada_involutions_2019}. More recently, the case of $k=2$ has been studied by Peón-Nieto in \cite{peon-nieto_wobbly_2023}, where it is shown that most fixed point components do not have very stable Higgs bundles at all. There has also been work for arbitrary $G$ \cite{zelaci_very_2018} in the case $\varphi = 0$, which results in the natural generalisation of the results of Laumon, Pauly and Peón-Nieto to the principal bundle setting.   
	
	The main goal of this paper is to extend the classification of \cite{hausel_very_2022} in the case of $G=\GL_n(\C)$ and $k=n$ (also referred to as \textit{type} $(1,1,\dots,1)$ due to the vector bundles $E_j$ having rank one) to an arbitrary connected semisimple complex Lie group $G$. The corresponding fixed points can be characterised by the fact that the Higgs field $\varphi$ takes values generically in the orbit of regular nilpotent elements in $\lieg$. 
	
	More precisely, fixed points of the $\C^\times$-action can be classified by considering $\Z$-gradings of the Lie algebra, that is, vector space direct sum decompositions
	$$\lieg = \bigoplus_{j \in \Z}\lieg_j$$
	\noindent such that $[\lieg_i,\lieg_j] \subseteq \lieg_{i+j}$. In this situation there exists an element $\zeta \in \lieg_0$ such that $\lieg_j = \ker(\ad(\zeta)-j\Id)$. Fixed points of the $\C^\times$-action then correspond \cite[Section 4.2]{biquard_arakelov-milnor_2021} (see also \cite{simpson_constructing_1988}) to $G$-Higgs bundles $(E,\varphi)$ with a reduction of structure group $E'$ of $E$ from $G$ to the centraliser $C_G(\zeta)$ and such that $\varphi$ belongs to $E'(\lieg_i) \otimes K_C$ for some $i \neq 0$. Our case of interest is then that of $i=1$ and $\zeta = \sum_{j=1}^r\omega_j^\vee$ for $r=\rank \lieg$ and a choice of maximal torus $T \subseteq G$ and simple roots $\{\alpha_1,\dots,\alpha_r\}$ with their corresponding fundamental coweights $\{\omega_1^\vee,\dots,\omega_r^\vee\}$. We also say that these fixed points are of \textit{Borel type}, due to the fact that the associated parabolic subalgebra $\bigoplus_{j \ge 0} \lieg_j$ is a Borel subalgebra and $C_G(\zeta)=T$ is the maximal torus which is a Levi subgroup for the Borel subgroup corresponding to the given choice of fundamental coweights.
	
	Already in \cite[Section 8.1]{hausel_very_2022} some remarks about this situation were outlined. In particular, one direction of the classification of the very stable $G$-Higgs bundles of Borel type was conjectured \cite[Conjecture 8.2]{hausel_very_2022}. The classification given in Theorem \ref{verystablecharact} of this paper agrees with that conjecture and also proves the converse statement.
	
	In order to understand the classification, note that in the $\Z$-grading for Borel type we have
	$$\lieg_1 = \bigoplus_{i=1}^r\lieg_{\alpha_i}$$
	\noindent and the structure group $C_G(\zeta) = T$ acts on each root space $\lieg_{\alpha_i}$. Therefore, given a fixed point $(E,\varphi) \in \mathcal M(G)^{\C^\times}$ of Borel type and a point $c \in C$ in the base curve, we can look at the order $a_i^c \in \Z_{\ge 0}$ of vanishing at $c$ of the Higgs field along the line subbundle $E'(\lieg_{\alpha_i})\otimes K_C$. This gives a divisor valued in the space of dominant coweights of $\lieg$:
	$$\mu_{(E,\varphi)} := \sum_{c \in C}\sum_{i=1}^ra_i^c\omega_i^\vee c.$$
	There is a natural partial order in the space of dominant coweights, where $\lambda \le \mu$ if and only if $\mu - \lambda$ can be expressed as a sum of positive coroots. Minimal elements with respect to that order are called \textit{minuscule}. The classification result can then be stated in these terms as follows. 
	\begin{theorem*}[Theorem \ref{verystablecharact}]
		Let $(E,\varphi)$ be a smooth $\C^\times$-fixed point of Borel type with associated multiplicity divisor $\mu_{(E,\varphi)}$. Then $(E,\varphi)$ is very stable if and only if all the coefficients in $\mu_{(E,\varphi)}$ are minuscule.
	\end{theorem*}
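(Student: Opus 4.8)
By the definition recalled above, $\mathcal E$ is very stable exactly when its upward flow contains no nilpotent Higgs bundle other than $\mathcal E$, that is when $W^+_{\mathcal E}\cap h_G^{-1}(0) = \{\mathcal E\}$; so the whole problem is to determine the nilpotent Higgs bundles lying in $W^+_{\mathcal E}$. The first step is to give an explicit model of the upward flow. Since $\mathcal E$ is of Borel type its bundle reduces to $T$, producing line bundles $L_\alpha = E'(\lieg_\alpha)$ and a Higgs field $\varphi = \sum_{i=1}^r\varphi_i$ with $\varphi_i\in H^0(C,L_{\alpha_i}\otimes K_C)$ nonzero, so that $\varphi$ is generically the principal nilpotent $\sum_i e_{\alpha_i}$ and $a_i^c = \ord_c\varphi_i$. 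Computing the $\C^\times$-weights on the deformation complex $\HH^1([E(\lieg)\xrightarrow{\ad\varphi}E(\lieg)\otimes K_C])$ through the grading gauge $\lambda^{-\zeta}$, one finds that a bundle deformation in $\lieg_j$ carries weight $-j$ and a Higgs deformation in $\lieg_j$ carries weight $1-j$; hence the attracting (positive-weight) directions are bundle deformations in $\lieg_{<0}$ together with Higgs deformations in $\bigoplus_{j\le 0}\lieg_j$. I would use this to identify a point of $W^+_{\mathcal E}$ with a Higgs bundle $(\tilde E,\tilde\varphi)$ carrying a Borel reduction whose associated graded is $\mathcal E$, and whose Higgs field has the shape $\tilde\varphi = \varphi + \psi$ with $\psi$ valued in $\bigoplus_{j\le 0}\lieg_j$.

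With this model the nilpotency condition $h_G(\tilde E,\tilde\varphi)=0$ becomes the vanishing of all the generating $G$-invariant polynomials $p_1,\dots,p_r$ (whose degrees are the exponents of $\lieg$ increased by one) on $\tilde\varphi = \varphi+\psi$. Away from the support of $\mu_{(E,\varphi)}$ the field $\varphi$ is regular nilpotent, so $\tilde\varphi$ lies in the principal Kostant slice and the equations $p_j(\tilde\varphi)=0$ force $\psi$ to vanish there; the nilpotency equations are thus supported at the finitely many points $c$ of the multiplicity divisor. The next step is a local analysis at each such $c$ in a coordinate $z$, where the $\lieg_1$-part of $\tilde\varphi$ has leading term $\sum_i z^{a_i^c}e_{\alpha_i}$; using the Jacobson--Morozov $\liesl_2$-theory of the principal nilpotent and its induced filtration of $\lieg$, I would read off the precise orders of vanishing that $p_j(\tilde\varphi)=0$ impose on the components of $\psi$ along each negative root space.

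The crux is to match these local orders with the minuscule condition. For a positive root $\alpha=\sum_i n_i\alpha_i$ one has $\gen{\alpha,\lambda_c} = \sum_i n_i a_i^c$ with $\lambda_c = \sum_i a_i^c\omega_i^\vee$, and $\lambda_c$ is minuscule precisely when $\gen{\alpha,\lambda_c}\le 1$ for every positive root $\alpha$, the binding constraint being the highest root $\theta$. I expect to prove that the component of $\psi$ along $\lieg_{-\alpha}$ can be switched on without spoiling nilpotency exactly when $\gen{\alpha,\lambda_c}\ge 2$: in that case the surplus vanishing feeds an $\liesl_2$-triple attached to $\alpha$ and, after globalising the local modification, yields a genuine nonzero nilpotent Higgs bundle in $W^+_{\mathcal E}$, so that $\mathcal E$ is wobbly; conversely, when $\gen{\alpha,\lambda_c}\le 1$ for all $\alpha$ and all $c$, the equations leave no room and force $\psi\equiv 0$, whence $\mathcal E$ is the unique nilpotent in its upward flow. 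This simultaneously recovers the reducedness of $\sum_i\mathrm{div}(\varphi_i)$ present for $G=\GL_n(\C)$, where every node is minuscule, and isolates the new phenomenon for general $G$: a non-minuscule node produces a nilpotent even for a reduced, multiplicity-one vanishing.

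The step I expect to be hardest is the ``minuscule implies very stable'' direction, the converse to the Hausel--Hitchin conjecture, because the vanishing-order count is local while the equations $p_j(\tilde\varphi)=0$ couple the points of $C$ through the cohomology of the deformation complex, and because the higher graded pieces $\lieg_j$ with $j\ge 2$ (invisible in the $\GL_n(\C)$ chain) contribute to $\tilde\varphi$ and must be controlled. I would combine the pointwise count with the properness of $h_G$ on $W^+_{\mathcal E}$ and a dimension argument to conclude that the minuscule conditions cut out exactly $\{\mathcal E\}$; making the $\liesl_2$-bookkeeping for the principal nilpotent interact correctly with the invariant-polynomial degrees is where the representation theory of $G$ enters decisively.
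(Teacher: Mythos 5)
Your model of the upward flow is correct and agrees with the paper's Proposition \ref{upwardflow} (bundle deformations in $\lieg_{<0}$, Higgs field deformations in $\lieg_{\le 0}$), but the analysis you build on it contains a fatal gap. The claim that away from the support of $\mu_{(E,\varphi)}$ the equations $p_j(\tilde\varphi)=0$ force $\psi$ to vanish is false: $\tilde\varphi=\varphi+\psi$ with $\psi$ valued in $\bigoplus_{j\le 0}\lieg_j$ does \emph{not} lie in the Kostant slice $e+C_{\lieg}(f)$, which is the $r$-dimensional slice where that rigidity holds; the space $\lieg_{\le 0}$ is the full opposite Borel, and the nilpotent cone meets $e+\lieg_{\le 0}$ in a positive-dimensional variety even when $e$ is regular nilpotent. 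Already in $\liesl_2$ one has $e+ah-a^2f=\exp(-a\,\ad f)(e)$, nilpotent for every $a$. Pointwise, nilpotency only forces $\tilde\varphi(p)$ into the orbit of $\varphi(p)$ under the opposite unipotent group; whether these pointwise conjugations glue to a global isomorphism with $(E,\varphi)$ --- i.e.\ whether a nilpotent point of $W^+_{\mathcal E}$ actually equals $\mathcal E$ --- is precisely the global question the theorem asks, and your argument collapses it to a pointwise one, assuming what must be proved. Your fallback for the hard direction is also circular: properness of $h_G$ restricted to $W^+_{\mathcal E}$ is equivalent to closedness of the upward flow, i.e.\ to very stability itself (Proposition \ref{verystableclosed}), so it cannot be used as an input.

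The paper avoids both problems by never analysing the nilpotency equations directly, working instead with Hecke transformations in the affine Grassmannian. For the wobbly direction, non-minusculeness supplies (after a case-by-case check over Dynkin types, Corollary \ref{roothecke}) a coroot $\alpha^\vee$ with $\mu_c-\alpha^\vee$ dominant for which the affine Springer fibre at $c$ contains the explicit curve $\sigma_{\alpha^\vee,\alpha}$ of Proposition \ref{heckecurve}; its $\C^\times$-orbit is a curve of stable nilpotent Higgs bundles flowing down to $(E,\varphi)$ and up to a different fixed point, exhibiting wobbliness. Your heuristic of ``switching on $\psi$ along $\lieg_{-\alpha}$ when $\langle\alpha,\lambda_c\rangle\ge 2$'' is in the right spirit, but producing an actual point of $\mathcal M(G)$ from the local modification is exactly what the Hecke curve accomplishes, and one also needs the vanishing-order inequalities of Lemma \ref{lemmahecke}, not just $\langle\alpha,\lambda_c\rangle\ge 2$. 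For the very stable direction, the paper inducts on the total multiplicity $N$: any nilpotent $(E',\varphi')$ in the upward flow is Hecke-transformed at $c$ by $z^{\omega_i^\vee}$ into a nilpotent point in the upward flow of a fixed point with smaller $N$ (stable and simple by Corollary \ref{stabilityweight} and Proposition \ref{simplicity}), hence equal to it by induction; undoing the transformation then identifies $(E',\varphi')$ with $(E,\varphi)$ because Hecke transformations of minuscule type at a regular nilpotent value are unique --- the partial Springer fibre over a regular nilpotent is a single point (Remark \ref{minusculekostantsection}). This uniqueness statement is the ingredient that replaces your false pointwise rigidity claim, and it is entirely absent from your outline.
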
 
	The main tool involved in this classification is that of \textit{Hecke transformations}. After choosing a principal $G$-bundle $E$, a point $c \in C$ and a suitable trivialisation, each point of the affine Grassmannian $\sigma \in \Gr_G = G((z))/G[[z]]$ gives a new principal $G$-bundle $\mathcal H_\sigma(E)$ which is isomorphic to $E$ over $C \setminus \{c\}$. This is explained and studied thoroughly in \cite{wong_hecke_2013}. The technique can be naturally extended in the presence of a Higgs field $\varphi$, where now only points in a certain subspace of $\Gr_G$ can be used. This subspace is an \textit{affine Springer fibre} over the element of $\lieg[[z]]$ induced by $\varphi$ locally around $c$.   
	
	The classification is carried out by starting with an everywhere regular fixed point (the nilpotent Higgs bundle in a Hitchin section as in \cite{hitchin_lie-groups_1992}), which is readily seen to be very stable, and relating it to the other fixed points of Borel type and their upward flows by means of Hecke transformations. This is analogous to the approach taken in \cite{hausel_very_2022} for $G=\GL_n(\C)$ with the difference that, in that case, working with vector bundles simplifies the Hecke transformations and only usual partial Springer fibres in Grassmannians $\Gr(k,n)$ have to be considered. 
		
	Even though the techniques work independently of the group $G$, there are interesting differences depending on $G$ that can be deduced. For example, the case of $G=\PGL_n(\C)$ has the special feature that every fundamental coweight $\omega_i^\vee$ is minuscule. This implies that every fixed point component of Borel type has (an open, dense subset of) very stable fixed points, as was proven in \cite[Corollary 4.19]{hausel_very_2022}. However, for other types this is not the case. For example, as was remarked in \cite[Section 8.1]{hausel_very_2022}, the case of $G=G_2$ does not have minuscule coweights other than $0$. This, together with our classification, implies that there are no very stable $G_2$-Higgs bundles of Borel type other than the everywhere regular ones. A similar analysis can be carried out for other types, which outside of Dynkin type $A$ results in Borel type fixed point components without any very stable Higgs bundles.
	
	The paper is structured as follows. In Section \ref{secbb} we recall the theory of Bia\l ynicki-Birula about $\C^\times$-actions on semiprojective varieties that will be used in the setting of the natural $\C^\times$-action on $\mathcal M(G)$. In Section \ref{sechiggs} we provide the necessary aspects about the moduli space of $G$-Higgs bundles, including the notions of stability and the $\C^\times$-action, and we describe the elements of the Bia\l ynicki-Birula theory (fixed points, upward and downward flows) in that setting. Section \ref{sechecke} focuses on introducing the affine Grassmannian and showing how it can be used to perform Hecke transformations of principal $G$-bundles and $G$-Higgs bundles. In Section \ref{sectheorem} we use Hecke transformations and the geometry of the affine Grassmannian to prove the main result in Theorem \ref{verystablecharact}. We also deduce some consequences of the theorem such as the statement in terms of vector bundles when $G$ is classical. Finally, in Section \ref{secmult} we show how the virtual equivariant multiplicity introduced in \cite[Section 5]{hausel_very_2022} can be computed in our framework, and we show that it agrees with the Dynkin polynomial in the very stable case, a computation that was already outlined in \cite[Section 8.1]{hausel_very_2022}.
	
	\textbf{Acknowledgements.} I would like to thank Oscar García-Prada and Tamás Hausel for their constant guidance and support and for introducing me to the mathematics appearing in this work. I would also like to thank Mischa Elkner, Guillermo Gallego, Nigel Hitchin and Ana Peón-Nieto for helpful discussions.   
	
	\section{Bia\l ynicki-Birula theory}\label{secbb}
	
	In this section we recall \cite{bialynicki-birula_theorems_1973} (see also \cite[Section 2]{hausel_very_2022}) the main concepts about the Bia\l ynicki-Birula theory of $\C^\times$-actions on varieties that will later play a role in the moduli space of $G$-Higgs bundles. The setting is that of \textit{semiprojective varieties}.
	
	\begin{definition}
		A normal complex quasiprojective variety $X$ equipped with a $\C^\times$-action is \textbf{semiprojective} if the fixed point locus $X^{\C^\times}$ is projective and for every $x \in X$ the limit $\lim\limits_{\lambda \to 0}\lambda x$ exists.
	\end{definition}
	
	We obtain a decomposition of such varieties by using the limit points of the $\C^\times$-action.
	
	\begin{definition}
		Let $X$ be a semiprojective variety and $\alpha \in X^{\C^\times}$. The \textbf{upward flow from $\alpha$} is
		$$W_{\alpha}^+ := \{x \in X : \lim\limits_{\lambda \to 0} \lambda x = \alpha\} \subseteq X.$$
		The upward flows define the \textbf{Bia\l ynicki-Birula partition} \[X = \bigsqcup_{\alpha \in X^{\C^\times}}W_{\alpha}^+.\]
		
		Similarly, the \textbf{downward flow from $\alpha$} is
		$$W_{\alpha}^- := \{x \in X : \lim\limits_{\lambda \to \infty} \lambda x = \alpha\} \subseteq X,$$
		and the \textbf{core} of $X$ is defined to be
		\[\mathcal C := \bigsqcup_{\alpha \in X^{\C^\times}}W_{\alpha}^- \subseteq X.\]  
	\end{definition}
	
	Given a smooth fixed point $\alpha \in X^{s\C^\times}$, its upward and downward flows can be described by studying the tangent space $T_\alpha X$. Since $\alpha$ is fixed, $T_\alpha X$ has an induced $\C^\times$-action which provides a weight space decomposition $T_\alpha X = \bigoplus_{k \in Z}(T_\alpha X)_k$. Let $T_\alpha X^+ := \bigoplus_{k>0}(T_\alpha X)_k$ be the subspace of positive weights and $T_\alpha X^- := \bigoplus_{k<0}(T_\alpha X)_k$ the subspace of negative weights. 
	
	\begin{proposition}[{\cite{bialynicki-birula_theorems_1973}, \cite[Proposition 2.1]{hausel_very_2022}}]\label{bbflows}
		Let $X$ be a semiprojective variety and $\alpha \in X^{s\C^\times}$ a smooth fixed point. The upward flow $W_\alpha^+$ (resp. the downward flow $W_\alpha^-$) is a locally closed $\C^\times$-invariant subvariety of $X$ isomorphic to $T_\alpha^+X$ (resp. $T_\alpha^-X$) as $\C^\times$-varieties.
	\end{proposition}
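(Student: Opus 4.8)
The statement is the classical Bia\l ynicki-Birula theorem, so the plan is to reduce the global assertion to a local linearization at $\alpha$ and then to spread out the local picture using the contracting nature of the $\C^\times$-action. I would treat the upward flow $W_\alpha^+$ in detail; the downward flow $W_\alpha^-$ is identical after replacing the action of $\lambda$ by that of $\lambda^{-1}$, which interchanges positive and negative weight spaces and converts $\lim_{\lambda\to\infty}$ into $\lim_{\lambda\to 0}$.

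First I would establish the local model at $\alpha$. Since $\alpha$ is a smooth point fixed by $\C^\times$, the torus acts linearly on $T_\alpha X$, giving the weight decomposition $T_\alpha X = \bigoplus_k (T_\alpha X)_k$ recalled above. Because $\C^\times$ is linearly reductive, its action on the completed local ring $\hat{\mathcal O}_{X,\alpha}$ splits compatibly with the grading: one chooses a $\C^\times$-eigenbasis of the cotangent space $\liem/\liem^2$ and lifts it to equivariant formal (equivalently, \'etale-local or analytic) coordinates, so that a neighborhood of $\alpha$ becomes $\C^\times$-equivariantly isomorphic to a neighborhood of the origin in $T_\alpha X$ endowed with its linear action. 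This equivariant linearization is the heart of the argument, and it is where I expect the main technical work to lie.

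Second, I would read off the upward flow inside this linear model. For a vector $v = \sum_k v_k$ with the linear action $\lambda\cdot v = \sum_k \lambda^k v_k$, the limit $\lim_{\lambda\to 0}\lambda\cdot v$ exists precisely when $v_k = 0$ for all $k<0$, in which case it equals the zero-weight part $v_0$; hence it equals $\alpha$ (the origin) exactly when in addition $v_0 = 0$, i.e.\ when $v \in T_\alpha^+ X$. Consequently, near $\alpha$ the set $W_\alpha^+$ coincides with a neighborhood of $0$ in $T_\alpha^+ X$. This already shows that $W_\alpha^+$ is smooth at $\alpha$, locally closed there, with tangent space $T_\alpha^+ X$ and the correct $\C^\times$-weights.

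Third, I would globalize by equivariance. The linear action on $T_\alpha^+ X$ is contracting as $\lambda\to 0$, so every point of $T_\alpha^+ X$ can be moved into the chosen neighborhood of the origin by applying a suitable $\lambda$; by the very definition of the upward flow, the same is true in $X$ for points of $W_\alpha^+$. Using that the local isomorphism $\phi_{\mathrm{loc}}$ is $\C^\times$-equivariant, I extend it to a global map by setting $\phi(v) = \lambda^{-1}\cdot \phi_{\mathrm{loc}}(\lambda\cdot v)$ for $\lambda$ small enough that $\lambda\cdot v$ lies in the neighborhood, where equivariance guarantees independence of the choice of $\lambda$ and that the local pieces glue to a single $\C^\times$-equivariant isomorphism $T_\alpha^+ X \xrightarrow{\sim} W_\alpha^+$; the local closedness of $W_\alpha^+$ in $X$ follows in the same way. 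Equivalently, one may invoke the general principle that a smooth variety carrying a $\C^\times$-action with a unique fixed point to which everything contracts is equivariantly isomorphic to its tangent space at that point, applied to $W_\alpha^+$. The main obstacle, as noted, is the equivariant local linearization: once the completed local ring at the smooth fixed point is split compatibly with the torus weights, both the identification of $W_\alpha^+$ with $T_\alpha^+ X$ and the spreading-out argument are formal consequences of the contracting linear action.
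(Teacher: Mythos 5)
This proposition is quoted background in the paper: it is stated with citations to Bia\l ynicki-Birula's original article and to \cite[Proposition 2.1]{hausel_very_2022}, and no proof is given, so there is no in-paper argument to compare against; your attempt has to be judged on its own. Your strategy (equivariant linearization at the smooth fixed point, then spreading out by the contracting flow) is the classical heuristic, but as written it has genuine gaps. First, the linearization itself: linear reductivity gives a linearization of the \emph{formal} neighbourhood essentially for free, but your argument needs an honest \'etale or analytic local model (Luna slice theorem, or Bochner for $S^1$ plus the identity principle to upgrade to $\C^\times$), and you explicitly defer exactly this step, which you yourself call the heart of the argument. Second, and more seriously, the local model is only a (non-invariant) neighbourhood $U$ of the origin, on which the $\C^\times$-action is merely partially defined. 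Your Step 2 identification of $W_\alpha^+$ near $\alpha$ with $T_\alpha^+X\cap U$, and your Step 3 gluing $\phi(v)=\lambda^{-1}\cdot\phi_{\mathrm{loc}}(\lambda\cdot v)$, both require controlling trajectories that leave $U$ and possibly re-enter it; with care (choosing $U$ as a polydisc in eigencoordinates and using the identity principle along orbits) one can extract an injective equivariant \emph{holomorphic} map $T_\alpha^+X\to X$ whose image is $W_\alpha^+$, but this is strictly weaker than the statement: it does not show that $W_\alpha^+$ is \emph{locally closed} in $X$ (one must rule out $W_\alpha^+$ accumulating on itself, since it is an increasing union of the translates $\lambda^{-n}\cdot\phi_{\mathrm{loc}}(T_\alpha^+X\cap U)$), nor that the identification is algebraic rather than merely analytic. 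These points are the actual content of the theorem, not formalities.

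Finally, the ``general principle'' you invoke at the end --- that a smooth variety with a contracting $\C^\times$-action and unique fixed point is equivariantly isomorphic to its tangent space --- is true for affine varieties (a graded Nakayama argument), but applying it to $W_\alpha^+$ is circular: it presupposes that $W_\alpha^+$ is already known to be an algebraic variety with a well-defined structure, which is what is being proved. A standard way to close all of these gaps at once is to replace the non-invariant local chart by Sumihiro's theorem: a normal quasiprojective $\C^\times$-variety admits a $\C^\times$-\emph{invariant} affine open neighbourhood $V$ of $\alpha$. Invariance of $V$ forces $W_\alpha^+\subseteq V$ (if $\lambda x\in V$ for small $\lambda$ then $x\in V$), reducing everything to the affine case, where the attracting locus is the closed subscheme cut out by the ideal generated by negative-weight functions, the limit map to the fixed locus is a morphism, and $W_\alpha^+$ is a closed fibre of it; local closedness in $X$ and the graded-algebra identification with $T_\alpha^+X$ then follow. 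Your outline captures the right picture, but without an argument of this kind (or the functorial approach of Bia\l ynicki-Birula/Drinfeld) it does not yet constitute a proof.
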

	
	We now recall the definition \cite[Definition 2.12]{hausel_very_2022} of a very stable point in a semiprojective variety.
	
	\begin{definition}
		A smooth fixed point $\alpha \in X^{s\C^\times}$ is \textbf{very stable} if $W_\alpha^+ \cap \mathcal C = \{\alpha\}$.
	\end{definition}
	
	We will use the following equivalent characterisation given in \cite[Proposition 2.14]{hausel_very_2022}.
	
	\begin{proposition}\label{verystableclosed}
		The point $\alpha \in X^{s\C^\times}$ is very stable if and only if $W_\alpha^+ \subseteq X$ is closed.
	\end{proposition}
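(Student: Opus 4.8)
The plan is to restate very stability dynamically and then prove the two implications separately, using Proposition~\ref{bbflows} together with the standard fact \cite{bialynicki-birula_theorems_1973} that the core $\mathcal{C}$ of a semiprojective variety is projective, hence compact. By Proposition~\ref{bbflows}, $W_\alpha^+$ is $\C^\times$-equivariantly isomorphic to $V:=T_\alpha^+X$, a vector space carrying a linear $\C^\times$-action with strictly positive weights, under which $\alpha$ is the origin. As $X$ is semiprojective the upward limit always exists, so a point $x\in W_\alpha^+$ lies in $\mathcal{C}$ precisely when its downward limit $\lim_{\lambda\to\infty}\lambda x$ exists. Thus $\alpha$ is very stable if and only if no point of $W_\alpha^+\setminus\{\alpha\}$ admits a downward limit.

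Suppose first that $W_\alpha^+$ is closed, and take $x\in W_\alpha^+\cap\mathcal{C}$. Then $\beta:=\lim_{\lambda\to\infty}\lambda x$ exists and lies in $\overline{\C^\times x}\subseteq\overline{W_\alpha^+}=W_\alpha^+$; being $\C^\times$-fixed and contained in the flow, it equals $\alpha$, the unique fixed point of $W_\alpha^+$. Transported to $V$, the identity $\lim_{\lambda\to\infty}\lambda x=\alpha$ becomes $\lim_{\lambda\to\infty}\lambda v=0$, which for a linear action of strictly positive weights forces $v=0$. Hence $x=\alpha$ and $\alpha$ is very stable.

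For the converse I argue by contraposition, assuming $W_\alpha^+$ is not closed and producing a point of $W_\alpha^+\cap\mathcal{C}$ other than $\alpha$. The boundary $\partial W_\alpha^+:=\overline{W_\alpha^+}\setminus W_\alpha^+$ is then a nonempty closed $\C^\times$-invariant subset, and every one of its points flows, as $\lambda\to0$, to a $\C^\times$-fixed point lying in $\partial W_\alpha^+$ (it cannot be $\alpha$, the only fixed point of the flow). Since $X^{\C^\times}$ has finitely many components, I may choose such a fixed point $\gamma$ maximal among those meeting $\partial W_\alpha^+$ in the Bia\l ynicki-Birula order, where $\delta\preceq\delta'$ means $\delta\in\overline{W_{\delta'}^+}$. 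Choose $x_k\in W_\alpha^+$ with $x_k\to\gamma$, and linearise the action on a $\C^\times$-invariant affine neighbourhood of $\gamma$ (Sumihiro), splitting coordinates by weight. Tuning a sequence $\lambda_k\to0$ so that the strictly-negative-weight coordinates of $\lambda_k\cdot x_k$ converge to a nonzero vector while the remaining coordinates tend to those of $\gamma$, I obtain a limit $q:=\lim_k\lambda_k\cdot x_k$ lying in $W_\gamma^-\setminus\{\gamma\}$; by $\C^\times$-invariance each $\lambda_k\cdot x_k\in W_\alpha^+$, so $q\in\overline{W_\alpha^+}$. As $q\in W_\gamma^-$ its downward limit is $\gamma\neq\alpha$, whence $q\in\mathcal{C}\setminus\{\alpha\}$.

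It remains to check $q\in W_\alpha^+$ rather than in the boundary, and this is the step I expect to be the main obstacle. Were $q\in\partial W_\alpha^+$, its upward limit $\gamma'':=\lim_{\lambda\to0}\lambda q$ would be a fixed point of $\partial W_\alpha^+$ with $\gamma\prec\gamma''$ (the orbit of $q$ runs from $\gamma''$ down to $\gamma$, and $q\neq\gamma$ rules out equality), contradicting the maximality of $\gamma$. Hence $q\in W_\alpha^+\cap\mathcal{C}$ is the desired point, so $\alpha$ is not very stable and the contrapositive follows. The delicate points to secure are the local linearisation and the weight description of $W_\gamma^-$ at the possibly singular fixed point $\gamma$, for which I would invoke the Bia\l ynicki-Birula structure theory for normal $\C^\times$-varieties; the maximality choice of $\gamma$ is precisely what promotes the naturally produced boundary-adjacent core point to an honest interior one.
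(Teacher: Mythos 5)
Your forward implication is correct and standard: if $W^+_\alpha$ is closed, a point $x\in W^+_\alpha\cap\mathcal C$ has its $\lambda\to\infty$ limit inside $W^+_\alpha$, that limit must be $\alpha$ (the unique fixed point of $W^+_\alpha\simeq T^+_\alpha X$ by Proposition \ref{bbflows}), and a vector in a representation with only strictly positive weights has a $\lambda\to\infty$ limit only if it is zero. Note, however, that the paper does not prove this proposition: it quotes it as \cite[Proposition 2.14]{hausel_very_2022}. The argument there for the hard direction is quite different from yours: one observes that $Z:=\overline{W^+_\alpha}$ is a closed invariant subvariety, hence semiprojective; that $Z$ deformation retracts onto its compact core $Z\cap\mathcal C$, which is therefore connected because $Z$ is irreducible; and that very stability makes $\{\alpha\}$ both open in $Z\cap\mathcal C$ (since $W^+_\alpha$ is open in its closure) and closed, so $Z\cap\mathcal C=\{\alpha\}$, whence every point of $Z$ flows down to $\alpha$ and $Z=W^+_\alpha$. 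Your contrapositive, Morse-theoretic broken-flow-line argument is a genuinely different route.

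As written, though, it has a genuine gap exactly at the maximality step on which it leans. The relation $\delta\preceq\delta'$ defined by $\delta\in\overline{W^+_{\delta'}}$ is reflexive, but antisymmetry and transitivity are not automatic; finiteness of the fixed-point components produces a maximal element only for an acyclic relation. A cycle $\gamma_1\in\overline{W^+_{\gamma_2}}$, $\gamma_2\in\overline{W^+_{\gamma_1}}$ with $\gamma_1\neq\gamma_2$ inside $\partial W^+_\alpha$ would leave no maximal $\gamma$ to choose, and your final contradiction would have no force. Moreover, even granting a maximal fixed component $F\ni\gamma$, the contradiction requires $\gamma''$ to lie in a component other than $F$, which does not follow from $\gamma''\neq\gamma$. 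Both issues are cured by the same missing ingredient: an equivariant ample linearization (Sumihiro), whose fiberwise weight is a function on fixed components that strictly increases along the closure relation (equivalently, Bia\l ynicki-Birula's filtrability theorem, cf. \cite{bialynicki-birula_theorems_1973}); this is a theorem about normal quasi-projective $\C^\times$-varieties, not a consequence of finiteness, and your proof must invoke it explicitly. Two smaller holes in the specialization step also need filling: (i) the tuning of $\lambda_k$ presupposes that the strictly-negative-weight coordinates of $x_k$ are nonzero; this is true, but requires the observation that otherwise $\alpha=\lim_{\lambda\to 0}\lambda x_k$ would equal the zero-weight part of $x_k$, which converges to $\gamma$, forcing $\alpha=\gamma$; (ii) $\gamma''\neq\gamma$ is not a consequence of $q\neq\gamma$ alone (a priori a nontrivial orbit could have equal limits at $0$ and $\infty$); it holds because $q$ has a nonzero negative-weight coordinate, so $\lambda q$ leaves every bounded subset of the invariant affine chart as $\lambda\to 0$ and its limit cannot lie in that chart, while $\gamma$ does. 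With the filtrability input and these two repairs your argument goes through; without them, the key step is unsupported.
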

	
	Lastly, we consider semiprojective varieties with the additional structure of a symplectic form $\omega \in \Omega^2(X^s)$ of weight one, that is, such that $\Phi_{\lambda}^*\omega = \lambda\omega$ for $\lambda \in \C^\times$, where $\Phi_{\lambda}: X \to X$ is given by the $\C^\times$-action on $X$. In this case, the upward flows are Lagrangian subvarieties.
	
	\begin{proposition}[{\cite[Proposition 2.10]{hausel_very_2022}}]\label{bblagrangians}
		In the above setting, the upward flows $W^+_\alpha$ for $\alpha \in X^{s\C^\times}$ are Lagrangian subvarieties of $X$.
	\end{proposition}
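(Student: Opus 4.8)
The plan is to verify the two defining properties of a Lagrangian subvariety: that $W_\alpha^+$ has dimension $\tfrac12\dim X$, and that $\omega$ restricts to zero on its smooth locus. The entire argument is driven by the single algebraic consequence of the weight-one hypothesis at the fixed point.

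First I would analyse the symplectic pairing on $T_\alpha X$. Since $\alpha \in X^{s\C^\times}\subseteq X^s$ is a smooth point we have $\dim T_\alpha X = \dim X$, and $\omega_\alpha$ is a nondegenerate alternating form on $T_\alpha X$. Differentiating $\Phi_\lambda^*\omega = \lambda\omega$ at $\alpha$ gives $\omega_\alpha(\lambda^k v,\lambda^l w) = \lambda\,\omega_\alpha(v,w)$ for $v \in (T_\alpha X)_k$, $w \in (T_\alpha X)_l$. Hence $\omega_\alpha$ vanishes on $(T_\alpha X)_k \times (T_\alpha X)_l$ unless $k+l=1$, and by nondegeneracy it induces a perfect pairing between $(T_\alpha X)_k$ and $(T_\alpha X)_{1-k}$ for every $k$; in particular $\dim(T_\alpha X)_k = \dim(T_\alpha X)_{1-k}$. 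Two consequences follow. Since any two positive weights satisfy $k+l\ge 2 > 1$, the subspace $T_\alpha^+X = \bigoplus_{k>0}(T_\alpha X)_k$ is isotropic. And the identity $\dim(T_\alpha X)_k = \dim(T_\alpha X)_{1-k}$, under the bijection $k \mapsto 1-k$ between $\{k \ge 1\}$ and $\{k \le 0\}$, gives $\dim T_\alpha^+X = \dim(T_\alpha X)_0 + \dim T_\alpha^-X$; combined with $\dim X = \dim T_\alpha^+X + \dim(T_\alpha X)_0 + \dim T_\alpha^-X$ this yields $\dim T_\alpha^+X = \tfrac12\dim X$. So $T_\alpha^+X$ is a Lagrangian subspace of $(T_\alpha X,\omega_\alpha)$.

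By Proposition \ref{bbflows}, $W_\alpha^+ \cong T_\alpha^+X$ as $\C^\times$-varieties, so $W_\alpha^+$ is irreducible of dimension $\tfrac12\dim X$; it remains to show that $\omega$ vanishes on its smooth locus. Here I would exploit that $W_\alpha^+$ is an affine space on which $\C^\times$ acts linearly with strictly positive weights, contracting it to $\alpha$ as $\lambda \to 0$. The restriction $\beta := \omega|_{W_\alpha^+}$ is an algebraic $2$-form on the dense $\C^\times$-invariant open set $W_\alpha^+ \cap X^s$ (which contains $\alpha$) and still satisfies $\Phi_\lambda^*\beta = \lambda\beta$. Writing $\beta = \sum_{i<j} f_{ij}\,dv_i \wedge dv_j$ in weight-homogeneous coordinates $v_i$ of weights $w_i \ge 1$, the weight-one condition forces each regular function $f_{ij}$ to be weight-homogeneous of weight $1 - w_i - w_j \le -1$. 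Since every monomial in the $v_i$ has weight $\ge 0$, with weight $0$ only for constants, a regular function of negative weight must vanish near $\alpha$, hence identically on the irreducible set $W_\alpha^+ \cap X^s$. Thus $\beta = 0$ and $W_\alpha^+$ is isotropic.

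Combining the two facts, $W_\alpha^+$ is an irreducible isotropic subvariety of dimension $\tfrac12\dim X$, i.e.\ Lagrangian. The only genuinely delicate point is the vanishing of $\beta$ globally rather than merely at $\alpha$: one must ensure the coefficients $f_{ij}$ are honestly regular on $W_\alpha^+ \cap X^s$ and invoke irreducibility so that vanishing near $\alpha$ propagates. The possibility that $W_\alpha^+$ meets the singular locus $X \setminus X^s$ is harmless, since $\omega$ and hence $\beta$ are only required to vanish on the smooth locus and $W_\alpha^+ \cap X^s$ is already dense. A coordinate-free alternative for the vanishing is to note that the generating vector field $\xi$ of the action satisfies $\mathcal L_\xi\omega = \omega$, so $\omega = d(\iota_\xi\omega)$ is exact; evaluating along the contracting flow toward $\alpha$, where $\omega_\alpha|_{T_\alpha^+X}=0$ by the first step, yields the same conclusion.
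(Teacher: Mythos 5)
Your proof is correct: differentiating the weight-one condition at the fixed point gives the perfect pairing of $(T_\alpha X)_k$ with $(T_\alpha X)_{1-k}$, hence $T_\alpha^+X$ is Lagrangian, and the homogeneity argument on the contracting affine space $W_\alpha^+\cong T_\alpha^+X$ (coefficients of $\omega|_{W_\alpha^+\cap X^s}$ have negative weight, hence vanish) gives isotropy of the whole flow. The paper itself includes no proof of this statement --- it is quoted from \cite[Proposition 2.10]{hausel_very_2022} --- and your argument is essentially the one given in that reference, so it matches the intended proof.
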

	
	\section{Moduli space of $G$-Higgs bundles}\label{sechiggs}
	
	\subsection{Main definitions, stability and moduli spaces}
	
	Let $C$ be a smooth projective complex curve of genus $g \ge 2$ with canonical line bundle $K_C$. Let $G$ be a connected semisimple complex group with Lie algebra $\lieg$.
	
	\begin{definition}
		A \textbf{$G$-Higgs bundle} over $C$ is a pair $(E,\varphi)$ where $E$ is a principal $G$-bundle over $C$ and $\varphi \in H^0(C,E(\lieg)\otimes K_C)$.
	\end{definition}
	
	Here $E(\lieg) := E \times_{\Ad} \lieg$ is the adjoint bundle, that is, the vector bundle associated to the adjoint representation of $G$. We are interested in studying the \textit{moduli space} which parametrises isomorphism classes of polystable $G$-Higgs bundles. For this we need to recall the suitable notions of stability \cite{garcia-prada_hitchin-kobayashi_2012}. We start with the following definition:
	\begin{definition}
		Let $\hat{G} \le G$ be a subgroup, and $E$ a principal $G$-bundle. A \textbf{reduction of structure group} of $E$ to $\hat{G}$ is a section $\sigma \in H^0(C,E(G/\hat{G}))$, where $E(G/\hat{G}) = E \times_{G} G/\hat{G}$.
	\end{definition}
	
	Given such a section $\sigma : X \to E(G/\hat{G})$, it is possible to pull back the $\hat{G}$-bundle $E$ on $E(G/\hat{G})$ which results in $E_{\sigma} := \sigma^*E$, a $\hat{G}$-bundle on $X$. This motivates the term \textit{reduction}. There is a canonical isomorphism $E_{\sigma} \times_{\hat{G}} G \simeq E$ and the map $E_{\sigma} = \sigma^*E \to E$ induced by the pullback gives a subvariety $E_{\sigma} \subseteq E$. 
	
	Fix a maximal compact subgroup $K \le G$. Let $\mathfrak{k}$ be its Lie algebra, which is a real subalgebra of $\mathfrak g$. Define for $s \in i\mathfrak k$ the spaces
	$$\lieg_s^0 = \{X \in \lieg : \Ad(e^{ts})(X) = X,\, \forall t \in \R\}, \quad \lieg_s = \{X \in \lieg : \Ad(e^{ts})(X) \text{ is bounded as } t \to \infty\},$$ 
	\noindent and the subgroups
	$$L_s = \{g \in G : \Ad(g)(s)=s\}, \quad P_s = \{g \in G : e^{ts}ge^{-ts} \text{ is bounded as } t \to \infty\}.$$ 
	These subgroups of $G$ have $\mathfrak g_s^0$ and $\mathfrak g_s$, respectively, as Lie algebras. $P_s$ is parabolic and $L_s$ is a Levi factor for $P_s$. We also define the character $\chi_s : \mathfrak g_s \to \mathbb C$ given by $\chi_s(x) = B(s,x)$, where $B$ is the Killing form on $\mathfrak g$.
	
	Now, for a $G$-bundle $E$ and a reduction $\sigma \in H^0(C, E(G/P_s))$ of structure group to $P_s$, we define the \textbf{degree} of the reduction. If a multiple $q \chi_s$ for some $q \in \Z_{>0}$ lifts to a character $\tilde{\chi}_s :P_s \to \mathbb C^*$, we set
	$$\deg E(\sigma, s) := \frac{1}{q}\deg (E_\sigma \times_{\tilde{\chi}_s} \mathbb C^*).$$
	It is also possible to define the degree using differential geometric techniques, as follows: there is a further reduction $\sigma'$ to $K_s := K \cap L_s$, the maximal compact of $L_s$. Let $A$ be a connection on $E_{\sigma'}$ and consider its curvature $F_A \in \Omega^2(X, E_{\sigma'}(\mathfrak k_s))$. We have that $\chi_s(F_A) \in \Omega^2(X, i \R)$, and the degree is defined as
	$$\deg E(\sigma,s) := \frac{i}{2\pi}\int_X\chi_s(F_A).$$
	
	We can now define stability (see \cite{garcia-prada_hitchin-kobayashi_2012}).
	
	\begin{definition}
		A $G$-Higgs bundle $(E,\varphi)$ is:
		\begin{itemize}
			\item \textbf{semistable}, if for any element $s \in i \mathfrak k$ and reduction $\sigma \in H^0(C, E(G/P_s))$ such that $\varphi \in H^0(E_\sigma(\lieg_s) \otimes K_C)$, we have $\deg E(\sigma, s) \ge 0$.
			\item \textbf{stable}, if it is semistable and, for any element $s \in i\mathfrak k$ and reduction of structure group $\sigma \in H^0(C, E(G/P_s))$ such that $\varphi \in H^0(E_\sigma(\lieg_s) \otimes K_C)$, we have $\deg E(\sigma, s) > 0$.
			\item \textbf{polystable}, if it is semistable and, for the $s \in i \mathfrak k$ and $\sigma \in H^0(C, E(G/P_s))$ such that $\varphi \in H^0(E_\sigma(\lieg_s) \otimes K_C)$ and we have $\deg E(\sigma, s) = 0$, there exists a reduction $\sigma' \in H^0(E_\sigma(P_s/L_s))$ of $E_\sigma$ to $L_s$ such that $\varphi \in H^0(E_{\sigma'}(\lieg_s^0) \otimes K_C)$.
		\end{itemize}
	\end{definition}
	
	There exists a \textbf{moduli space of isomorphism classes of polystable $G$-Higgs bundles over $C$} which is a complex algebraic variety of dimension $2(g-1)\dim(G)$. It was constructed in this generality by Simpson \cite{simpson_higgs_1992, simpson_moduli_1994}. We denote it by $\mathcal M(G)$.
	
	\begin{remark}\label{higgspairs}
		A more general construction using Geometric Invariant Theory can be found in the book of Schmitt \cite{schmitt2008geometric}. This construction includes stability and moduli spaces for \emph{$(G,V)$-Higgs pairs}, where $V$ is any representation of $G$ (not necessarily the adjoint one). In Section \ref{cstarmg} examples of such objects will appear to describe the fixed point locus of the $\C^\times$-action on $\mathcal M(G)$. However, we will not need to use these notions of stability explicitly.
		
		Similarly, the case where $G$ is not connected has been studied in \cite{garcia-prada_hitchin_kobayashi_nonconnected_unpublished}. Nevertheless, in the present work all the structure groups appearing in questions related to stability will be connected.
	\end{remark}
	
	An important feature of the moduli space $\mathcal M(G)$ is the existence of a completely integrable system on it \cite{hitchin_stable_1987}. It is defined by using that the ring of invariant polynomial functions $\C[\lieg]^G = \C[p_1,\dots,p_r]$ is a polynomial ring on $r = \rank \lieg$ generators.
	
	\begin{definition}\label{hitchinmap}
		The \textbf{Hitchin map} is defined by
	\begin{align*}
		h : \mathcal M(G) &\to \mathcal A(G) := \bigoplus_{i=1}^rH^0(C,K_C^{\deg p_i})\\
		(E,\varphi) &\mapsto (p_1(\varphi),\dots,p_r(\varphi)).
	\end{align*}
	\end{definition}
	
	The Hitchin map is a proper map of algebraic varieties, a fact which will appear later.

	In order to later study the Bia\l yinicki-Birula theory of the $\C^\times$-action on $\mathcal M(G)$, we now recall (see e.g. \cite[Theorem 2.3]{biswas_infinitesimal_1994}, \cite[Section 3.3]{garcia-prada_hitchin-kobayashi_2012}) the deformation theory for $G$-Higgs bundles, which allows to identify the tangent space of $\mathcal M(G)$ at its smooth points.
	
	\begin{definition}
		Let $(E,\varphi)$ be a $G$-Higgs bundle. Its \textbf{deformation complex} is the complex of sheaves defined by
		$$C^\bullet(E,\varphi) : E(\lieg) \xrightarrow{[\varphi, -]} E(\lieg) \otimes K_C.$$
	\end{definition}
	
	\begin{proposition}[{\cite[Theorem 2.3]{biswas_infinitesimal_1994}}]\label{deformationtheory}
		The space of infinitesimal deformations of a $G$-Higgs bundle $(E,\varphi)$ is naturally isomorphic to $\mathbb H^1(C^\bullet(E,\varphi)).$
	\end{proposition}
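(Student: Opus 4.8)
The plan is to realize infinitesimal deformations concretely as deformations of $(E,\varphi)$ over the dual numbers $\C[\epsilon]/(\epsilon^2)$ and to classify them by \v{C}ech methods, matching the resulting data term by term with the \v{C}ech bicomplex computing $\mathbb{H}^1(C^\bullet(E,\varphi))$. First I would fix an open cover $\{U_\alpha\}$ of $C$ trivializing $E$, with transition functions $g_{\alpha\beta}$ and local expressions $\varphi_\alpha$ of the Higgs field satisfying $\varphi_\alpha = \Ad(g_{\alpha\beta})\varphi_\beta$ on overlaps. A first-order deformation then consists of deformed transition functions $\tilde g_{\alpha\beta} = g_{\alpha\beta}\exp(\epsilon\, a_{\alpha\beta})$, where $\{a_{\alpha\beta}\}$ is an $E(\lieg)$-valued \v{C}ech $1$-cochain, together with deformed local Higgs fields $\tilde\varphi_\alpha = \varphi_\alpha + \epsilon\,\psi_\alpha$, where the $\psi_\alpha$ are local sections of $E(\lieg)\otimes K_C$.

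Next I would expand the two structural constraints to first order in $\epsilon$. The cocycle condition $\tilde g_{\alpha\beta}\tilde g_{\beta\gamma} = \tilde g_{\alpha\gamma}$ yields $\delta a = 0$, so that $\{a_{\alpha\beta}\}$ is a genuine \v{C}ech $1$-cocycle with values in $E(\lieg)$, representing the underlying deformation of the bundle $E$. The requirement that the $\tilde\varphi_\alpha$ glue into a section of the \emph{deformed} adjoint bundle, namely $\tilde\varphi_\alpha = \Ad(\tilde g_{\alpha\beta})\tilde\varphi_\beta$, produces at order $\epsilon$ the coupling $\delta\psi = [\varphi, a]$ (up to sign and the usual identification of local representatives), the bracket with $\varphi$ arising precisely from differentiating $\Ad(\exp(\epsilon\,a_{\alpha\beta}))$. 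Together, $\delta a = 0$ and $\delta\psi = \ad(\varphi)(a)$ are exactly the two conditions defining a $1$-cocycle for the total differential $D = \delta \pm \ad(\varphi)$ of the \v{C}ech bicomplex of the two-term complex $C^\bullet(E,\varphi)$.

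I would then identify isomorphism of deformations with the passage to hypercoboundaries. An isomorphism of two first-order deformations restricting to the identity at $\epsilon = 0$ is given by local gauge transformations $\exp(\epsilon\, h_\alpha)$ with $h_\alpha$ a section of $E(\lieg)$ over $U_\alpha$; computing its effect gives $a_{\alpha\beta}\mapsto a_{\alpha\beta} + (h_\alpha - h_\beta)$ and $\psi_\alpha \mapsto \psi_\alpha + [\varphi,h_\alpha]$, which is precisely $D(\{h_\alpha\})$. Hence the set of isomorphism classes of first-order deformations is the degree-one total cohomology of the \v{C}ech bicomplex, which is by definition $\mathbb{H}^1(C^\bullet(E,\varphi))$, yielding the claimed natural isomorphism.

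The main obstacle is bookkeeping rather than conceptual. One must pin down the signs and the exact placement of the adjoint action so that the coupling term reproduces the differential $[\varphi,-]$ of the complex as stated, and one must verify naturality, i.e. independence of the trivializing cover; the latter follows by passing to common refinements and invoking that \v{C}ech cohomology computes the hypercohomology of a bounded complex of locally free sheaves on a curve. I expect the reconciliation of the sign convention in the relation $\delta\psi = [\varphi,a]$ to be the only genuinely delicate point, everything else being a routine order-by-order expansion.
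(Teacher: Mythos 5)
Your proposal is correct and takes essentially the same route as the paper: the paper cites Biswas--Ramanan for the full proof but sketches exactly this argument, identifying a hypercohomology class with a \v{C}ech cocycle $\{(s_{ij}, t_i)\}$ in which the $s_{ij} \in H^0(U_i \cap U_j, E(\lieg))$ glue the trivial deformations $E|_{U_i} \times \Spec \C[\varepsilon]/\varepsilon^2$ via $1_G + \varepsilon s_{ij}$ and the $t_i \in H^0(U_i, E(\lieg)\otimes K_C)$ deform the Higgs field by $\varphi|_{U_i} + \varepsilon t_i$ --- precisely your data $(a_{\alpha\beta}, \psi_\alpha)$. Your order-by-order expansion, the coupled cocycle conditions $\delta a = 0$ and $\delta\psi = [\varphi, a]$, and the identification of isomorphisms of deformations with hypercoboundaries are the standard details behind that sketch.
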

	
	The main idea for the proof, which we shall use later, is that under a suitable finite covering $C = \bigcup_{i \in I} U_i$, a hypercohomology class can be seen as a \v Cech cocycle $\{(s_{ij}, t_i)\}_{i,j \in I}$ such that $s_{ij} \in H^0(U_i \cap U_j, E(\lieg))$ are used to glue the trivial deformation $E|_{U_i} \times \Spec \C[\varepsilon]/\varepsilon^2$ by using the transition functions $1_G + \varepsilon s_{ij}$ on the intersections $U_i \cap U_j$, and $t_i \in H^0(U_i, E(\lieg) \otimes K_C)$ are used to deform the Higgs field by $\varphi'|_{U_i} := \varphi|_{U_i} + \varepsilon t_i$.
	
	The Bia\l ynicki-Birula description of the upward flows in terms of the weights at tangent spaces applies over smooth fixed points. In order to understand these, we need to recall the notion of \textit{simple} $G$-Higgs bundle.
	
	\begin{definition}
		A $G$-Higgs bundle $(E,\varphi)$ is \textbf{simple} if $\Aut(E,\varphi) = Z(G)$.
	\end{definition}
	
	Then, we have \cite[Proposition 3.18]{garcia-prada_hitchin-kobayashi_2012} the following smoothness result.
	
	\begin{proposition}\label{smoothness}
		If the $G$-Higgs bundle $(E,\varphi)$ is stable and simple, it defines a smooth point in $\mathcal M(G)$.
	\end{proposition}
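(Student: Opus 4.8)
The plan is to run deformation theory through the complex $C^\bullet(E,\varphi)$. By Proposition \ref{deformationtheory} the Zariski tangent space to $\mathcal M(G)$ at the point represented by the stable $G$-Higgs bundle $(E,\varphi)$ is $\mathbb{H}^1(C^\bullet(E,\varphi))$, while the obstructions to extending first-order deformations lie in the second hypercohomology $\mathbb{H}^2(C^\bullet(E,\varphi))$. Smoothness will follow once I show that these obstructions vanish and that $\dim\mathbb{H}^1 = 2(g-1)\dim G$, the dimension of the moduli space; here stability serves only to guarantee that $(E,\varphi)$ is a genuine point at which the tangent space is computed by $\mathbb{H}^1$.

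First I would identify the lowest hypercohomology group. From the hypercohomology of the two-term complex one reads off $\mathbb{H}^0(C^\bullet(E,\varphi)) = \ker\big([\varphi,-]\colon H^0(C,E(\lieg)) \to H^0(C,E(\lieg)\otimes K_C)\big)$, which is precisely the Lie algebra of $\Aut(E,\varphi)$. Since $(E,\varphi)$ is simple, $\Aut(E,\varphi) = Z(G)$, and as $G$ is semisimple this center is finite, with Lie algebra $\mathfrak{z}(\lieg)=0$. Hence $\mathbb{H}^0(C^\bullet(E,\varphi)) = 0$.

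Next I would invoke Serre duality for the deformation complex. The Killing form $B$ furnishes an isomorphism $E(\lieg) \cong E(\lieg)^*$, and with respect to the induced pairing the adjoint operator $[\varphi,-]$ is self-adjoint up to sign; consequently $C^\bullet(E,\varphi)$ is isomorphic to its Serre dual $\mathcal{H}om(C^\bullet(E,\varphi),K_C)[1]$. This yields a perfect pairing $\mathbb{H}^i(C^\bullet) \times \mathbb{H}^{2-i}(C^\bullet) \to \C$, and in particular $\mathbb{H}^2(C^\bullet(E,\varphi)) \cong \mathbb{H}^0(C^\bullet(E,\varphi))^* = 0$. Vanishing of the obstruction space shows the deformation is unobstructed, so $(E,\varphi)$ is a smooth point. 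To confirm the dimension I would run Riemann--Roch: the adjoint bundle $E(\lieg)$ has degree $0$, so $\chi(E(\lieg)) = (1-g)\dim G$ and $\chi(E(\lieg)\otimes K_C) = (g-1)\dim G$, whence $\chi(C^\bullet) = \chi(E(\lieg)) - \chi(E(\lieg)\otimes K_C) = -2(g-1)\dim G$ and $\dim\mathbb{H}^1 = \dim\mathbb{H}^0 + \dim\mathbb{H}^2 - \chi(C^\bullet) = 2(g-1)\dim G$.

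The main obstacle is the Serre-duality step: one must verify carefully that the self-pairing on $C^\bullet(E,\varphi)$ arising from $B$ together with Serre duality on $C$ is genuinely perfect and identifies $\mathbb{H}^2$ with the dual of $\mathbb{H}^0$. The rest is formal, and it is simplicity rather than stability that does the real work, collapsing both $\mathbb{H}^0$ and, via duality, $\mathbb{H}^2$.
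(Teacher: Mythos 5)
Your proof is correct and is essentially the argument behind the result: the paper itself does not prove this proposition but cites \cite[Proposition 3.18]{garcia-prada_hitchin-kobayashi_2012}, and the proof there is exactly your deformation-theoretic one --- simplicity together with finiteness of $Z(G)$ for semisimple $G$ forces $\mathbb{H}^0(C^\bullet(E,\varphi))=0$, the Killing-form self-duality of the deformation complex plus Serre duality then kills $\mathbb{H}^2$, and unobstructedness with the Riemann--Roch count gives smoothness in dimension $2(g-1)\dim G$. The only step worth making explicit is the role of stability: it guarantees that $\mathcal M(G)$ near $(E,\varphi)$ is modeled on the deformation (Kuranishi) space modulo $\Aut(E,\varphi)=Z(G)$, which acts trivially, so that unobstructedness of deformations genuinely translates into smoothness of the moduli space at that point.
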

	
	\subsection{$\C^\times$-action on $\mathcal M(G)$ and very stable Higgs bundles}\label{cstarmg}

	The moduli space $\mathcal M(G)$ is equipped with a natural action of $\C^\times$, where $\lambda \in \C^\times$ acts by scaling the Higgs field:
	$$(E,\varphi) \mapsto (E,\lambda \varphi).$$
	
	Notice that the Hitchin map $h_G$ from Definition \ref{hitchinmap} is $\C^\times$-equivariant with respect to the $\C^\times$-action on $\mathcal A(G)$ with weight $\deg p_i$ on the $H^0(C,K_C^{\deg p_i})$ factor. This, together with the properness of $h_G$, implies that $\mathcal M(G)$ is a semiprojective variety. The Bia\l ynicki-Birula theory for this $\C^\times$-action motivates the following definition.
	
	\begin{definition}[{\cite[Definitions 2.12 and 4.1]{hausel_very_2022}}]
		A smooth fixed point $(E,\varphi) \in \mathcal M(G)^{s\C^\times}$ is \textbf{very stable} if the only point in both its upward flow and the core is itself:
		$$W^+_{(E,\varphi)} \cap \mathcal C = \{(E,\varphi)\}.$$
	\end{definition}
	
	By Proposition \ref{verystableclosed}, a smooth fixed point $(E,\varphi)$ is very stable if and only if its upward flow $W^+_{(E,\varphi)}$ is closed. Moreover, by the $\C^\times$-equivariance of the Hitchin map $h_G$ and the fact that $h_G^{-1}(0)$ consists of the nilpotent polystable $G$-Higgs bundles, a smooth fixed point is very stable if and only if the only nilpotent Higgs bundle on its upward flow is the fixed point itself.
	
	Furthermore, there is a natural weight one symplectic structure $\omega = d\theta$ on the smooth locus $\mathcal M(G)^s$. The form $\theta$ is given at $(E,\varphi) \in \mathcal M(G)^s$ by identifying $T_{(E,\varphi)}\mathcal M(G)$ with $\mathbb H^1(C^\bullet(E,\varphi))$ via Proposition \ref{deformationtheory}, projecting to $H^1(C, E(\lieg))$ and using Serre duality to pair with $\varphi \in H^0(C,E(\lieg) \otimes K_C)$. As a consequence, Proposition \ref{bblagrangians} applies and the upward flows define Lagrangian subvarieties.
	
	We now describe the fixed points of the $\C^\times$-action \cite[Section 4.2]{biquard_arakelov-milnor_2021} (see also \cite{simpson_constructing_1988}). These are related to $\Z$-gradings of Lie algebras.
	
	\begin{definition}
		A \textbf{$\Z$-grading} of the Lie algebra $\lieg$ is a direct sum decomposition as vector spaces
		$$\lieg = \bigoplus_{j \in \Z}\lieg_j,$$
		\noindent such that $[\lieg_i,\lieg_j] \subseteq \lieg_{i+j}$.
	\end{definition}
	
	Given such a grading, the piece $\lieg_0 \subseteq \lieg$ is a Lie subalgebra and hence it has a corresponding connected subgroup $G_0 \subseteq G$. Since $\lieg$ is semisimple, there exists a \textbf{grading element} $\zeta \in \lieg_0$ such that
	$$\lieg_j = \{X \in \lieg : [\zeta, X] = jX\}.$$
	We have the description $G_0 = C_G(\zeta)$ as the centraliser of $\zeta$ in $G$. In order to see this, it suffices to justify that the centraliser of a semisimple element of $\lieg$ under the adjoint action of $G$ is connected. Let $S \subseteq G$ be the torus with Lie algebra $\left<\zeta\right> \subseteq \lieg$, which exists because $\zeta$ is semisimple. Then, we have $C_G(\zeta) = C_G(S)$. Now, the centraliser of a torus is always connected if $G$ is \cite[Theorem 22.3]{humphreys2012linear}, and we conclude. Furthermore, the fact that $[\lieg_0,\lieg_j] \subseteq \lieg_j$ implies that $G_0$ acts on each $\lieg_j$ by the adjoint representation.
	
	\begin{definition}
		Let $\hat{G} \subseteq G$ be a subgroup and $V \subseteq \lieg$ be a subspace which is invariant under the adjoint representation restricted to $\hat{G}$. A $G$-Higgs bundle $(E,\varphi)$ \textbf{reduces to a $(\hat{G},V)$-Higgs pair} if there exists a reduction of structure group $\sigma \in H^0(C, E(G/\hat{G}))$ of $E$ to $\hat{G}$ such that $\varphi \in H^0(C, E_\sigma(V) \otimes K_C) \subseteq H^0(C, E(\lieg) \otimes K_C).$
	\end{definition}
	
	As explained in Remark \ref{higgspairs}, there exists a notion of polystability for $(G_0,\lieg_j)$-Higgs pairs, and we have the following result (see e.g. \cite[Proposition 4.15]{biquard_arakelov-milnor_2021}).
	
	\begin{proposition}
		A polystable $G$-Higgs bundle $(E,\varphi) \in \mathcal M(G)$ is fixed by the $\C^\times$-action if and only if there exists a $\Z$-grading $\lieg = \bigoplus_{j \in \Z}\lieg_j$ such that $(E,\varphi)$ reduces to a polystable $(G_0,\lieg_j)$-Higgs pair for $j \neq 0$. 
	\end{proposition}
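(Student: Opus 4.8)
The statement is an equivalence whose content, in both directions, is to pass between a purely moduli-theoretic condition (the class $[(E,\varphi)]$ being $\C^\times$-invariant) and the existence of an honest one-parameter subgroup of automorphisms of $E$ scaling $\varphi$. My plan is to organize the proof around this translation, handling the reverse implication by direct construction and the forward one by an algebraic-group argument that is where polystability does its work.

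For the reverse implication, suppose $(E,\varphi)$ reduces to a $(G_0,\lieg_j)$-Higgs pair via $\sigma \in H^0(C,E(G/G_0))$ with grading element $\zeta \in \lieg_0$, so that $\lieg_k = \ker(\ad(\zeta)-k\,\Id)$ and $\varphi \in H^0(C,E_\sigma(\lieg_j)\otimes K_C)$ with $j \neq 0$. Since every element of $G_0 = C_G(\zeta)$ commutes with $\exp(s\zeta)$, the one-parameter subgroup generated by $\zeta$ lies in $Z(G_0)$, and because the eigenvalues of $\ad(\zeta)$ are integers the assignment $\lambda \mapsto \exp((\log\lambda)\zeta)$ descends to a single-valued central element $u_\lambda \in Z(G_0)$ acting on $\lieg_k$ by $\lambda^k$. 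A central element of $G_0$ defines a global automorphism of the $G_0$-bundle $E_\sigma$, hence of $E = E_\sigma \times_{G_0} G$, and this automorphism carries $\varphi$ to $\lambda^j\varphi$. Given any $t \in \C^\times$, choosing $\lambda$ with $\lambda^j = t$ (possible as $j \neq 0$) exhibits an isomorphism $(E,\varphi) \xrightarrow{\sim} (E,t\varphi)$, so $(E,\varphi)$ is $\C^\times$-fixed.

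For the forward implication, the plan is to linearize the fixed-point condition. Recall that $\Aut(E)$ is a finite-dimensional linear algebraic group with Lie algebra $H^0(C,E(\lieg))$, and that the fixed-point condition supplies, for each $\lambda$, some $f_\lambda \in \Aut(E)$ with $\Ad(f_\lambda)\varphi = \lambda\varphi$. I would assemble these into the algebraic group
\[
\tilde G = \{(f,\lambda) \in \Aut(E)\times\C^\times : \Ad(f)\varphi = \lambda\varphi\},
\]
with projection $\pi : \tilde G \to \C^\times$ that is surjective by the fixed-point condition and has $\ker\pi = \Aut(E,\varphi)$. Since $(E,\varphi)$ is polystable, $\Aut(E,\varphi)$ is reductive \cite{garcia-prada_hitchin-kobayashi_2012}, so $\tilde G$ sits in an extension
\[
1 \to \Aut(E,\varphi) \to \tilde G \xrightarrow{\ \pi\ } \C^\times \to 1.
\]
An extension of a torus by a reductive group has no nontrivial connected normal unipotent subgroup, hence $\tilde G^\circ$ is reductive. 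Choosing a maximal torus $T \subseteq \tilde G^\circ$, the restriction $\pi|_T$ is surjective and induces a map of cocharacter lattices $X_*(T) \to \Z$ with image $m\Z$ for some $m \geq 1$; a cocharacter $\rho : \C^\times \to T$ with $\pi\circ\rho(\lambda) = \lambda^m$ then yields a homomorphism $f := \mathrm{pr}_1\circ\rho : \C^\times \to \Aut(E)$ with $\Ad(f(\lambda))\varphi = \lambda^m\varphi$. Its generator is a global section $\psi \in H^0(C,E(\lieg))$ that is fibrewise semisimple with integer eigenvalues and satisfies $[\psi,\varphi] = m\varphi$. The centralizer of $f$ defines a reduction $E_\sigma$ of $E$ to $G_0 = C_G(\zeta)$, where $\zeta$ represents the (constant, by connectedness of $C$) conjugacy class of $\psi$; the $\ad(\zeta)$-weight decomposition is the desired $\Z$-grading, and $[\psi,\varphi] = m\varphi$ places $\varphi \in H^0(C,E_\sigma(\lieg_m)\otimes K_C)$ with $m \neq 0$. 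Polystability of the resulting $(G_0,\lieg_m)$-Higgs pair is then inherited from that of $(E,\varphi)$, for which I would cite \cite{biquard_arakelov-milnor_2021}.

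The hard part will be the forward direction's passage from the moduli-theoretic invariance to a genuine cocharacter of $\Aut(E)$: verifying that $\tilde G$ is an algebraic group of finite type, that polystability forces $\Aut(E,\varphi)$ and hence $\tilde G^\circ$ to be reductive, and that a splitting cocharacter exists. This reductivity-and-splitting step is precisely the algebraic shadow of Simpson's theorem that polystable $\C^\times$-fixed Higgs bundles are complex variations of Hodge structure. An alternative route establishes the same conclusion analytically: uniqueness of the harmonic metric solving Hitchin's equations forces it to be invariant under the circle action, producing the eigenbundle (Hodge) decomposition and thus $\psi$ directly. I would present the algebraic argument as the main line and defer the metric input, as well as the equivalence of polystability for the pair and for the $G$-Higgs bundle, to \cite{simpson_constructing_1988, biquard_arakelov-milnor_2021}.
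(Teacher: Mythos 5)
The paper does not actually prove this proposition: it is quoted from the literature, with the reference \cite[Proposition 4.15]{biquard_arakelov-milnor_2021} standing in for a proof. So there is no in-paper argument to compare yours against; your proposal must be judged on its own terms, and on those terms it is essentially correct --- it is a faithful reconstruction of the standard argument. One imprecision in your reverse direction: $\exp((\log\lambda)\zeta)$ is \emph{not} single-valued in $G$. Changing the branch of the logarithm changes it by $\exp(2\pi i\zeta)$, which lies in $Z(G)$ (because $\ad(\zeta)$ has integer eigenvalues, so $\Ad(\exp(2\pi i \zeta))=\Id$) but need not be the identity: for $G=\SL_2(\C)$ and $\zeta=\omega_1^\vee$ one gets $-\Id$. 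This is harmless --- any choice of branch gives an element of $Z(G_0)$ acting on $\lieg_k$ by $\lambda^k$, which is all you use --- but the claim of single-valuedness should be dropped. Your forward direction is sound: $\tilde G$ is a closed subgroup of the linear algebraic group $\Aut(E)\times\C^\times$, its identity component is reductive by the extension argument (the unipotent radical maps to the trivial subgroup of $\C^\times$, hence sits as a connected normal unipotent subgroup of the reductive $\Aut(E,\varphi)^\circ$), the cocharacter splitting exists because $\pi(\tilde G^\circ)=\C^\times$ and surjections of connected groups carry maximal tori onto maximal tori, the fibrewise derivative $\psi$ of the resulting one-parameter subgroup is semisimple with integer $\ad$-eigenvalues, its conjugacy class is locally constant (conjugacy classes of cocharacters form a discrete family) hence constant on the connected curve, and the frames where $\psi$ equals $\zeta$ give the reduction to $C_G(\zeta)$, with $[\psi,\varphi]=m\varphi$ placing $\varphi$ in degree $m\neq 0$.

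Two ingredients remain outsourced, and you should be explicit about their status. Reductivity of $\Aut(E,\varphi)$ for polystable $(E,\varphi)$ is a genuine theorem (via the Hitchin--Kobayashi correspondence, $\Aut(E,\varphi)$ is the complexification of the compact group of automorphisms preserving the harmonic reduction), and citing \cite{garcia-prada_hitchin-kobayashi_2012} for it is appropriate; this is indeed where polystability enters and your argument correctly isolates it. The more delicate deferral is your final sentence: that the induced $(G_0,\lieg_m)$-pair is polystable \emph{as a pair}. This is a nontrivial comparison of two different stability notions --- stability for the pair is formulated through parabolic subgroups of $G_0$ and its representation on $\lieg_m$, not through parabolics of $G$ --- and it is exactly what makes the ``only if'' direction say more than ``fixed points admit a reduction''. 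Deferring it to \cite{biquard_arakelov-milnor_2021} is defensible, since that is precisely where the paper itself points for the whole statement, but your write-up should acknowledge that without this input your argument produces a reduction to a $(G_0,\lieg_m)$-pair, not to a \emph{polystable} one.
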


	Notice that, as can also be deduced from the $\C^\times$-equivariance of $h_G$, the above implies that fixed points are nilpotent (as there are finitely many pieces in the grading). Of course, there are nilpotent Higgs bundles which are not $\C^\times$-fixed. For example, those that reduce to $(G_0,\bigoplus_{j > 0}\lieg_j)$-Higgs pairs but do not reduce further to any $(G_0,\lieg_j)$-Higgs pair.
	
	\begin{example}\label{slnfixed}
		Consider the case of $G=\SL_n(\C)$. Via the standard representation, a $\SL_n(\C)$-Higgs bundle can be seen as a pair $(E,\varphi)$ where $E$ is a vector bundle of rank $n$ and trivial determinant over $C$ and $\varphi : E \to E \otimes K_C$ is a traceless twisted bundle endomorphism. In this case, it is known (as shown by Hitchin \cite[Section 7]{hitchin_self-duality_1987} and Simpson \cite[Section 4]{simpson_higgs_1992}, see also \cite[Lemma 9.2]{hausel_mirror_2003} and \cite[Section 3.1]{hausel_very_2022}) that fixed points are those that admit a direct sum splitting
		$$E = E_0 \oplus \dots \oplus E_{k-1},$$
		\noindent of vector subbundles $E_j \subseteq E$ with  $\rank E_j = r_j$, $\sum_{j=0}^{k-1}r_j = n$ and such that $\varphi(E_j) \subseteq E_{j+1} \otimes K_C$. These objects and their moduli spaces were studied in depth in \cite{garcia-prada_y_2013, garcia-prada_motives_2014}. In our setting, this corresponds to the $\Z$-grading of $\liesl_n(\C)$ given by choosing a splitting $\C^n = V_0 \oplus \dots \oplus V_{k-1}$ with $\dim V_j = r_j$ and letting
		$$\lieg_j := \{A \in \liesl_n(\C) : A(V_i) \subseteq V_{j+i}\}.$$
	\end{example}
	
	In this work we are interested in the fixed points with \textit{generically regular} Higgs field, that is, such that the centraliser of the Higgs field has (generically) the minimum possible dimension. In the case of $G=\SL_n(\C)$ from the example above, this corresponds to the situation where every $r_j = 1$, known in the literature as \textit{fixed points of type $(1,1,\dots,1)$}. These are the fixed points that were considered in \cite{hausel_very_2022} in the context of very stable Higgs bundles. We now define the analogue for $G$-Higgs bundles.
	
	Fix a maximal torus and Borel subgroup $T \subseteq B \subseteq G$, with Lie algebras $\liet \subseteq \mathfrak b \subseteq \lieg$. This determines a choice of simple roots $\Pi = \{\alpha_1,\dots,\alpha_r\} \subseteq \Delta := \Delta(\lieg, \liet)$ where $r = \rank \lieg = \dim \liet$. Given a root $\alpha \in \Delta$, we denote by $\lieg_{\alpha}$ the corresponding one-dimensional root subspace. We also denote by $\alpha^\vee$ the coroot corresponding to $\alpha$. We have the fundamental weights $\{\omega_1,\dots,\omega_r\} \subseteq \liet^*$ which are the dual basis to the simple coroots, and the fundamental coweights $\{\omega_1^\vee,\dots,\omega_r^\vee\} \subseteq \liet$ the dual basis to the simple roots.
	
	\begin{definition}
		The \textbf{Borel grading} of $\lieg$ is given by $\lieg_0 := \liet$ and 
		$$\lieg_j := \bigoplus_{\alpha \in \Delta : \heightr(\alpha) = j}\lieg_{\alpha}$$ \noindent for $j \neq 0$, where $\heightr(\alpha)$ denotes the height of $\alpha$ with respect to $\Pi$. Equivalently, it is given by the grading element $\zeta = \sum_{j=1}^r\omega_j^\vee$.
	\end{definition}
	
	The terminology refers to the fact that every $\Z$-grading corresponds to a parabolic subalgebra $\liep := \bigoplus_{j \ge 0}\lieg_j$, and in the Borel grading the corresponding subalgebra is precisely the Borel subalgebra $\mathfrak b \subseteq \lieg$.
	
	\begin{remark}
		In the Borel grading, we have that $G_0 = T$ and that the degree one part is the sum of the simple root spaces, i.e. $\lieg_1 = \bigoplus_i\lieg_{\alpha_i}$. The $G_0$-action on $\lieg_1$ has an open orbit $\Omega \subseteq \lieg_1$ given by elements $e = \sum_in_iX_{\alpha_i}$ with $n_i \neq 0$ for all $i$. which are the regular nilpotents of $\lieg$ belonging to $\lieg_1$. The elements of degree greater than one are not regular. Similarly, any other choice of grading does not have homogeneous regular nilpotent elements.
	\end{remark}
	Thus we will focus on the fixed point components given by the Borel grading and such that the Higgs field has degree one.
	\begin{definition}
		A fixed point $(E,\varphi) \in \mathcal M(G)^{\C^\times}$ is of \textbf{Borel type} if it reduces to a $(G_0,\lieg_1)$-Higgs pair for the Borel grading of $\lieg$ and $\varphi$ belongs to the open $G_0$-orbit $\Omega \subseteq \lieg_1$ generically.
	\end{definition}
	We can write fixed points of Borel type in terms of vector bundles for the classical groups.
	\begin{example}\label{typea}
		If $G = \GL_n(\C)$, $\PGL_n(\C)$ or $\SL_n(\C)$, fixed points of Borel type can be regarded as vector bundles $E = L_1 \oplus \dots \oplus L_{n}$ where $\rank L_i = 1$ with Higgs field given by $\varphi = (\delta_{i})_{i}$ where $\delta_{i} \in H^0(L_i^*L_{i+1}K_C)$, i.e. a choice of section for each simple root (notice here that we choose $T$ to be the diagonal matrices with coordinates $e_i$, and $e_{j+1}-e_j$ to be the positive roots, which is the opposite of some conventions in the literature). These sections must not be identically zero in order for the Higgs field to be generically in the open orbit. 
	\end{example}
	
	\begin{example}\label{typed}
		If $G = \SO_{2n}(\C)$, where for convenience we think of the bilinear quadratic form as $Q = (\delta_{i,2n-j+1})_{i,j}$ (here $\delta_{ab}$ is the Dirac delta), fixed points of Borel type are given by vector bundles
		$$E = (L_1 \oplus \dots \oplus L_{n}) \oplus (L_{n}^* \oplus \dots \oplus L_1^*),$$
		\noindent with $\rank L_j = 1$ and the Higgs field is determined by nonzero sections $\delta_i \in H^0(L_i^*L_{i+1}K_C)$ for $i \in \{1,\dots,n-1\}$ (these correspond to the simple roots $-e_i + e_{i+1}$) and $\eta \in H^0(L_{n-1}^*L_n^*K_C)$ (corresponding to the simple root $-e_{n-1} - e_n$). The Higgs field can be then written as
		$$\varphi = \begin{pNiceArray}{cccc|cccc}
			0        &        &          &       &           &        &           &   \\
			\delta_1 & \ddots &          &       &           &        &           &   \\
			& \ddots & 0        &       &           &        &           &   \\
			&        & \delta_{n-1} & 0     &           &        &           &   \\ \hline
			&        & \eta     & 0     & 0         &        &           &   \\
			&        &          & -\eta & -\delta_{n-1} & \ddots &           &   \\
			&        &          &       &           & \ddots & 0         &   \\
			&        &          &       &           &        & -\delta_1 & 0
		\end{pNiceArray}.$$
	\end{example}
	
	\begin{example}\label{typeb}
		Similarly, for $G = \SO_{2n+1}(\C)$ with bilinear quadratic form $Q = (\delta_{i,2n-j+2})_{i,j}$, fixed points of Borel type are
		$$E = (L_1 \oplus \dots \oplus L_{n}) \oplus \mathcal O_C \oplus (L_{n}^* \oplus \dots \oplus L_1^*),$$
		\noindent $\rank L_j = 1$ and the Higgs field is determined by nonzero sections $\delta_i \in H^0(L_i^*L_{i+1}K_C)$ for $i \in \{1,\dots,n-1\}$ for the simple roots $-e_i + e_{i+1}$ and $\eta \in H^0(L_{n}^*K_C)$ (corresponding to $-e_n$). The Higgs field can be written as
		$$\varphi = \begin{pNiceArray}{ccccccccc}
			0        &     &   &          &       &           &        &           &   \\
			\delta_1 & \ddots &          &       &      &     &        &           &   \\
			& \ddots & 0        &       &     &      &        &           &   \\
			&        & \delta_{n-1} & 0&     &           &        &           &   \\
			&        &     & \eta  &  0 &          &        &           &   \\
			& & & &-\eta & 0& &\\
			&        &    &      &  & -\delta_{n-1} & \ddots &           &   \\
			&        &   &       &       &           & \ddots & 0         &   \\
			&        &   &       &       &           &        & -\delta_1 & 0
		\end{pNiceArray}.$$
	\end{example}
	
	\begin{example}\label{typec}
		Finally we consider $G = \Sp_{2n}(\C)$ with symplectic form $Q = (\varepsilon_{i,j}\delta_{i,2n-j+1})_{i,j}$ where $$\varepsilon_{i,j} = \begin{cases}
			1 & i\le j\\
			-1 & i > j
		\end{cases}.$$ Fixed points of Borel type are
		$$E = (L_1 \oplus \dots \oplus L_{n}) \oplus (L_{n}^* \oplus \dots \oplus L_1^*),$$
		\noindent and the Higgs field is determined by nonzero sections $\delta_i \in H^0(L_i^*L_{i+1}K_C)$ for $i \in \{1,\dots,n-1\}$ corresponding to the simple roots $-e_i + e_{i+1}$ and $\eta \in H^0((L_{n}^2)^*K_C)$ corresponding to $-2e_n$. The Higgs field can be written as
		$$\varphi = \begin{pNiceArray}{cccc|cccc}
			0        &        &          &       &           &        &           &   \\
			\delta_1 & \ddots &          &       &           &        &           &   \\
			& \ddots & 0        &       &           &        &           &   \\
			&        & \delta_{n-1} & 0     &           &        &           &   \\ \hline
			&        &     & \eta     &   0       &        &           &   \\
			&        &          & & -\delta_{n-1} & \ddots &           &   \\
			&        &          &       &           & \ddots & 0         &   \\
			&        &          &       &           &        & -\delta_1 & 0
		\end{pNiceArray}.$$
	\end{example}
	
	The components $\delta_i,\eta$ of the Higgs field that appeared in the examples can be recovered generally as follows.
	
	\begin{definition}
		Let $(E,\varphi)$ be a fixed point of Borel type. For each simple root $\alpha_i \in \Pi$, we define the \textbf{$i$-th component} of $(E,\varphi)$ as:
		$$\delta_i := \pi_i(\varphi),$$
		\noindent where 
		$$\pi_i : H^0(E(\lieg) \otimes K_C) \to H^0(E(\lieg_{\alpha_i}) \otimes K_C)$$
		\noindent is the natural projection.
	\end{definition}
	
	Since $\lieg_1$ is the sum of simple root spaces, the components recover the Higgs field. Moreover, notice that the $E(\lieg_\alpha)$ are line bundles since $\dim \lieg_\alpha = 1$. Notice that the requirement that $\varphi$ generically belongs to the open $G_0$-orbit in $\lieg_1$ translates to all the components being nonzero.
	
	\begin{definition}\label{multvect}
		Let $(E,\varphi)$ be a fixed point of Borel type and $c \in C$. The \textbf{multiplicity vector of $(E,\varphi)$ at $c$} is the vector $(\ord_c \delta_{\alpha_i})_i \in \Z_{\ge 0}^r$ given by the orders of vanishing of each component.
	\end{definition}
	\subsection{Upward and downward flows on $\mathcal M(G)$}

	Now we describe the upward and downward flows for any smooth fixed point $(E,\varphi) \in \mathcal M(G)^{s\C^\times}$, not necessarily of Borel type. Recall from Proposition \ref{bbflows} that this amounts to studying the weights of the $\C^\times$-action induced on the tangent space at a fixed point. We will do so algebraically via the deformation theory of Proposition \ref{deformationtheory}, however there is an analytic proof \cite[Proposition 4.2]{collier_conformal_2018} in the $G=\GL_n(\C)$ case which can be readily adapted to the case of arbitrary semisimple structure group $G$. An algebraic proof also exists in the $G=\GL_n(\C)$ case \cite[Proposition 3.4]{hausel_very_2022}. 
	
	We start by considering the $\Z$-grading $\lieg := \bigoplus_j \lieg_j$ such that $(E,\varphi)$ reduces to a $(G_0,\lieg_k)$-Higgs pair. Let $\zeta \in \lieg_0$ be its grading element. Without loss of generality we may assume that $k=1$, otherwise we can use the subalgebra $\hat{\lieg} = \bigoplus_{j \in \Z} \lieg_{jk}$ and rescale the grading. Moreover, from the structure results about $\Z$-gradings of Lie algebras (e.g. \cite[Section 2.3]{biquard_arakelov-milnor_2021}) we may assume that $\zeta = \sum_{j=1}^rn_i\omega_i^\vee$ for $n_i \in \Z_{\ge 0}$. Then, as $\zeta$ is a coweight in $\liet$, it defines a cocharacter $\xi_{\bullet} : \C^\times \to G/Z(G) \simeq \Ad(G)$ of the adjoint group. Let $\mathfrak p := \bigoplus_{j \le 0}\lieg_j$ which is a parabolic subalgebra of $\lieg$ and $P \subseteq G$ the corresponding parabolic subgroup. We have the Levi decomposition $P \simeq U \rtimes G_0$, where $U$ is the unipotent radical of $P$. Consider the map:
	$$\pi_\lambda : P \to P$$
	\noindent given by $\pi_\lambda(p) := \xi_\lambda^{-1}p\xi_\lambda$. It is well defined even if $\xi_\lambda \in G^{\ad}$: we can choose any lift $\exp(t\zeta) \in G_0 \subseteq G$ where $t \in \C$ is a choice of logarithm with $\exp(t) = \lambda$, and then $\exp(-t\zeta)p\exp(t\zeta)$ is independent of the chosen $t$. We also use that $\liep$ is preserved by $\lieg_0$ so that the previous rule maps $P$ to itself. If $u \in U$ is in the unipotent radical, $\pi_\lambda(u) \to 1_G$ if $\lambda \to 0$. Indeed, on generators $\exp(X_\alpha)$ for negative roots $\alpha \in \Delta^-$ of degree $h := \alpha(\zeta) <0$ with respect to the $\Z$-grading, we have $$\pi_\lambda((X_\alpha)) = \exp(\Ad_{\xi_\lambda^{-1}}(X_\alpha)) = \exp(\lambda^{-h}X_\alpha)$$
	\noindent which limits to $1_G$ if $\lambda \to 0$. In other words, $\pi_\lambda$ interpolates between the identity (at $\lambda = 1$) and the projection to the Levi factor when limiting at $\lambda = 0$. Thus we define $\pi_0: P \to P$ to be the projection to the Levi $G_0 \subseteq P$. 
	
	Notice also that $P$ acts on $\lieg_{\le 1} := \mathfrak p \oplus \lieg_1$. Suppose that we have a $(P,\lieg_{\le 1})$-Higgs pair $(E',\varphi')$. We will show that $\pi_\lambda(E')$ acquires a natural Higgs field $\varphi'_\lambda$. For $\lambda \neq 0$, as $\pi_\lambda$ is given by conjugation by a group element, we have an isomorphism
	$$\psi_\lambda: \pi_\lambda(E') \rightarrow E',$$
	\noindent defined by regarding $\pi_\lambda(E')$ as the total space of $E'$ with action twisted by $\pi_\lambda$, choosing a logarithm $\exp(t) = \lambda$ and setting $\psi_\lambda(e) := e\exp(-t\zeta)$.
	The desired Higgs field is then $\varphi'_\lambda := \psi_\lambda^*(\lambda \varphi')$ and it does not depend on the chosen logarithm $t$. As can be seen in local coordinates, at $\lambda = 0$ the limiting Higgs field is obtained by projection to $\lieg_1$. In other words, the local expressions for $\varphi'_0$ are obtained by projecting the local expressions for any $\varphi'_\lambda$ to $\lieg_1$ (the result is independent of $\lambda$), which gives a well-defined section. Moreover, by construction we have for $\lambda \neq 0$ that $(\pi_\lambda(E'),\varphi'_\lambda) \simeq (E',\lambda \varphi')$ via $\psi_\lambda^*$.
	
	In particular, the above construction takes a $(P,\lieg_{\le 1})$-Higgs pair $(E',\varphi')$ and produces at $\lambda = 0$ a $(G_0,\lieg_1)$-Higgs pair, which we call the \textbf{associated graded} and denote by $\Gr(E',\varphi')$. That is:
	$$\Gr(E',\varphi') := (\pi_0(E'), \varphi'_0)$$
	This allows to describe the upward flow.
	
	\begin{proposition}\label{upwardflow}
		Let $(E,\varphi)$ be a smooth (stable and simple) $\C^\times$-fixed point corresponding to a $\Z$-grading of $\lieg$ with parabolic $\liep = \bigoplus_{j \le 0}\lieg_j$. Let $P \subseteq G$ be the corresponding parabolic subgroup. Its upward flow consists of the Higgs bundles $(E',\varphi') \in \mathcal M(G)$ that reduce to $(P,\lieg_{\le 1})$-Higgs pairs such that 
		$$\Gr(E',\varphi') \simeq (E,\varphi).$$
	\end{proposition}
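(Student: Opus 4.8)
The plan is to prove the two inclusions separately. Write $\mathcal F\subseteq\mathcal M(G)$ for the locus of Higgs bundles admitting a reduction to a $(P,\lieg_{\le1})$-Higgs pair $(E',\varphi')$ with $\Gr(E',\varphi')\simeq(E,\varphi)$; the goal is to show $\mathcal F=W^+_{(E,\varphi)}$. For the inclusion $\mathcal F\subseteq W^+_{(E,\varphi)}$ I would use directly the construction preceding the statement: given such $(E',\varphi')$, the assignment $\lambda\mapsto(\pi_\lambda(E'),\varphi'_\lambda)$ is an algebraic family over $\mathbb A^1$ whose restriction to $\C^\times$ is isomorphic to the orbit $\lambda\mapsto(E',\lambda\varphi')$ and whose fibre at $\lambda=0$ is $\Gr(E',\varphi')\simeq(E,\varphi)$. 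By separatedness of $\mathcal M(G)$ this fibre is the limit $\lim_{\lambda\to0}(E',\lambda\varphi')$, so $(E',\varphi')\in W^+_{(E,\varphi)}$.

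For the reverse inclusion I would first compute the positive tangent weights. By Proposition \ref{deformationtheory} the tangent space is $\mathbb H^1(C^\bullet(E,\varphi))$, and since $(E,\varphi)$ reduces to $G_0$ one has $E(\lieg)=\bigoplus_kE'(\lieg_k)$. The fixed-point isomorphism $(E,\lambda\varphi)\simeq(E,\varphi)$ is realized by the gauge transformation $\xi_\lambda^{-1}$, acting on $E'(\lieg_k)$ by $\lambda^{-k}$. Combining the scaling $\varphi\mapsto\lambda\varphi$ with this gauge transformation on a \v Cech representative $\{(s_{ij},t_i)\}$ shows that the $E'(\lieg_k)$-component of $s_{ij}$ carries weight $-k$, the $E'(\lieg_k)\otimes K_C$-component of $t_i$ carries weight $1-k$, and $[\varphi,-]$ preserves weights. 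Hence $C^\bullet(E,\varphi)=\bigoplus_mC^\bullet_m$ with $C^\bullet_m\colon E'(\lieg_{-m})\to E'(\lieg_{1-m})\otimes K_C$, and summing the strictly positive weights gives
$$T^+_{(E,\varphi)}\mathcal M(G)=\mathbb H^1\Big(\bigoplus_{m>0}C^\bullet_m\Big)=\mathbb H^1\big(E(\lieu)\xrightarrow{[\varphi,-]}E(\liep)\otimes K_C\big),$$
where $\lieu=\bigoplus_{k<0}\lieg_k$ is the nilpotent radical of $\liep=\lieg_{\le0}$.

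On the other side, the relative deformation complex of the map $\Gr$ from $(P,\lieg_{\le1})$-pairs to $(G_0,\lieg_1)$-pairs — the kernel of the projections $\liep\to\lieg_0$ and $\lieg_{\le1}\to\lieg_1$ — is exactly $E(\lieu)\to E(\liep)\otimes K_C$. As the whole picture is $\C^\times$-graded, the connecting map from the weight-zero hypercohomology of the graded complex into $\mathbb H^1$ of this relative complex vanishes on weight grounds, so the tangent space to the fibre $\mathcal F=\Gr^{-1}([(E,\varphi)])$ at $(E,\varphi)$ is $\mathbb H^1(E(\lieu)\to E(\liep)\otimes K_C)=T^+_{(E,\varphi)}\mathcal M(G)$. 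I would then check that the obstruction $\mathbb H^2(E(\lieu)\to E(\liep)\otimes K_C)$ vanishes: on a curve it is the cokernel of $[\varphi,-]\colon H^1(E(\lieu))\to H^1(E(\liep)\otimes K_C)$, whose Serre dual is $[\varphi,-]\colon H^0(E(\lieg_{\ge0}))\to H^0(E(\lieg_{\ge1})\otimes K_C)$; the latter is injective because its kernel lies in $\mathbb H^0(C^\bullet(E,\varphi))=0$ by simplicity of $(E,\varphi)$. Thus $\mathcal F$ is smooth at $(E,\varphi)$ of dimension $\dim T^+_{(E,\varphi)}\mathcal M(G)$.

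To conclude I would combine these facts. By the Bia\l ynicki--Birula description (Proposition \ref{bbflows}), $W^+_{(E,\varphi)}$ is smooth and irreducible of dimension $\dim T^+_{(E,\varphi)}\mathcal M(G)$, and we already know $\mathcal F\subseteq W^+_{(E,\varphi)}$; since $\mathcal F$ is smooth at $(E,\varphi)$ of the same dimension with the same tangent space there, it contains a Zariski-open neighbourhood $U$ of $(E,\varphi)$ in $W^+_{(E,\varphi)}$. Finally, $\mathcal F$ is $\C^\times$-invariant and every $x\in W^+_{(E,\varphi)}$ satisfies $\lim_{\lambda\to0}\lambda\cdot x=(E,\varphi)$, so for small $\lambda$ one has $\lambda\cdot x\in U\subseteq\mathcal F$ and hence $x\in\mathcal F$, giving $W^+_{(E,\varphi)}\subseteq\mathcal F$ and equality. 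The main obstacle is the weight bookkeeping: one must pin down the orientation of the $\C^\times$-weights so that the positive part yields the bundle directions $\lieu\subseteq\liep$ and the Higgs directions in $\lieg_{\le1}$ rather than their opposites, by correctly pairing the scaling of $\varphi$ against the compensating gauge transformation $\xi_\lambda$; once this identification is secured, the unobstructedness coming from simplicity and the $\C^\times$-contraction argument turn the tangent-level match into the global equality with no further trouble.
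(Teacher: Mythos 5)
Your proof is correct, and while its first half coincides with the paper's, it takes a genuinely different route for the hard inclusion. The easy inclusion (via the family $(\pi_\lambda(E'),\varphi'_\lambda)$) and the weight computation --- weight $-j$ on $\mathbb H^1(C_j^\bullet)$, hence $T^+_{(E,\varphi)}\mathcal M(G)=\mathbb H^1\big(E(\lieu)\xrightarrow{[\varphi,-]}E(\liep)\otimes K_C\big)$ --- are exactly the paper's. At that point the paper simply invokes Proposition \ref{bbflows} to identify $W^+_{(E,\varphi)}$ with $T^+_{(E,\varphi)}\mathcal M(G)$ and reads off from the \v Cech description that positive-weight deformations glue by $E(\lieg_{\le -1})$-valued automorphisms and perturb $\varphi$ by $\lieg_{\le 0}$-valued terms, i.e.\ it passes from the first-order picture to the global statement implicitly. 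You instead show that the reduction locus $\mathcal F$ is smooth at $(E,\varphi)$ of dimension $\dim T^+_{(E,\varphi)}\mathcal M(G)$: the tangent space is computed via the relative complex $E(\lieu)\to E(\liep)\otimes K_C$ (whose $\mathbb H^1$ is literally a direct summand of $\mathbb H^1(C^\bullet)$, since the whole deformation complex splits by weights, so your ``weight grounds'' vanishing is automatic), and unobstructedness follows from Serre duality and simplicity (your dualisation to $[\varphi,-]\colon H^0(E(\lieg_{\ge 0}))\to H^0(E(\lieg_{\ge 1})\otimes K_C)$ and the inclusion of its kernel into $\mathbb H^0(C^\bullet(E,\varphi))=0$ are correct); you then conclude by the open-neighbourhood-plus-$\C^\times$-contraction trick, which is sound because $\mathcal F$ is indeed $\C^\times$-invariant and every point of $W^+_{(E,\varphi)}$ flows into any neighbourhood of the fixed point. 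What your route buys is an explicit, rigorous infinitesimal-to-global step that the paper leaves implicit; what it costs is a dependence on deformation theory for $(P,\lieg_{\le 1})$-Higgs pairs and the morphism $\Gr$ --- tangent and obstruction spaces given by hypercohomology of the indicated complexes, plus the fact that the forgetful map from the fibre of $\Gr$ to $\mathcal M(G)$ is \'etale at the fixed point, so that its image really contains an open subset of $W^+_{(E,\varphi)}$. These foundations are standard but are not set up in the paper (cf.\ Remark \ref{higgspairs}), so they should be stated explicitly if your argument is to stand alone.
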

	
	\begin{proof}
		One inclusion is given by the discussion preceding the proposition, which implies that $(E',\lambda\varphi') \simeq (\pi_\lambda(E'),\varphi'_\lambda)$ limits at $(\pi_0(E'), \varphi'_0) = \Gr(E',\varphi') \simeq (E,\varphi)$ when $\lambda \to 0$. 
		
		For the other, recall from the Bia\l ynicki-Birula theory in Proposition \ref{bbflows} that the upward flow is given by the positive weight subspace of the tangent space to $\mathcal M(G)$ at the fixed point. From Proposition \ref{deformationtheory}, the tangent space is obtained as the first hypercohomology of the complex
		$$C^\bullet: E(\lieg) \xrightarrow{[\varphi,-]} E(\lieg) \otimes K_C.$$
		
		Notice that $\varphi \in H^0(E(\lieg_1) \otimes K_C)$, the complex splits as the direct sum of the following complexes
		
		$$C_j^\bullet : E(\lieg_j) \xrightarrow{[\varphi,-]} E(\lieg_{j+1}) \otimes K_C,$$
		
		\noindent so that the tangent space splits naturally as
		
		$$T_{(E,\varphi)}\mathcal M(G) = \bigoplus_{j \in \Z} \mathbb H^1(C_j^\bullet).$$
		
		Now we claim that the induced $\C^\times$-action on $T_{(E,\varphi)}\mathcal M(G)$ has weight $-j$ on $\mathbb H^1(C_j^\bullet)$. Indeed, consider a deformation $(\tilde{E},\tilde{\varphi}) \in \mathbb H^1(C_j^\bullet)$. This means that locally $\tilde{\varphi} = \varphi + \varepsilon \varphi_{j+1}$ where $\varphi_{j+1}$ is a local section of $E(\lieg_{j+1}) \otimes K_C$ obtained from the \v Cech cocycle defining the hypercohomology class. Moreover, $\tilde{E}$ is obtained by glueing local copies of the trivial deformation of $E$ via the isomorphisms $1+\varepsilon s$ where the $s$ are local sections of $E(\lieg_j)$, also obtained from the cocycle. The $\C^\times$-action sends $(\tilde{E},\tilde{\varphi}) \mapsto (\tilde{E},\lambda \tilde{\varphi}) = (\tilde{E},\lambda \varphi + \varepsilon\lambda \varphi_{j+1})$. After applying the automorphism given by $\xi_\lambda$ (acting as $\Ad_{\xi_\lambda^{-1}}$ on the Lie algebra), the resulting bundle is $(\tilde{E}', \varphi + \varepsilon\lambda^{-j}\varphi_{j+1})$ where $\tilde{E}'$ is the deformation obtained from the sections $\lambda^{-j}s$ since they belong to $E(\lieg_j)$. In other words, at the tangent space $\lambda \in \C^\times$ acts as
		$$(\{s\}, \{\varphi_{j+1}\}) \mapsto (\{\lambda^{-j}s\}, \{\lambda^{-j}\varphi_{j+1}\}),$$
		\noindent as claimed.
		
		Thus, the upward flow is naturally identified with the deformations in $\bigoplus_{j < 0}\mathbb H^1(C_j^\bullet)$. These always result in a $P$-bundle (because the fixed point has structure group $G_0$ and we only consider infinitesimal automorphisms which belong to $E(\lieg_{\le-1})$) and the deformed Higgs fields are obtained by adding terms with values in $\lieg_{\le 0}$ to $\varphi$.
	\end{proof}
	
	\begin{example}
		In the $G=\SL_n(\C)$ case of Example \ref{slnfixed}, where the grading arises from a splitting $\C^n = V_0 \oplus \dots \oplus V_{k-1}$, we have that
		$$P = \{A \in \SL\nolimits_n(\C) : A(V_j) \subseteq \bigoplus_{i \le j} V_i\},$$
		\noindent in other words, $P$ consists of the automorphisms of $\C^n$ that preserve the filtration
		$$0 \subseteq V_0 \subseteq V_0 \oplus V_1 \subseteq \dots \subseteq \C^n,$$
		\noindent and $\liep \oplus \lieg_1$ are the endomorphisms of degree one with respect to the filtration. Therefore, the upward flow of a fixed point $(E = E_0 \oplus \dots \oplus E_{k-1}, \varphi)$ where $\varphi(E_j) \subseteq E_{j+1} \otimes K_C$ is given by the Higgs bundles $(E',\varphi')$ admitting a filtration by subbundles
		$$0 \subseteq E'_0 \subseteq E'_1 \subseteq \dots \subseteq E'_{k-1} = E',$$
		\noindent with $\varphi'(E_j') \subseteq E_{j+1}'\otimes K_C$ and such that the associated graded $(\bigoplus_j E'_j/E'_{j-1},\overline{\varphi'})$ is isomorphic to $(E,\varphi)$. This is \cite[Proposition 3.4]{hausel_very_2022}.
	\end{example}
	
	In a completely analogous manner (but changing the signs appropriately) the next proposition follows.
	
	\begin{proposition}
		Let $(E,\varphi)$ be a smooth (stable and simple) $\C^\times$-fixed point corresponding to a $\Z$-grading of $\lieg$ with parabolic $\liep = \bigoplus_{j \ge 0}\lieg_j$. Let $P \subseteq G$ be the corresponding parabolic subgroup. Its downward flow consists of the Higgs bundles $(E',\varphi') \in \mathcal M(G)$ that reduce to $(P,\lieg_{\ge 1})$-Higgs pairs such that 
		$$\Gr(E',\varphi') := (\pi_0(E'), \varphi'_0) \simeq (E,\varphi).$$
	\end{proposition}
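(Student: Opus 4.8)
The plan is to mirror the proof of Proposition \ref{upwardflow}, reversing every sign: I would replace the limit $\lambda \to 0$ by $\lambda \to \infty$ and the parabolic $\bigoplus_{j \le 0}\lieg_j$ by $\liep = \bigoplus_{j \ge 0}\lieg_j$, whose unipotent radical is now the positive-degree part $\bigoplus_{j > 0}\lieg_j$. For the first inclusion I would reuse the interpolating map $\pi_\lambda(p) = \xi_\lambda^{-1}p\xi_\lambda$ verbatim, only noting that on a unipotent generator $\exp(X_\alpha)$ with $\alpha$ of degree $h = \alpha(\zeta) > 0$ we have $\pi_\lambda(\exp(X_\alpha)) = \exp(\lambda^{-h}X_\alpha)$, which now tends to $1_G$ as $\lambda \to \infty$ rather than as $\lambda \to 0$. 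Since $P$ preserves $\lieg_{\ge 1}$ (because $[\lieg_i,\lieg_j]\subseteq\lieg_{i+j}$ with $i\ge 0$, $j\ge 1$ forces $i+j\ge 1$), any $(P,\lieg_{\ge 1})$-Higgs pair $(E',\varphi')$ produces the family $(\pi_\lambda(E'),\varphi'_\lambda)\simeq(E',\lambda\varphi')$, whose limit as $\lambda\to\infty$ is the associated graded $\Gr(E',\varphi')=(\pi_0(E'),\varphi'_0)$, where the Levi projection $\pi_0$ is now the one realized in the limit $\lambda\to\infty$. Assuming $\Gr(E',\varphi')\simeq(E,\varphi)$, this exhibits $(E',\varphi')$ in the downward flow.

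For the reverse inclusion I would invoke Proposition \ref{bbflows}, which identifies the downward flow with the negative-weight subspace of $T_{(E,\varphi)}\mathcal M(G)$. The crucial input is already established inside the proof of Proposition \ref{upwardflow}: the tangent space splits as $\bigoplus_{j\in\Z}\mathbb H^1(C_j^\bullet)$ and $\lambda\in\C^\times$ acts on $\mathbb H^1(C_j^\bullet)$ with weight $-j$. This weight computation depends only on the $\C^\times$-action at the fixed point, not on the chosen parabolic, so it transfers unchanged. Negative weight then means $-j<0$, i.e. $j>0$, and the downward flow is identified with $\bigoplus_{j>0}\mathbb H^1(C_j^\bullet)$. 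A deformation in $\mathbb H^1(C_j^\bullet)$ with $j>0$ glues via \v Cech sections of $E(\lieg_j)\subseteq E(\lieg_{\ge 1})$, hence stays inside $P$ and yields a reduction to $P$; and it deforms the Higgs field by a term in $E(\lieg_{j+1})\otimes K_C\subseteq E(\lieg_{\ge 2})\otimes K_C$, so that the total Higgs field lies in $\lieg_1\oplus\lieg_{\ge 2}=\lieg_{\ge 1}$. This is precisely the datum of a $(P,\lieg_{\ge 1})$-Higgs pair with $\Gr\simeq(E,\varphi)$, matching the first inclusion.

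There is no genuine new obstacle here: the entire content is already contained in the proof of Proposition \ref{upwardflow}, and the only thing requiring care is the bookkeeping of signs. The single point worth checking explicitly is that reversing the limit to $\lambda\to\infty$ is consistent with selecting the negative-weight subspace---equivalently, that the degree-$j$ pieces contributing to the downward flow are those with $j>0$ rather than $j<0$---and that with this convention the map $\pi_\lambda$ still projects onto the same Levi $G_0$ in the relevant limit, so that the associated-graded construction defining $\Gr$ is literally the one used for the upward flow. Once these sign conventions are fixed, the two inclusions close up exactly as above.
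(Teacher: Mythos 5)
Your proposal is correct and is exactly what the paper intends: the paper's entire ``proof'' of this proposition is the remark that it follows from Proposition \ref{upwardflow} ``in a completely analogous manner (but changing the signs appropriately),'' and your write-up carries out precisely that sign-reversal --- the limit $\lambda\to\infty$, the unipotent radical in positive degrees, and the identification of the downward flow with the negative-weight subspace $\bigoplus_{j>0}\mathbb H^1(C_j^\bullet)$ via the weight-$(-j)$ computation, which indeed transfers unchanged. No gaps; your bookkeeping of which degrees contribute ($j>0$ for the bundle gluing, $j+1\ge 2$ for the Higgs field deformation, so the total field stays in $\lieg_{\ge 1}$) matches what the paper's analogy requires.
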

	
	\begin{example}\label{uniformising}
		A key example of very stable Higgs bundle is the \textbf{canonical uniformising Higgs bundle}. We first recall \cite[Example 4.4]{biquard_arakelov-milnor_2021} its construction. Consider the regular nilpotent element $e:= \sum_{i=1}^rX_{\alpha_i}$. By the Jacobson--Morozov theorem, it can be completed to an $\mathfrak {sl}_2$-triple. More precisely, we can set $h:=2\sum_{i=1}^r \omega_i^\vee$ and there exists $f \in \lieg$ such that $(h,e,f)$ spans a copy of $\liesl_2(\C)$. Let $S \le G$ be the connected subgroup with Lie algebra $\gen{e,f,g} \subseteq \mathfrak g$, which can be isomorphic to $\PSL_2(\mathbb C)$ or $\SL_2(\mathbb C)$ depending on $G$ and the element $e$. Let $T \le S$ be the connected subgroup with Lie algebra $\gen{h}$. There are two cases:
		
		\begin{itemize}
			\item If $S \simeq \SL_2(\mathbb C)$, it is simply connected and thus the representation $\gen{h} \to \mathfrak{gl}(\gen{e})$ given by $\lambda h \mapsto \ad(\lambda h)$ lifts to a representation $\mathbb C^\times \simeq T \to \GL(\gen{e})$. As $[h,e] = 2e$, the $\mathbb C^\times$-action we get on $\gen{e}$ via this lift is $\lambda \cdot e = \lambda^{2}e$. Choose a square root $K_C^{\frac{-1}{2}}$ (this can be done as $\deg K_C = 2g-2$ is even) and let $E_T$ be the frame bundle for $K_C^{\frac{-1}{2}}$, in other words, the $\mathbb C^\times$-bundle such that the bundle associated to the standard representation of $\mathbb C^\times$ in $\mathbb C$ is $E_T(\mathbb C) \simeq K_C^{\frac{-1}{2}}$. Using the isomorphism $\mathbb C^\times \simeq T$ we have a $T$-bundle, and since $T$ acts on $\gen{e}$ with weight $2$ we have $E_T(\gen{e}) \simeq (K_C^{\frac{-1}{2}})^{2} = K_C^{-1}$. This means that $E_T(\gen{e}) \otimes K_C \simeq \mathcal O_C$ so we can define a constant section $e \in H^0(C, E_T(\gen{e}) \otimes K_C)$. Extending the structure group gives $(E_T(G),e)$ a $G$-Higgs bundle.
			
			\item If $S \simeq \PSL_2(\mathbb C)$, we can take its universal cover $\SL_2(\mathbb C) \to S$ which is of degree two. The torus $T \subseteq S$ lifts to $\hat{T} \subseteq \SL_2(\mathbb C)$. We have that $\hat{T}$ is a double cover of $T$ and there are isomorphisms with $\mathbb C^\times$ such that map $\hat{T} \simeq \mathbb C^\times \to T \simeq \mathbb C^\times$ is given by $\lambda \mapsto \lambda^2$. By the previous argument the adjoint action of $\hat{T} \simeq \mathbb C^*$ on $\gen{e}$ is given by $\lambda \cdot e = \lambda^{2} e$, so that it descends to $T$ as $\lambda \cdot e = \lambda e$. Now we let $E_T$ be the frame bundle of $K_C^{-1}$. The associated bundle is $E_T(\gen{e}) \simeq K_C^{-1}$ and hence $E_T(\gen{e}) \otimes K_C \simeq \mathcal O$. Thus $e$ defines a section of $E_T(\gen{e}) \otimes K_C$. Extending the structure group gives $(E_T(G),e)$ a $G$-Higgs bundle.
		\end{itemize} 
		
		By construction, in any case, the resulting $G$-Higgs bundle which we denote $(E,\varphi_0)$ reduces to a $(G_0,\lieg_1)$-Higgs pair of Borel type for the grading element $\zeta = \frac{h}{2} = \sum_{i=1}^r\omega_i^\vee$. Also note from the construction that its multiplicity vector (see Definition \ref{multvect}) equals zero. 
		
		We now claim that $(E,\varphi)$ is very stable. We first construct elements on its upward flow. This construction is due to Hitchin \cite[Section 5]{hitchin_lie-groups_1992} and produces a \textbf{Hitchin section}, that is, a section of $h_G$. We remark that, when $G$ is a classical group, this is different from the section constructed via companion matrices in \cite[Section 3]{hitchin_lie-groups_1992}: see \cite[Remark in Section 3]{hitchin_higgs_2013} and \cite{hameister_companion_2024}.
		
		Let $a = (a_1,\dots,a_r) \in \mathcal A(G)$ be a point in the Hitchin base and decompose $\lieg$ into irreducible representations of the $\liesl_2(\C)$ representation induced by $(h,e,f)$, getting
		$$\lieg = \bigoplus_{i=1}^r V_i.$$
		There are $r$ summands, and they can be ordered so that $\dim V_i = 2\deg p_i - 1$ where the $p_i \in \C[\lieg]^G$ are the chosen basis elements of invariant polynomials for the construction of $h_G$. Thus, if $f_i \in V_i$ is a lowest weight vector, we have that $[h,f_i] = (-2\deg p_i + 2) f_i$, and hence $E_T(\gen{f_i}) \otimes K_C \simeq K_C^{\deg p_i - 1} \otimes K_C \simeq K_C^{\deg p_i}$. This means that we have a well-defined constant section $f_i \in H^0(C,E_T(\gen{f_i}) \otimes K_C \otimes K_C^{-\deg p_i})$ and hence $a_if_i \in H^0(C, E(\lieg) \otimes K_C)$. This results in a well defined Higgs field
		$$\varphi_a := \varphi_0 + \sum_{i=1}^ra_if_i.$$
		
	 	The $f_i$ can be selected so that $h_G(E,\varphi_a) = a$. We also have that $(E,\varphi_a)$ are smooth points \cite[Section 5]{hitchin_lie-groups_1992} in $\mathcal M(G)$. By the $\C^\times$-equivariance of $h_G$, or directly from the construction and Proposition \ref{upwardflow}, we get the subvariety
	 	$$\mathcal A(G) \simeq \{(E,\varphi_a) : a \in \mathcal A(G)\} \subseteq W^+_{(E,\varphi_0)}.$$
		
		However, both sides are vector spaces of the same dimension, hence equality holds. It then follows from this explicit description of the upward flow that $(E,\varphi_0)$ is very stable.
	\end{example}
	
	\section{The affine Grassmannian and Hecke transformations}\label{sechecke}
	
	\subsection{Definitions and properties}
	One of the main techniques in the classification of very stable Higgs bundles of Borel type is, as in the $G=\GL_n(\C)$ case \cite{hausel_very_2022}, that of \textit{Hecke transformations}. These modify a Higgs bundle locally around a point to obtain a family of different Higgs bundles in the moduli space which are isomorphic to the starting one away from that point.
	
	The correct setting to study Hecke transformations, which allows to work with different groups $G$ as well as to perform more types of transformations (even in the $\GL_n(\C)$ case), is that of the \textit{affine Grassmannian}. In this section we start by recalling its definition and main properties that we will need for later use. Our main reference is \cite{zhu_introduction_2016}.
	
	We will define the affine Grassmannian via its functor of points, which is the natural point of view for Hecke transformations. The functor corresponds to the moduli problem of bundles on a disk isomorphic away from its centre. Let $D := \Spec \C[[z]]$ be the disk and $D^* := \Spec \C((z))$ be the punctured disk. Given a $\C$-algebra $R$, we can also consider the base changes $D_R := \Spec R[[z]]$ and $D_R^* := \Spec R((z))$ seen as families over $\Spec R$.
	
	\begin{definition}
		The \textbf{affine Grassmannian} $\Gr_G$ is defined by its $R$-points being pairs $(E,\beta)$ where $E$ is a principal $G$-bundle over $D_R$ and $\beta : E|_{D_R^*} \xrightarrow{\sim} D_R^* \times G$ is a trivialisation over $D_R^*$.
	\end{definition}
	
	The above functor of points defines an ind-projective ind-scheme \cite[Theorem 1.2.2]{zhu_introduction_2016}. Since we are only interested in the field of complex numbers, we may also regard it as an infinite-dimensional complex-analytic space.
	
	Another well-known definition is in terms of the \textit{loop groups}:

	\begin{definition}
		The \textbf{loop group} $LG$ and \textbf{positive loop group} $L^+G$ are given by their $R$-points being $G(\Spec R((z)))$ and $G(\Spec R[[z]])$, respectively.
	\end{definition}	
	
	We will sometimes denote $G((z)) := LG(\C)$ and $G[[z]] := L^+G(\C)$. We have \cite[Proposition 1.3.6]{zhu_introduction_2016} that $\Gr_G$ is isomorphic to the quotient $LG/L^+G$. Indeed, given an $R$-point $(E,\beta)$ and fixing a trivialisation over the entire disk $\varepsilon: D_R \times G \xrightarrow{\sim} E$ (this always exists locally on $R$), the map $\beta \circ \varepsilon$ is an automorphism of the trivial $G$-bundle on $D^*_R$, in other words an element $\sigma$ of $LG(R)$. Conversely such a loop $\sigma$ defines the point $(D_R \times G, \sigma)$ in the affine Grassmannian. Fixing from the start a different trivialisation $\varepsilon'$ changes everything by $(\varepsilon')^{-1} \circ \varepsilon$ which is a positive loop in $L^+G(R)$.
	
	Now fix a maximal torus and Borel subgroup $T \subseteq B \subseteq G$ with Weyl group $W_G = N_G(T)/T$. Given a cocharacter $\mu \in X_*(T) = \Hom(\mathbb G_m, T)$, it defines a map $\C((z))^{\times} \to LT(\C) \subseteq LG(\C)$. The image of $z$ defines a loop which we denote $z^\mu \in LG(\C)$, and hence an element of $\Gr_G$ which we denote similarly. Denoting by $X_*^+(T) \simeq X_*(T)/W_G$ the dominant coweights given by the choice of $B$, we have the \textbf{Cartan decomposition} due to Iwahori and Matsumoto \cite{iwahori_bruhat_1965}:
	$$G((z)) = \bigsqcup_{\lambda \in X^+_*(T)}G[[z]]z^{\lambda}G[[z]],$$
	\noindent whence we can identify
	$$\Gr_G(\C) = G((z))/G[[z]] = \bigsqcup_{\lambda \in X^+_*(T)}G[[z]]z^{\lambda}.$$
	
	Recall that there is a partial ordering in $X_*^+(T)$ given by $\lambda \le \mu$ if and only if $\mu - \lambda$ is a sum of simple coroots. Minimal elements under this ordering are called \textbf{minuscule}.
	
	\begin{definition}
		Given $\mu \in X_*^+(T)$, the $\Gr_\mu := G[[z]]z^\lambda$ in the decomposition above are called \textbf{(affine) Schubert cells}. The unions
		$$\Gr_{\le \mu} := \bigcup_{\lambda \le \mu}\Gr_{\lambda}$$ are called \textbf{(affine) Schubert varieties}.
	\end{definition}
	
	\begin{remark}
		 We may refer to Schubert cells and varieties for any coweight $\lambda \in X_*(T)$, meaning by this the corresponding notion for the dominant element in its Weyl group orbit. The Schubert cells may still be given by $G[[z]]z^{\lambda}$: if $w \in W_G$ is such that $w(\lambda)$ is dominant, and it is represented by $g \in N_G(T)$, then $z^{\lambda} = g^{-1}z^{w(\lambda)}g \in Gz^{w(\lambda)} \subseteq G[[z]]z^{w(\lambda)}$.
	\end{remark}
	
	Affine Schubert cells are smooth quasi-projective varieties whose closure is the corresponding affine Schubert variety, hence the latter are projective. The fact that Schubert cells are not closed (unless they correspond to a minuscule cocharacter) will play a key role in the classification of very stable $G$-Higgs bundles. Thus we will need the following statement \cite[Proposition 2.1.5]{zhu_introduction_2016} and its proof, explicitly.
	
	\begin{lemma}\label{grassmanniancurve}
		Let $\mu \in X_*(T)$ be a cocharacter and $\alpha^\vee \in \Phi^\vee_+$ a positive coroot with corresponding positive root $\alpha \in \Phi^+$. Assume that $\alpha(\mu) - 1 \ge 0$. Then, there exists a curve $C_{\mu, \alpha} \subseteq \Gr_G$, isomorphic to $\mathbb P^1_\C$, such that $C_{\mu,\alpha} \cap \Gr_{\mu-\alpha} = \{z^{\mu-\alpha}\}$ and the remaining $C_{\mu,\alpha} \setminus \{z^{\mu-\alpha}\} \simeq \mathbb A^1_\C$ is contained in $\Gr_{\mu}$.
	\end{lemma}
	
	\begin{proof}
		Let $m := \alpha(\mu) - 1 \ge 0$ and $i_\alpha : \SL_2 \hookrightarrow G$ the corresponding inclusion of the $\SL_2$-subgroup for $\alpha$. The latter naturally induces $Li_\alpha : L\SL_2 \to LG$. Consider the following subgroup isomorphic to $\SL_2$:
		\[K_{m} := \left\{\begin{pmatrix}
			a & bz^m \\
			cz^{-m} & d
		\end{pmatrix} : a,b,c,d \in \C,\,ad-bc = 1\right\} \subseteq L\SL\nolimits_2.\]
		The desired curve is $Li_\alpha(K_m)z^\mu$. The claims can be checked by considering the subgroup $K_m \cap L^+\SL_2$ given by $c=0$. By multiplication with a diagonal element in $\SL_2$ (which commutes with $\mu$ and vanishes in the quotient $\Gr_G$) we get the affine part $Li_\alpha\begin{pmatrix}
			1 & bz^m \\ 0 & 1
		\end{pmatrix}z^\mu \simeq \mathbb A^1$, contained in $Li_\alpha(\SL_2[[z]])z^\mu \subseteq G[[z]]z^\mu$. On the other hand, the remaining point can be computed \cite[Proposition 2.1.5]{zhu_introduction_2016} to be $Li_\alpha\begin{pmatrix}
			0 & -z^m \\
			z^{-m} & 0
		\end{pmatrix}z^\mu = z^{\mu-\alpha^\vee}$.
	\end{proof}

	Since we are interested in $G$-Higgs bundles, we now bring Higgs fields into the picture of the affine Grassmannian.  Consider $(E,\beta)$ a principal $G$-bundle on the disk $D$ with a trivialisation $\beta: E|_{D^*} \xrightarrow{\sim} D^* \times G$. If $E$ is now endowed with an untwisted Higgs field $\varphi \in H^0(D, E(\lieg))$, via the trivialisation we obtain an element $X \in H^0(D^*, (D^* \times G)(\lieg)) = \Hom(D^*,\lieg) \simeq \lieg \otimes_{\C} \C((z)) =: \lieg((z)).$
	
	\begin{definition}
		Let $X \in \lieg((z))$. The moduli space of pairs $((E,\varphi),\beta)$ consisting of a Higgs bundle $(E,\varphi)$ on $D$ and a trivialisation $\beta : (E,\varphi)|_{D^*} \xrightarrow{\sim} (D^* \times G, X)$ is called the \textbf{affine Springer fibre} of $X$ and denoted by $\Gr^X_G$.
	\end{definition}
	
	Note that we can map the affine Springer fibre as a subspace of the affine Grassmannian by forgetting the Higgs field. The image are the pairs $(E,\beta)$ of principal $G$-bundle and trivialisation such that $\beta^*X \in H^0(D^*, E(\lieg))$ extends to a Higgs bundle $\varphi$ over the whole disk $D$. It is then clear that under the loop group interpretation of $\Gr_G$ we have
	\[\Gr^X_G = \left\{[\sigma] \in \Gr_G : \Ad_{\sigma^{-1}}X \in \lieg[[z]]\right\} \subseteq \Gr_G.\]

	\subsection{Hecke transformations}\label{heckeprin}
	
	In this section we define the action of the affine Springer fibres on Higgs bundles via Hecke transformations. We start by considering the case of principal $G$-bundles, developed in \cite{wong_hecke_2013}. Fix a point $c \in C$ and let $C_0 := C \setminus \{c\}$ be its complement.
	
	\begin{definition}
		A \textbf{Hecke transformation} of a principal $G$-bundle $E$ at $c \in C$ is a pair $(E',\psi)$ where $E'$ is another principal $G$-bundle over $C$ and
		$$\psi : E'|_{C_0} \xrightarrow{\sim} E|_{C_0}$$
		\noindent is an isomorphism.
	\end{definition}
	
	\begin{definition}
		Two Hecke transformations $(E',\psi')$, $(E'',\psi'')$ are said to be \textbf{equivalent} if there is an isomorphism $\alpha : E' \to E''$ whose restriction to $C_0$ verifies $\psi'' \circ \alpha = \psi'$.
	\end{definition}
	
	Hecke transformations for principal $G$-bundles are strongly related to $LG$ and $\Gr_G$. In order to see this, let $(E',\varphi')$ be a Hecke transformation for $E$ and fix a disk neighbourhood $C_1 \subseteq C$ around $c$ and trivialisations $t_1: E|_{C_1} \to C_1 \times G$ for $E$ and $t_1'$ for $E'$. Fix also trivialisations $t_0, t_0'$ over $C_0$, which exist since $G$ is connected and semisimple (cf. \cite{harder_halbeinfache_1967}).

	Let $C_{01} := C_0 \cap C_1$. Via the previous trivialisations, we obtain transition functions
	$$f_E, f_E' : C_{01} \to G,$$
	\noindent defined such that $t_0 \circ t_1^{-1}(p,g) = (p, f_E(p)g)$ for $p \in C_{01}$, and analogously for $E'$. From the fact that $t_1 \circ \psi|_{C_{01}} \circ (t_1')^{-1}$ is an automorphism of the trivial $G$-bundle over $C_{01}$, we obtain a section
	$$\sigma : C_{01} \to G,$$
	\noindent which can be regarded as an element of $LG$ for a suitable choice of $C_1$ (a disk). Notice that the defining property for $\sigma$ is that
	$$f_{E'} = f_E\cdot\sigma$$
	\noindent over the punctured neighbourhood $C_{01}$.
	
	Conversely, given the trivialisation $\{(C_0,t_0), (C_1, t_1)\}$ for $E$ as above, a choice of $\sigma : C_{01} \to G$ produces a Hecke transformation $(E',\psi)$ where $E'$ is given by the transition function $f_{E'} := f_E \cdot \sigma$. If $(C_i,t_i')$ are trivialisations of $E'$ with said transition function then we recover $\psi$ as $t_0^{-1} \circ t_0'$.
	
	Lastly, if we fix $E$ and its trivialisation $\{(C_0,t_0), (C_1, t_1)\}$ as above, we want to investigate when two Hecke transformations $\sigma',\sigma'' : C_{01} \to G$ are equivalent. If this is the case, we can choose trivialisations $t_i', t_i''$ such that $f_{E'} = f_E\sigma$, $f_{E''} = f_E\sigma''$, $t_0'=t_0''$ and $t_1' = \beta t_1''$ for $\beta : C_{1} \to G$. This implies that
	$$\sigma'' = \sigma'\beta,$$
	\noindent so that both loops $\sigma',\sigma''$ differ by a positive loop $\beta$. We thus conclude the following as a result of the discussion throughout the section.
	
	\begin{proposition}[{\cite[Section 1.4]{wong_hecke_2013}}]
		Let $c \in C$. Let $E$ be a principal $G$-bundle with a fixed trivialisation $\{(C_i,t_i)\}$ over $C_0 := C \setminus c$ and a disk neighbourhood $C_1$ of $c$. The space of Hecke transformations at $c$ of $E$ is in correspondence with the affine Grassmannian $\Gr_G = LG/L^+G$.
	\end{proposition}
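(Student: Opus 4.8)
The plan is to construct an explicit bijection between equivalence classes of Hecke transformations of $E$ at $c$ and points of $\Gr_G = LG/L^+G$, by assembling the three constructions already laid out in the discussion preceding the statement. Concretely, I would (i) attach to a Hecke transformation $(E',\psi)$ a loop $\sigma \in LG$, (ii) attach to a loop a Hecke transformation, and (iii) show that the equivalence relation on Hecke transformations corresponds exactly to right multiplication by $L^+G$, so that equivalence classes are precisely the $L^+G$-cosets in $LG$.

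For the forward map, I would keep fixed once and for all the trivialisations $t_0$ of $E$ over $C_0$ and $t_1$ of $E$ over the disk $C_1$, which exist because a principal $G$-bundle for $G$ connected semisimple is trivial over the affine curve $C_0$ (cf. \cite{harder_halbeinfache_1967}) and over a disk. These determine the transition function $f_E : C_{01} \to G$ on $C_{01} = C_0 \cap C_1$. Given a Hecke transformation $(E',\psi)$, I would use $\psi$ together with $t_0$ to fix the canonical trivialisation $t_0' := t_0 \circ \psi$ of $E'$ over $C_0$ and then choose any trivialisation $t_1'$ of $E'$ over $C_1$; the automorphism $t_1 \circ \psi|_{C_{01}} \circ (t_1')^{-1}$ of the trivial bundle over $C_{01}$ is a section $\sigma : C_{01} \to G$, that is an element of $LG$ once $C_1$ is a genuine disk, and it satisfies $f_{E'} = f_E \cdot \sigma$. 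The inverse map is the converse construction: a loop $\sigma \in LG$ defines a $G$-bundle $E'$ with transition function $f_E \cdot \sigma$ together with the gluing isomorphism $\psi = t_0^{-1} \circ t_0'$ away from $c$, giving back a Hecke transformation.

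It then remains to match the equivalence relations. Here I would check that changing the chosen trivialisation $t_1'$ of $E'$ over the disk by some $\beta : C_1 \to G$ multiplies $\sigma$ on the right by $\beta$, which is a positive loop in $L^+G$ since it extends holomorphically across $c$; and conversely, that an equivalence $\alpha : E' \to E''$ with $\psi'' \circ \alpha = \psi'$ forces the canonical trivialisations over $C_0$ to agree, $t_0' = t_0''$, so that $\sigma'$ and $\sigma''$ differ only by the disk discrepancy $\beta \in L^+G$, i.e. $\sigma'' = \sigma' \beta$. Together these show that the class of $\sigma$ in $LG/L^+G$ is a complete and well-defined invariant of the equivalence class, and hence that the assignment descends to the desired bijection with $\Gr_G = LG/L^+G$.

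The main difficulty I expect is bookkeeping rather than conceptual: one must track carefully which trivialisations are fixed (those of $E$ over $C_0$ and $C_1$, and the canonical $t_0' = t_0 \circ \psi$) and which are allowed to vary (the disk trivialisation $t_1'$), and verify that the residual freedom over the disk is exactly $L^+G$ while the freedom over $C_0$ introduces no identifications beyond the stipulated equivalence of Hecke transformations. A minor but necessary point to state explicitly is that $C_1$ must be shrunk to an analytic disk so that $\sigma$, a priori only a holomorphic map on the punctured neighbourhood $C_{01}$, genuinely defines an element of $LG = G(\!(z)\!)$.
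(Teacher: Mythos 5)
Your proposal is correct and follows essentially the same route as the paper: extracting a loop $\sigma$ with $f_{E'} = f_E\cdot\sigma$ from the trivialisations, inverting the construction by declaring $f_E\cdot\sigma$ to be a transition function, and identifying the residual freedom (the choice of disk trivialisation, respectively an equivalence of Hecke transformations) with right multiplication by $L^+G$. Your only deviation is the harmless normalisation $t_0' := t_0\circ\psi$, which the paper achieves implicitly when it chooses $t_0' = t_0''$ in the equivalence analysis.
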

	
	\begin{remark}
		The identification is not canonical as it depends on the fixed trivialisation for $E$. When using Hecke transformations, we will always have such a trivialisation fixed beforehand so that we can work directly with the affine Grassmannian. 
		
		Moreover, a change of trivialisation results in a left multiplication by a positive loop $\gamma : C_1 \to G$, so that at any rate the $L^+G$-orbit of the Hecke transformation is well defined. In particular, we can associate an invariant $[(E',\psi)] \in X^+_*(T)$ to the Hecke transformations, defined so that the corresponding $\sigma$ lies in the Schubert cell $\Gr_{[(E',\psi)]}$. This is known as the \textbf{type} of the transformation.
	\end{remark} 
	
	\begin{example}
		Consider $G=\PGL_n(\C)$ and let $E := \mathcal O_C \oplus K_C^{-1}$. Fix a trivialisation as above which splits as a direct sum of trivialisations for each of the two line bundles. Let $\sigma = z^{\omega_1^\vee}$ be the first fundamental cocharacter, which can be seen as
		$$z \mapsto \begin{pmatrix}
			z & 0 \\
			0 & 1
		\end{pmatrix}$$
		\noindent after a choice of uniformiser $z$ of the disk $C_1$. Taking the constant map $z \mapsto 1$ as a transition function valued in $\C^\times$ produces the trivial line bundle $\mathcal O_C$, and taking the identity $z \mapsto z$ as a transition function from $C_1$ to $C_0$ corresponds to the line bundle $\mathcal O_C(-c)$. Therefore, the resulting Hecke transformation is isomorphic to $E' = \mathcal O_C(-c) \oplus K_C^{-1}$, where $\psi$ is the sum of the identity on the $K_C^{-1}$ factor and the isomorphism $\mathcal O_C(-c)|_{C \setminus c} \simeq \mathcal O_{C \setminus c}$ given by the induced  trivialisation on $\mathcal O_C(-c)$.
	\end{example}
	
	\begin{remark}\label{toptype}
		It is worth noting that, as it is clear from the previous example, Hecke transformations do not preserve the topological invariant $\varepsilon(E) \in \pi_1(G)$ of the bundle. In fact, we have \cite[Section 1.3.1]{wong_hecke_2013}
		$$\varepsilon(E') = \varepsilon(E) + [\sigma],$$
		\noindent where $[\sigma] \in X_*^+(T)$ is the Schubert type of the transformation seen as an element in $\pi_1(G)$.
	\end{remark}
	
	\begin{remark}\label{minusculeflagvar}
		When $G=\GL_n(\C), \SL_n(\C), \PGL_n(\C)$, there is a well known notion of Hecke transformation (which is used, for example, in the classification of very stable $\GL_n(\C)$-Higgs bundles of type $(1,1,\dots,1)$ \cite[Section 4.2]{hausel_very_2022}) involving choosing a vector subspace of the fibre $E|_c$ and performing some sheaf operations. This can be explained as follows in our setting. Whenever $\omega_i^\vee \in X_*^+(T)$ is a minuscule cocharacter (recall that this means minimal with respect to the partial ordering on the space of dominant coweights or, equivalently, such that the Weyl group acts transitively on the weights of the irreducible representation $\rho_i : G^\vee \to \GL(V_i)$; this automatically implies that it is fundamental), the map
		$$\Gr_{z^{\omega_i^\vee}} = L^+Gz^{\omega_i^\vee}= L^+G/(L^+G \cap z^{-\omega_i^\vee}(L^+G)z^{\omega_i^\vee}) \to G/P_i,$$
		\noindent given by evaluation at the centre of the disk, where $P_i \subseteq G$ is the maximal parabolic associated to $\omega^\vee_i$, is an isomorphism (see \cite[Example 4.12]{hurtubise_elliptic_2003} or \cite[Lemma 2.1.13]{zhu_introduction_2016}). 
		
		Therefore, Hecke transformations of minuscule type are given by elements in the generalised flag varieties $G/P_i$. In particular, in the case of Dynkin type $A$, every $\omega_i^\vee$ is minuscule, $G/P_i$ is isomorphic to a Grasmannian in $E|_c$ and hence we get Hecke transformations for each subspace of $E|_c$ of any dimension. This notion agrees with the standard one.
	\end{remark}
	
		Now we wish to perform Hecke transformations for $G$-Higgs bundles. They are defined similarly as follows.
	
	\begin{definition}
		A \textbf{Hecke transformation} of a $G$-Higgs bundle $(E,\varphi)$ at $c \in C$ is a tuple $(E', \varphi', \psi)$ where $(E',\varphi')$ is another $G$-Higgs bundle over $C$ and $\psi$ is a Higgs bundle isomorphism of their restrictions to $C_0$.
		
		Two Hecke transformations are equivalent if there is a Higgs bundle isomorphism between them which, when restricted to $C_0$, is compatible with the isomorphisms to $E|_{C_0}$.
	\end{definition}
	
	There is a forgetful functor from the Hecke transformations of $(E,\varphi)$ to those of $E$, so the space of Hecke transformations for the Higgs bundle should be a subspace of $\Gr_G$ which we identify now.
	
	We fix a trivialisation $\{(C_0,t_0)$, $(C_1,t_1)\}$ for $E$ as before and a local uniformiser $z$ in $C_1$. This gives an identification of the space of Hecke transformations of $E$ with $\Gr_G$. We want the Higgs field $\varphi \in H^0(E(\lieg) \otimes K_C)$ to induce a Higgs field $\varphi' \in H^0(E'(\lieg) \otimes K_C)$ on $E'$. 
	
	Over $C_0$, using the trivialisation, we can regard the Higgs field as a regular function $\varphi_0 : C_0 \to \lieg$. Indeed, under the identification $E|_{C_0} = C_0 \times G$, the image point of$\varphi$ at $p \in C_0$ can be written as an element of $E\times_{\Ad}\lieg$ as $[(p,g),\Ad_{g^{-1}}\varphi_0(p)]$. We define $\varphi_0' := \varphi_0 : C_0 \to \lieg$. The reason is that the Hecke transformation $\psi$ given by $\sigma$ equals the identity on the chosen trivialisations for $E$ and the induced trivialisation for $E'$, so this is the only choice preserving the Higgs field.
	
	Over $C_1$ we consider the regular function $\varphi_1 : C_1 \to \lieg$, defined as above over the trivialisation $C_1$. We define $\varphi_{01}' : C_{01} \to \lieg$ by the rule
	$$\varphi_{01}' := \Ad_{\sigma^{-1}}\varphi_1|_{C_{01}},$$
	\noindent where $\sigma \in \Gr_G$ is the loop giving the Hecke transformation $E'$. This gives a rational function $\varphi_{01}'$  from $C_1$ to $\lieg$ with $c$ as its only (possible) pole. Thus $\varphi_{01}' \in \lieg((z))$. We now introduce the extra assumption
	$$\Ad_{\sigma^{-1}}\varphi_1 \in L^+\lieg$$
	\noindent that is, the assumption that $\varphi_{01}'$ can be extended at the puncture to define a regular map $\varphi_1' : C_1 \to \lieg$.
	
	By construction it is clear that the data $\{(C_0,t_0',\varphi_0'), (C_1, t_1', \varphi_1')\}$ defines a Higgs field on $E'$. Here the $t_j'$ are the trivialisation functions for $E'$ which verify $t_0' \circ (t_1')^{-1} = (t_0 \circ t_1^{-1}) \cdot \sigma$. Indeed, using $G$-equivariance to regard $\varphi_0'$ and $\varphi_1'$ as functions over $C_{01} \times G$, and denoting by $R_g$ the right action by $g$ on the bundle, we compute over $C_{01}$ that
	\begin{align*}
		(t'_0)^*\varphi_0' &= (t'_0)^*\varphi_0 = (R_\sigma \circ t_0 \circ t_1^{-1} \circ t_1')^*\varphi_0  \\&= (t_1^{-1} \circ t_1')^*t_0^*\Ad_{\sigma^{-1}} \varphi_0 =(t_1^{-1} \circ t_1')^*t_1^*\Ad_{\sigma^{-1}} \varphi_1 = (t_1')^*\varphi'_1,
	\end{align*}

	\noindent where we used the $G$-equivariance of $\varphi_0$ and the fact that $t_0^*\varphi_0 = t_1^*\varphi_1$ over $C_{01}$.
	
	Furthermore, this calculation shows that if $(E',\varphi')$ is a Hecke transformation of $(E,\varphi)$ given by $\sigma$ in the fixed trivialisations, we must have that $\varphi_1'|_{C_{01}} = \Ad_{\sigma^{-1}}\varphi_1$ so that $\Ad_{\sigma^{-1}}\varphi_1$ has to be a positive loop. In other words:
	
	\begin{proposition}
		The space of Hecke transformations at $c$ of $(E,\varphi)$ is isomorphic to the affine Springer fibre
		$$\Gr_G^{(\varphi_1)_c} = \{\sigma \in \Gr_G : (\Ad_{\sigma^{-1}}\varphi_1)_c \in L^+\lieg\} \subseteq \Gr_G,$$
		\noindent where the subscript $c$ means the germ at $c$ seen as an element of $L\lieg$ via the fixed trivialisation.
	\end{proposition}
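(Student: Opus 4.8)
The plan is to assemble the facts established in the discussion preceding the statement into a bijection, and then to promote it to an isomorphism of spaces. The underlying principle is that the forgetful operation $(E',\varphi',\psi) \mapsto (E',\psi)$ sends a Hecke transformation of the Higgs bundle to one of the principal bundle, and by the preceding Proposition the latter corresponds, via the fixed trivialisation $\{(C_0,t_0),(C_1,t_1)\}$, to a well-defined class $\sigma \in \Gr_G$. Thus I obtain a map from Higgs Hecke transformations to $\Gr_G$, and the content to prove is that its image is exactly $\Gr_G^{(\varphi_1)_c}$ and that it is injective.

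First I would show that the image lies in the affine Springer fibre and that the map is injective. Given a Higgs Hecke transformation with underlying class $\sigma$, the requirement that $\psi$ preserve the Higgs field over $C_0$ forces $\varphi_0' = \varphi_0$, and the $G$-equivariance computation carried out above then yields $\varphi_1'|_{C_{01}} = \Ad_{\sigma^{-1}}\varphi_1$. Regularity of $\varphi'$ across $c$ is precisely the condition $(\Ad_{\sigma^{-1}}\varphi_1)_c \in L^+\lieg$, so $\sigma \in \Gr_G^{(\varphi_1)_c}$. Moreover $\varphi'$ is then uniquely determined by $\sigma$ through its local data $\varphi_0', \varphi_1'$, which gives injectivity. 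Conversely, for surjectivity onto the Springer fibre I would take $\sigma \in \Gr_G^{(\varphi_1)_c}$ and exhibit the transformation explicitly: set $\varphi_0' := \varphi_0$ over $C_0$ and let $\varphi_1'$ be the regular extension across $c$ of $\Ad_{\sigma^{-1}}\varphi_1$, which exists exactly by the positivity assumption. The equivariance identity shown above guarantees $(t_0')^*\varphi_0' = (t_1')^*\varphi_1'$ over $C_{01}$, so these pieces glue to a global $\varphi' \in H^0(C, E'(\lieg)\otimes K_C)$, and $\psi = t_0^{-1}\circ t_0'$ is an isomorphism of Higgs bundles over $C_0$ by construction.

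Next I would check compatibility with the equivalence relations: by the preceding Proposition two underlying transformations agree in $\Gr_G$ if and only if they differ by a positive loop $\beta \in L^+G$, and any such $\beta$ is regular at $c$ and hence automatically a Higgs-bundle isomorphism. Therefore the equivalence of Higgs Hecke transformations matches passage to the quotient defining $\Gr_G$, and the induced bijection lands onto the subspace $\Gr_G^{(\varphi_1)_c} \subseteq \Gr_G$.

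The main obstacle is to upgrade this bijection of $\C$-points to an isomorphism of spaces (ind-schemes, or complex-analytic spaces). For this I would rerun the entire construction with the disk $D$ replaced by $D_R$ over an arbitrary test algebra $R$: the gluing data, the equivariance computation, and the positivity condition $\Ad_{\sigma^{-1}}\varphi_1 \in L^+\lieg$ are all local and make sense verbatim in families, so the map and its inverse are morphisms of functors. The only genuinely substantive point is that the positivity condition defines the affine Springer fibre as a subfunctor with its given structure, which is built into the definition of $\Gr_G^{(\varphi_1)_c}$; the remaining compatibilities are then formal.
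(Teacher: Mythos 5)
Your proof is correct and follows essentially the same route as the paper, whose own argument is exactly the discussion preceding the statement: the forgetful map to $\Gr_G$ via the principal-bundle correspondence, the forced identity $\varphi_1'|_{C_{01}} = \Ad_{\sigma^{-1}}\varphi_1$ showing the positivity condition is necessary, the gluing construction showing it is sufficient, and the matching of equivalence relations through positive loops. Your final functorial upgrade over test algebras $R$ is a refinement the paper leaves implicit, and it introduces no new difficulty.
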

	
	After identifying the space of Hecke transformations at a point $c \in C$ with an affine Springer fibre (i.e. after fixing a suitable trivialisation), we will denote by $\mathcal H^c_\sigma(E,\varphi)$ (or $\mathcal H_\sigma(E,\varphi)$ if the point is clear from context) the Hecke transformation corresponding to $\sigma \in \Gr_G^{(\varphi_1)_c}$. Similarly, we will denote the space of Hecke transformations by $\mathcal H^c_{(E,\varphi)}$ (or $\mathcal H_{(E,\varphi)}$ if the point is clear from context) and we will implicitly identify it with $\Gr_G^{(\varphi_1)_c}$ in the presence of a fixed trivialisation as above.
	
	\begin{remark}\label{minusculefibre}
		Suppose that $\omega_i^\vee$ is minuscule. Then, by Remark \ref{minusculeflagvar}, its Schubert cell is isomorphic to a flag variety $G/P_i$. Thus, the space of Hecke transformations with type $z^{\omega_i^\vee}$ becomes
		$$\{gP_i \in G/P_i : (\Ad_{z^{-\omega_i^\vee}g^{-1}}\varphi_1)_c \in L^+\lieg\}.$$
		
		In this case $gP_i$ belongs to the Hecke transformation space if and only if
		$$\varphi_1|_c \in L^+\lieg \cap \Ad_{g\omega_i^\vee} L^+\lieg.$$
		
		The expression to the right is the Lie algebra of the parabolic $gP_i^{opp}g^{-1}$ at $z=0$ (the superscript \textit{opp} denotes the opposite parabolic). In other words, the space of Hecke transformations with type $\omega_i^\vee$ can be written as
		$$\{gP_i \in G/P_i : \varphi_1(c) \in \Ad_g \liep_i^{opp}\},$$
		
		\noindent which is the \textbf{(partial) Springer fibre} of $\varphi_1(c)$ corresponding to the parabolic $P_i^{opp}$ (here we use that two parabolics are conjugate if and only if their opposites are, so $G/P_i$ also parameterises the parabolics conjugate to $P_i^{opp}$).
	\end{remark}
	
	\begin{remark}\label{minusculekostantsection}
		Let $e \in \lieg$ be a regular nilpotent element and $(h,e,f)$ a (principal) $\liesl_2$-triple containing $e$. Consider the Kostant section $\mathcal S := e + C_{\lieg}(f)$ which is a section for the Chevalley map $\chi : \lieg \to \lieg \git G \simeq \liet / W$, where $W$ is the Weyl group for $(\lieg, \liet)$. Now let $\liep \subseteq \lieg$ be a parabolic subalgebra containing $\liet$, $\liel$ its levi factor and $W_\liel$ the Weyl group for $(\liel,\liet)$. Let $\tilde{\mathcal S}_P$ be the preimage of $\mathcal S$ under the Grothendieck--Springer resolution for $P$. Then $\tilde{\mathcal S}_P \to \mathcal S$ is isomorphic to $\liet / W_\liel \to \liet / W$. In particular, the Springer fibre at $e$ for $P$ contains a single point.
		
		Combining this with Remark \ref{minusculefibre}, we get that if $\varphi_1(c) \in \lieg$ is a regular nilpotent element then there is only one possible Hecke transformation of type a minuscule (in the Langlands dual) cocharacter $\omega_i^\vee$. This is a generalisation of the fact that the matrix
		$$\begin{pmatrix}
			0 & 0 & \dots & 0 & 0 \\
			1 & 0 & \dots & 0 & 0 \\
			0 & 1 & \dots & 0 & 0 \\
			\vdots & \vdots & \ddots & \vdots & \vdots \\
			0 & 0 & \dots & 1 & 0 \\
		\end{pmatrix}$$
		\noindent only admits one invariant subspace of a given dimension.
	\end{remark}
	
	We conclude this section with an example.
	
	\begin{example}
		Consider the canonical uniformising $\PGL_3(\C)$-Higgs bundle from Example \ref{uniformising}: $E = K_C \oplus \mathcal O \oplus K_C^{-1}$ and $\varphi = \begin{pmatrix}
			0 & 0 & 0 \\
			1 & 0 & 0 \\
			0 & 1 & 0
		\end{pmatrix}$. We take as Cartan subalgebra the diagonal matrices with coordinates $(e_1,e_2,e_3)$ (subject to $e_1+e_2+e_3=0$) and as simple roots $\alpha_1 := e_2 - e_1$ and $\alpha_2 = e_3 - e_2$. A choice of simple root vectors is $X_1 := E_{21}$ and $X_2 := E_{32}$, where $E_{ij}$ denotes the matrix with a one on the entry $(i,j)$ and zeroes elsewhere. This example is of Borel type as the Higgs field has constant value equal to $\alpha = X_1 + X_2 \in \lieg_1$ and the bundle reduces to $G_0$ the maximal torus of diagonal matrices. Consider the first fundamental cocharacter, which as a cocharacter of $T \subseteq G$ is:
		$$z^{\omega_1^\vee} = \begin{pmatrix}
			z^{-1} & 0 & 0 \\ 0&  1 & 0 \\ 0& 0& 1 
		\end{pmatrix},$$
		\noindent and as an element of $\liet \subseteq \liesl_3(\C)$ it is $\omega_1^\vee = \mathrm{diag}(-2/3, 1/3, 1/3)$. Now, let $z = \exp(t)$ and use the fact that $[\omega_i^\vee, X_j] = \alpha_j(\omega_i^\vee)X_j = \delta_{ij}X_j$ to compute:
		$$\Ad_{z^{k\omega_i^\vee}}(X_j) = \Ad_{\exp(tk\omega^\vee_i)}(X_j) = \exp(\ad(tk\omega^\vee_i))(X_j) = \begin{cases}
			z^kX_j & i=j,\\
			X_j & i \neq j
		\end{cases}.$$
		
		Thus, in this case, the choice $\sigma = z^{\omega_1^\vee}$ is not in the space of Hecke transformations since $\sigma^{-1} = z^{-\omega_1^\vee}$ maps the Higgs field to $\frac{1}{z}X_1 + X_2$ which has a pole. However we can take $\sigma = z^{-\omega_1^\vee}$. Then the Hecke modified Higgs bundle is
		$$E' = K_C(-c) \oplus \mathcal O \oplus K_C^{-1},$$
		\noindent since the transition function for $\mathcal O(-c)$ from the trivialisation $C_1$ to $C_0$ is precisely $z$.
		Moreover, the new Higgs field in local coordinates around $z$ is, according to the previous calculations, equal to $zX_1 + X_2$. If $s_c \in H^0(\mathcal O(c))$ denotes the canonical section, we can globally write it as
		$$\varphi' = \begin{pmatrix}
			0 & 0 & 0 \\
			s_c & 0 & 0 \\
			0 & 1 & 0
		\end{pmatrix}.$$
		
		Thus we see how this approach recovers the one in \cite[Section 4.2]{hausel_very_2022}. Note that we can always revert the Hecke transformation by taking $\sigma^{-1}$ in the space of Hecke transformations of $(E',\varphi')$ with respect to the induced trivialisation.
		
		Here is an example of Hecke transformation that produces a non-fixed nilpotent. We take $\sigma^{-1} = \gamma_+^{-1}\omega_1^\vee$ where $\gamma_+$ is the following element of the \textit{root subgroup}:
		$$\gamma_+ = \begin{pmatrix}
			1 & 0 & 0 \\
			0 & 1 & 0 \\
			0 & 1 & 1
		\end{pmatrix}.$$
		
		Then $(\omega_1^\vee)^{-1}$ sends $X_1 \mapsto zX_1$, $X_2 \mapsto X_2$ and $\gamma_+$ sends $X_1 \mapsto X_1 + X_3$, $X_2 \mapsto X_2$. So in the Hecke transformation, locally (on $C_1$) the Higgs field has value $zX_1+X_3 + X_2$ which is nilpotent but not graded (also $G_0$ is not preserved by this $\sigma$). 
	\end{example}
	
	\subsection{$\C^\times$-action on the affine Grassmannian}
	We introduce a $\C^\times$-action on the affine Grassmannian that will later allow us to study the Bia\l ynicki-Birula decomposition of $\mathcal M(G)$ at fixed points of Borel type by applying Hecke transformations at the fixed point. Recall the grading element $\zeta = \sum_{j=1}^r\omega_j^\vee$ corresponding to fixed points of Borel type and the corresponding cocharacter $\xi_{\bullet} : \C^\times \to \Ad(G)$ that it defines. Notice that left multiplication by such a cocharacter gives a well defined action on $\Gr_G$.
	
	\begin{proposition}\label{cstargrass}
		Let $(E,\varphi)$ be a fixed point of the $\C^\times$-action of Borel type with fixed trivialisation $\{(C_0,t_0)$, $(C_1,t_1)\}$ so that the Hecke transformation space is identified with the affine Springer fibre:
		$$\mathcal H_{(E,\varphi)} := \{\sigma \in \Gr_G : (\Ad_{\sigma^{-1}}\varphi_1)_c \in L^+\lieg\} \subseteq \Gr_G.$$
		
		We define $\sigma_\lambda := \xi_\lambda^{-1}\sigma \in \Gr_G$. Then
		\begin{enumerate}
			\item The map $\sigma \mapsto \sigma_\lambda$ defines a $\C^\times$-action on $\mathcal H_{(E,\varphi)}$ preserving the Schubert cells.
			\item There is an isomorphism $\psi_\lambda$ between the Hecke transformations $\mathcal H_{\sigma_\lambda}(E,\varphi)$ and $\lambda\mathcal H_{\sigma}(E,\varphi)$. 
			\item Choose a logarithm $t \in \C$ with $\exp(t) = \lambda$. There are induced trivialisations for $\mathcal H_{\sigma_\lambda}(E,\varphi)$ and $\lambda \mathcal H_{\sigma}(E,\varphi)$ where $\psi_\lambda$ is given by left multiplication with $\exp(-t\zeta)$ on $C_0$ and by the identity on $C_1$.
		\end{enumerate} 
	\end{proposition}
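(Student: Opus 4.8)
The plan is to dispatch part (1) by a single weight computation and to handle parts (2) and (3) simultaneously by writing down $\psi_\lambda$ explicitly as a pair of gauge transformations. Throughout I would fix the trivialisation $\{(C_0,t_0),(C_1,t_1)\}$ so that it is adapted to the reduction of $E$ to $T=G_0$ provided by the Borel type hypothesis; then the transition function $f_E:C_{01}\to G$ takes values in $T$, and the local expressions $\varphi_0,\varphi_1$ of the Higgs field take values in $\lieg_1$.

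For part (1), the key observation is that $\xi_\lambda=\exp(t\zeta)$ acts on $\lieg_1$ by the scalar $\lambda$: since $\alpha_i(\zeta)=1$ for every simple root and $\lieg_1$ is the degree-one part of the Borel grading, $\Ad_{\xi_\lambda}$ restricts to multiplication by $\lambda$ on $\lieg_1$. Hence for $\sigma\in\mathcal H_{(E,\varphi)}$, using $\sigma_\lambda^{-1}=\sigma^{-1}\xi_\lambda$,
\[
\Ad_{\sigma_\lambda^{-1}}\varphi_1=\Ad_{\sigma^{-1}}\Ad_{\xi_\lambda}\varphi_1=\lambda\,\Ad_{\sigma^{-1}}\varphi_1,
\]
which lies in $L^+\lieg$ precisely when $\Ad_{\sigma^{-1}}\varphi_1$ does (for $\lambda\neq 0$), so $\sigma\mapsto\sigma_\lambda$ preserves the affine Springer fibre. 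It is a $\C^\times$-action because $\xi$ is a cocharacter, giving $(\sigma_\lambda)_\mu=\xi_\mu^{-1}\xi_\lambda^{-1}\sigma=\sigma_{\lambda\mu}$, and it is well defined on $\Gr_G$ by the remark preceding the statement. Finally $\xi_\lambda^{-1}$ is a constant element of $T\subseteq G\subseteq L^+G$, so left multiplication by it fixes each orbit $\Gr_\mu=L^+G\,z^\mu$; the Schubert cells, and with them the type of the Hecke transformation, are thus preserved.

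For parts (2) and (3), recall that $\mathcal H_\sigma(E,\varphi)$ is presented in the induced trivialisation by the transition function $f_E\sigma$ and the local Higgs field $\Ad_{\sigma^{-1}}\varphi_1$ near $c$, agreeing with $\varphi_0$ on $C_0$. Consequently $\lambda\mathcal H_\sigma(E,\varphi)$ has the same transition function $f_E\sigma$ but local Higgs fields $\lambda\varphi_0$ and $\lambda\Ad_{\sigma^{-1}}\varphi_1$, whereas $\mathcal H_{\sigma_\lambda}(E,\varphi)$ has transition function $f_E\xi_\lambda^{-1}\sigma$ and local Higgs fields $\varphi_0$ and $\Ad_{\sigma_\lambda^{-1}}\varphi_1=\lambda\Ad_{\sigma^{-1}}\varphi_1$. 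I would then define $\psi_\lambda$ in these induced trivialisations by the gauge transformations $h_0:=\exp(-t\zeta)$ on $C_0$ and $h_1:=\mathrm{id}$ on $C_1$, and verify two things. The gluing condition relating the two presentations reduces to $\xi_\lambda^{-1}f_E=f_E\xi_\lambda^{-1}$, which holds because both factors are valued in the abelian group $T$. Compatibility with the Higgs fields is automatic on $C_1$ (there $h_1=\mathrm{id}$ and the two local fields already coincide), and on $C_0$ it follows from $\Ad_{h_0}(\lambda\varphi_0)=\lambda\Ad_{\xi_\lambda^{-1}}\varphi_0=\lambda\cdot\lambda^{-1}\varphi_0=\varphi_0$, again using that $\Ad_{\xi_\lambda}$ is multiplication by $\lambda$ on $\lieg_1$. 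This simultaneously produces the isomorphism of part (2) and exhibits it in the explicit form demanded by part (3).

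The one place demanding care, and the main obstacle, is the status of $\xi_\lambda$ together with the choice of trivialisation. Since $\zeta=\sum_i\omega_i^\vee$ is only a cocharacter of the adjoint group, $\xi_\lambda$ requires a choice of logarithm $t$ with $\exp(t)=\lambda$; two choices differ by $\exp(2\pi i n\zeta)$, which is central in $G$ because $\alpha(\zeta)\in\Z$ for every root $\alpha$. Left multiplication by a central element is trivial on $\Gr_G$, so the action in (1) is genuinely well defined, and on bundles such an element acts by a global central automorphism, so $\psi_\lambda$ is well defined up to this harmless ambiguity — precisely the dependence on a logarithm anticipated in (3). The only substantive input is that the commutation $\xi_\lambda^{-1}f_E=f_E\xi_\lambda^{-1}$ forces the trivialisation to be adapted to the $T$-reduction; once this choice is in place the verifications above are routine.
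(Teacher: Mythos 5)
Your proof is correct and takes essentially the same route as the paper: the paper likewise realises $\psi_\lambda$ as left multiplication by $\exp(-t\zeta)$ on $C_0$ and the identity on $C_1$ (there obtained by pushing the global automorphism $\tilde{\xi}_\lambda^{-1} = \exp(-t\zeta) \in \Aut(E)$ of the $T$-reduction down to the two charts, where $\sigma_\lambda^{-1}\tilde{\xi}_\lambda^{-1}\sigma = 1$), and it verifies gluing and Higgs-field compatibility from the same two inputs you use, namely commutativity of $T$ and the weight-one action of $\Ad_{\xi_\lambda}$ on $\lieg_1$. The differences are only presentational: you prove the invariance of the affine Springer fibre in (1) by the direct computation $\Ad_{\sigma_\lambda^{-1}}\varphi_1 = \lambda\Ad_{\sigma^{-1}}\varphi_1$ instead of extracting it from the isomorphism, and you make explicit the adapted-trivialisation hypothesis that the paper uses implicitly; just note that your gluing condition $\xi_\lambda^{-1}f_E = f_E\xi_\lambda^{-1}$ is the correct one for the map in the direction $\lambda\mathcal H_\sigma(E,\varphi) \to \mathcal H_{\sigma_\lambda}(E,\varphi)$, which is also the direction of the paper's $f_\lambda$.
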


	We finish the section with the proof of the above proposition, which is a local verification. 
	
	\begin{proof}
	Fix $\lambda \in \C^\times$ and a logarithm $t \in \C$ such that $\exp(t) = \lambda$. Note that for $(E,\varphi)$ a $(G_0,\lieg_1)$-Higgs bundle, the element $\tilde{\xi}_\lambda := \exp(t\zeta) \in \Aut(E)$ defines an automorphism of $E$ since $G_0$ is abelian. Moreover, we have the identity $\tilde{\xi}_\lambda^*\varphi = \lambda^{-1}\varphi$. We see that $\tilde{\xi}_\lambda^{-1}$ is an isomorphism of $(G_0,\lieg_1)$-Higgs bundles $(E,\varphi) \xrightarrow{\simeq} (E,\lambda \varphi)$. This also extends to a $G$-Higgs bundle isomorphism which we will denote equally. 
	
	Let $(E',\varphi')$ be the Hecke transformation of $E$ by $\sigma$ and $(E'_\lambda,\varphi'_\lambda)$ the Hecke transformation by $\sigma_\lambda$. We want to show that the diagram
	\[\begin{tikzcd}[ampersand replacement=\&]
		{E'} \&\& E \\
		\\
		{E'_\lambda} \&\& E
		\arrow["\psi", from=1-1, to=1-3]
		\arrow["{f_\lambda}"', from=1-1, to=3-1]
		\arrow["{\tilde{\xi}_\lambda^{-1}}", from=1-3, to=3-3]
		\arrow["{\psi_\lambda}", from=3-1, to=3-3]
	\end{tikzcd},\]
	
	\noindent which is well defined over $C_0$, extends to an isomorphism $f_\lambda$ which sends $\lambda\varphi'$ to $\varphi'_\lambda$. For this, we inspect it locally. We push the diagram down the trivialisations to $C_0$ to find the local expression $f_\lambda^0$:
	
	\[\begin{tikzcd}[ampersand replacement=\&]
		{C_0 \times G} \&\& {C_0 \times G} \\
		\\
		{C_0 \times G} \&\& {C_0 \times G}
		\arrow["{\text{Id}}", from=1-1, to=1-3]
		\arrow["{f_\lambda^0}"', from=1-1, to=3-1]
		\arrow["{L_{\tilde{\xi}_\lambda^{-1}}}", from=1-3, to=3-3]
		\arrow["{\text{Id}}", from=3-1, to=3-3]
	\end{tikzcd}.\]
	
	We see that $f_\lambda^0 = L_{\tilde{\xi}_\lambda^{-1}}$ is given by left multiplication with $\tilde{\xi}_\lambda^{-1}$. Similarly we find $f_\lambda^1$:
	\[\begin{tikzcd}[ampersand replacement=\&]
		{C_1 \times G} \&\& {C_1 \times G} \\
		\\
		{C_1 \times G} \&\& {C_1 \times G}
		\arrow["{L_\sigma}", from=1-1, to=1-3]
		\arrow["{f_\lambda^1}"', from=1-1, to=3-1]
		\arrow["{L_{\tilde{\xi}_\lambda^{-1}}}", from=1-3, to=3-3]
		\arrow["{L_{\sigma_\lambda}}", from=3-1, to=3-3]
	\end{tikzcd},\]
	\noindent so that $f^1_\lambda = L_{\sigma_\lambda^{-1}\tilde{\xi}_\lambda^{-1}\sigma} = L_{\Id_G}$. Notice that our choice of $\sigma_{\lambda}$ ensures that $\sigma_\lambda^{-1}\tilde{\xi}_\lambda^{-1}\sigma \in L^+G$ so that the isomorphism is also well defined at $c$. We finally need to check that $\lambda\varphi'$ is sent to $\varphi'_{\lambda}$.
	
	On $C_0$ we have $(f_\lambda^0)^*(\varphi'_\lambda)^0 =  (f_\lambda^0)^*\varphi_0 = (L_{\tilde{\xi}_\lambda^{-1}})^*\varphi_0$. We find this pullback as follows:
	$$L_{\tilde{\xi}_\lambda^{-1}}^*\varphi_0(p,g) = \varphi_0(p,\tilde{\xi}_\lambda^{-1}g) = \Ad_{\tilde{\xi}_\lambda}\varphi_0(p,g) = \lambda\varphi_0(p,g).$$
	We used that $(E,\varphi)$ reduces to a $(G_0,\lieg_1)$-Higgs pair so we can assume that $g \in G_0$ and hence it commutes with $\tilde{\xi}_\lambda^{-1}$, and also $[\zeta,\varphi_0]=\lambda\varphi_0$. We do the same check on $C_1$:
	$$(f_\lambda^1)^*(\varphi_\lambda')^1 = (L_{\sigma_\lambda^{-1}\tilde{\xi}_\lambda^{-1}\sigma})^*\Ad_{\sigma_\lambda^{-1}}\varphi_1= (L_{\Id_G})^*\Ad_{\sigma^{-1}\tilde{\xi}_\lambda}\varphi_1 = \lambda \Ad_{\sigma^{-1}}\varphi_1.$$

	 Finally, since $\sigma_\lambda$ is in the same $L^+G$-orbit as $\sigma$, they belong to the same Schubert cell.
	\end{proof}
	
	\section{Classification of very stable $G$-Higgs bundles of Borel type}\label{sectheorem}
	
	In this section we classify, depending on the multiplicity vector, which fixed points of Borel type are very stable. We will need a series of preliminary results on Hecke transformations. For these, the base point $c \in C$ will be fixed throughout and omitted from the notation. 
	
	First, we consider Hecke transformations of such a point which produce another fixed point.
	
	\begin{proposition}\label{fixedhecke}
		Let $(E,\varphi)$ be a $\mathbb C^\times$-fixed point of Borel type with multiplicity vector $(a_1,\dots,a_r)$ and fixed trivialisation $\{(C_0,t_0),(C_1,t_1)\}$. Assume that the coweight $\mu := \sum_ib_i\omega_i^\vee \in \liet$ lifts to a cocharacter of $T \subseteq G$. Suppose also that $b_i \le a_i$ for $1 \le i \le r$. Then $z^\mu \in \mathcal H_{(E,\varphi)}$, and the Hecke transformation is another $\C^\times$-fixed point of Borel type with multiplicity vector $(a_i-b_i)_i$.
	\end{proposition}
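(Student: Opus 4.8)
The plan is to reduce the whole statement to a local computation at $c$ in the fixed trivialisation $\{(C_0,t_0),(C_1,t_1)\}$. Since $(E,\varphi)$ is of Borel type, its structure group reduces to $G_0 = T$, so in the germ at $c$ the Higgs field can be written as $\varphi_1 = \sum_{i=1}^r \delta_i(z) X_{\alpha_i}$, where $X_{\alpha_i} \in \lieg_{\alpha_i}$ is a fixed root vector and, by Definition \ref{multvect}, $\delta_i(z) = z^{a_i} u_i(z)$ with $u_i(0) \neq 0$. First I would check that $z^\mu$ lies in the affine Springer fibre $\mathcal H_{(E,\varphi)}$. Because the fundamental coweights are dual to the simple roots we have $\alpha_i(\mu) = b_i$, so the same computation as in the examples of Section \ref{heckeprin} gives $\Ad_{z^{-\mu}}(X_{\alpha_i}) = z^{-b_i} X_{\alpha_i}$, whence
$$\Ad_{z^{-\mu}}\varphi_1 = \sum_{i=1}^r z^{a_i - b_i} u_i(z)\, X_{\alpha_i}.$$
Since $b_i \le a_i$ for all $i$, every exponent $a_i - b_i$ is nonnegative, so $\Ad_{z^{-\mu}}\varphi_1 \in L^+\lieg$ and therefore $z^\mu \in \mathcal H_{(E,\varphi)}$, which is the first claim.

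Next I would identify the transformed Higgs bundle $(E',\varphi') := \mathcal H_{z^\mu}(E,\varphi)$. The key observation is that the transformation is \emph{internal to the torus}: the transition function $f_E$ of $E$ is valued in $T$, and because $\mu$ is a cocharacter of $T$ the loop $z^\mu$ is valued in $T((z))$, so $f_{E'} = f_E \cdot z^\mu$ is again $T$-valued and $E'$ reduces to $G_0 = T$. By the displayed local expression the new Higgs field still takes values in $\lieg_1 = \bigoplus_i \lieg_{\alpha_i}$, so $(E',\varphi')$ reduces to a $(G_0,\lieg_1)$-Higgs pair. Its $i$-th component $\delta_i'$ agrees with $\delta_i$ over $C_0$, where $\psi$ is an isomorphism, and has germ $z^{a_i-b_i}u_i(z)$ at $c$; in particular each $\delta_i'$ is nonzero, so $\varphi'$ is generically in the open orbit $\Omega$, and $\ord_c \delta_i' = a_i - b_i$. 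This is exactly the assertion that $(E',\varphi')$ is of Borel type with multiplicity vector $(a_i - b_i)_i$.

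To see that $(E',\varphi')$ is $\C^\times$-fixed, the cleanest route is Proposition \ref{cstargrass}: since $\mu$ and $\zeta$ both lie in $\liet$ they commute, so in $\Gr_G$ one has $\sigma_\lambda = \xi_\lambda^{-1}z^\mu = z^\mu\,\xi_\lambda^{-1} \in z^\mu L^+G$, i.e. $z^\mu$ is itself a fixed point of the $\C^\times$-action on $\mathcal H_{(E,\varphi)}$. Part (2) of that proposition then provides an isomorphism $(E',\varphi') \simeq \lambda\,\mathcal H_{z^\mu}(E,\varphi) = (E',\lambda\varphi')$ for every $\lambda$, which is precisely $\C^\times$-invariance; alternatively one may invoke the characterisation of fixed points as those reducing to a graded $(G_0,\lieg_j)$-Higgs pair. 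The step I expect to demand the most care is not any of these computations, which are routine, but confirming that $(E',\varphi')$ genuinely lands in $\mathcal M(G)$, i.e. that polystability is preserved by the transformation; I would argue this from the graded structure, running the same degree/stability analysis on $(E',\varphi')$ as on $(E,\varphi)$, since the reduction to $T$ and the $T$-valued gluing are the only data entering that analysis and they have been described explicitly above.
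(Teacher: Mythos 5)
Your first three paragraphs are, at their core, the paper's proof: the paper's entire argument is the computation $\Ad_{z^{-\mu}}\varphi_1=\sum_i z^{a_i-b_i}u_i(z)X_{\alpha_i}$, obtained exactly as you do from $\alpha_j(\omega_i^\vee)=\delta_{ij}$, with everything else (the $T$-valued regluing, the nonvanishing and vanishing orders of the new components, the $\C^\times$-fixedness) left implicit. Your explicit verifications of those points are correct, granting the shared implicit convention that the fixed trivialisation is adapted to the $T$-reduction (this is also what makes the local formula for $\varphi_1$ legitimate). Your route to fixedness via Proposition \ref{cstargrass} --- observing that $z^\mu$ is itself fixed under $\sigma\mapsto\xi_\lambda^{-1}\sigma$ because $\mu$ and $\zeta$ commute, so that $(E',\varphi')\simeq(E',\lambda\varphi')$ --- is a clean alternative with the added merit that it does not invoke the characterisation of fixed points, which the paper states only for polystable Higgs bundles.

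The final paragraph, however, contains a genuine error. Preservation of (poly)stability is not routine and is in fact \emph{false} under the stated hypotheses: only $b_i\le a_i$ is assumed, so the $b_i$ may be negative, and this generality is essential --- in the proof of Theorem \ref{verystablecharact} the proposition is applied with $\mu=\alpha^\vee$ a coroot, whose coordinates $b_i=\alpha_i(\alpha^\vee)$ in the fundamental-coweight basis include negative Cartan-matrix entries. When some $b_i<0$ the transformation raises multiplicities and shifts the degrees of the associated line bundles in the destabilising direction. Concretely, take $G=\SL_2(\C)$, $g=2$, and the canonical uniformising Higgs bundle $E=K_C^{1/2}\oplus K_C^{-1/2}$ with component $\delta_1=1$, so $a_1=0$; choose $\mu=-\alpha^\vee=-2\omega_1^\vee$, so $b_1=-2\le a_1$. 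The transform is $E'=K_C^{1/2}(-c)\oplus K_C^{-1/2}(c)$ with component vanishing to order $2$ at $c$, and the $\varphi'$-invariant line subbundle $K_C^{-1/2}(c)$ has degree $0$: the result is strictly semistable and not polystable (its S-equivalence class is represented by $(K_C^{-1/2}(c)\oplus K_C^{1/2}(-c),0)$, which is not of Borel type at all), so it is not a point of $\mathcal M(G)$ in the sense you want. This is precisely why the paper makes no stability claim inside this proposition and instead proves preservation of stability separately, in Proposition \ref{stabilitymain} and Corollaries \ref{stability} and \ref{stabilityweight}, under the extra positivity hypothesis that $\mu$ pairs non-negatively with all dominant characters; those results are then cited alongside Proposition \ref{fixedhecke} whenever membership in the moduli space is needed. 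So you should either drop the polystability discussion and read the proposition, as the paper does, as a statement about the graded structure and multiplicities of the transform, or add the positivity hypothesis on $\mu$ and reproduce the degree argument of Proposition \ref{stabilitymain}; as written, the claim that stability follows because ``the reduction to $T$ and the $T$-valued gluing are the only data entering that analysis'' does not hold.
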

	\begin{proof}
		By definition of the multiplicity vector, if $z$ is the local coordinate at $C_1$ we can write $\varphi_1 = \sum_iz^{a_i}u_i(z)X_i$, where $X_i \in \lieg_{\alpha_i}$ are simple root vectors and $u_i(z)$ are units in $\C[[z]]$. Now, write $z = \exp(t)$ and use the fact that $[\omega_i^\vee, X_j] = \alpha_j(\omega_i^\vee)X_j = \delta_{ij}X_j$ to compute:
		$$\Ad_{z^{k\omega^\vee_i}}(X_j) = \Ad_{\exp(tk\omega^\vee_i)}(X_j) = \exp(\ad(tk\omega^\vee_i))(X_j) = \begin{cases}
			z^kX_j & i=j,\\
			X_j & i \neq j
		\end{cases}.$$
		Then, we have
		$$\Ad_{z^{-\mu}}(\varphi_1) = \sum_iz^{a_i-b_i}u_i(z)X_i,$$
		\noindent as desired.
	\end{proof}
	
	In order to relate different fixed points using Hecke transformations, we need to consider curves in the space $\mathcal H_{(E,\varphi)}$ of Hecke transformations. Recall the $\C^\times$-action on $\Gr_G$ from Proposition \ref{cstargrass}, sending $\sigma$ to $\sigma_{\lambda} = \xi_{\lambda}^{-1}\sigma$.
	
	\begin{proposition}\label{heckecurve}
		Let $\mu \in X_*(T)$ be a cocharacter and $\alpha^\vee \in \Phi^\vee_+$ a positive coroot with corresponding positive root $\alpha \in \Phi^+$. Assume that $m:=\alpha(\mu) - 1 \ge 0$. There exists an element 
		$$\sigma_{\mu,\alpha} := Li_\alpha\begin{pmatrix}
			1 & z^m \\
			0 & 1
		\end{pmatrix}z^\mu \in \Gr_G$$
		\noindent such that $\lambda \mapsto (\sigma_{\mu,\alpha})_{\lambda}$ is contained in $\Gr_{\mu}$ and has limits $z^{\mu-\alpha^\vee}$ when $\lambda \to 0$ and $z^\mu$ when $\lambda \to \infty$.
	\end{proposition}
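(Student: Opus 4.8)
The plan is to compute the $\C^\times$-orbit of $\sigma_{\mu,\alpha}$ explicitly and to recognise it as a reparametrisation of the rational curve $C_{\mu,\alpha}$ produced in Lemma~\ref{grassmanniancurve}; the two limit points can then be read off directly from that lemma. Setting $X_\alpha \in \lieg_\alpha$ to be the image of $\smtrx{0 & 1 \\ 0 & 0}$ under the differential of $i_\alpha$, we may write $\sigma_{\mu,\alpha} = \exp(z^m X_\alpha)\,z^\mu$, which for each value of its upper unipotent entry is a point of $C_{\mu,\alpha}$.

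First I would apply the action. By Proposition~\ref{cstargrass} we have $(\sigma_{\mu,\alpha})_\lambda = \xi_\lambda^{-1}\sigma_{\mu,\alpha}$ with $\xi_\lambda^{-1} = \exp(-t\zeta)$ for any logarithm $\exp(t) = \lambda$. Commuting the torus factor past the unipotent one gives
\[
(\sigma_{\mu,\alpha})_\lambda = \exp\!\bigl(\Ad_{\exp(-t\zeta)}(z^m X_\alpha)\bigr)\,\exp(-t\zeta)\,z^\mu .
\]
Since $[\zeta,X_\alpha] = \alpha(\zeta)X_\alpha$ and $\alpha(\zeta) = \heightr(\alpha)$ (because $\zeta = \sum_j\omega_j^\vee$ is dual to the simple roots), the adjoint action scales $X_\alpha$ by $\lambda^{-\heightr(\alpha)}$. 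The remaining constant factor $\exp(-t\zeta)$ commutes with $z^\mu$ and, being a constant loop in $T \subseteq G$, lies in $L^+G$, so it disappears in the quotient $\Gr_G = LG/L^+G$. This leaves the clean expression
\[
(\sigma_{\mu,\alpha})_\lambda = Li_\alpha\begin{pmatrix} 1 & \lambda^{-\heightr(\alpha)}z^m \\ 0 & 1 \end{pmatrix} z^\mu \in \Gr_G ,
\]
which is exactly the affine point of $C_{\mu,\alpha}$ with parameter $b = \lambda^{-\heightr(\alpha)}$.

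The three assertions then follow from Lemma~\ref{grassmanniancurve}. As $\heightr(\alpha) \ge 1$ for a positive root, $b = \lambda^{-\heightr(\alpha)} \in \C^\times$ for every $\lambda \in \C^\times$, so the whole orbit lies in the affine chart $\mathbb{A}^1 \subseteq \Gr_\mu$. Letting $\lambda \to \infty$ sends $b \to 0$, giving $\exp(0)\,z^\mu = z^\mu$, while $\lambda \to 0$ sends $b \to \infty$, whose limit in the $\mathbb{P}^1$-compactification is the distinguished point $z^{\mu-\alpha^\vee}$ identified in Lemma~\ref{grassmanniancurve}. I expect no serious obstacle here, as the core conjugation identity is the same one already used in Propositions~\ref{fixedhecke} and~\ref{cstargrass}; the only delicate points are confirming $\alpha(\zeta) = \heightr(\alpha)$ and checking the sign of the exponent, so that it is $\lambda \to 0$ (and not $\lambda \to \infty$) that produces the non-generic point $z^{\mu-\alpha^\vee}$. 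Fixing this orientation correctly is what aligns the statement with the Bia\l ynicki-Birula picture used later.
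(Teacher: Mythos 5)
Your proof is correct and takes essentially the same route as the paper: both compute the orbit by commuting $\xi_\lambda^{-1}$ past the unipotent factor to get $Li_\alpha\smtrx{1 & \lambda^{-\heightr(\alpha)}z^m \\ 0 & 1}z^\mu$ (absorbing the leftover constant torus element into $L^+G$), identify this with the affine chart of the curve $C_{\mu,\alpha}$, and read off the two limits from Lemma~\ref{grassmanniancurve}. Your explicit attention to the sign of the exponent $\lambda^{-\heightr(\alpha)}$, which determines that $\lambda\to 0$ lands on $z^{\mu-\alpha^\vee}$, is exactly the point the paper's proof relies on as well.
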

	
	\begin{proof}
		Since $\Ad_{\xi_\lambda}$ acts with weight $\heightr(\alpha)$ on $X_\alpha$, the adjoint action of $\xi_\lambda^{-1}$ maps $X_{\alpha} \mapsto \lambda^{-\heightr(\alpha)}X_{\alpha}$. This implies that we have the following curve:
		$$\lambda \mapsto \xi_\lambda^{-1}Li_\alpha\begin{pmatrix}
			1 & z^m \\
			0 & 1
		\end{pmatrix}z^\mu = \xi_\lambda^{-1}Li_\alpha\begin{pmatrix}
			1 & z^m \\
			0 & 1
		\end{pmatrix}z^\mu\xi_\lambda = Li_\alpha\begin{pmatrix}
			1 & \lambda^{-\heightr(\alpha)} z^m \\
			0 & 1
		\end{pmatrix}z^\mu.$$
		
		The result now follows directly from Lemma \ref{grassmanniancurve} and its proof, noticing that the curve given by $(\sigma_{\mu,\alpha})_{\lambda}$ coincides with $C_{\mu,\alpha} \setminus \{z^\mu, z^{\mu-\alpha}\}$.
	\end{proof} 
	
	We will also need to know when can we perform Hecke transformations by the curve in Proposition \ref{heckecurve}.
	
	\begin{lemma}\label{lemmahecke}
		Let $(E,\varphi)$ be a $\mathbb C^\times$-fixed point of Borel type with multiplicity vector $(a_1,\dots,a_r)$ and fixed trivialisation $\{(C_0,t_0),(C_1,t_1)\}$. Consider the coweight $\mu := \sum_ib_i\omega_i^\vee \in \liet$ and assume that it lifts to a cocharacter of $G$. Consider also the positive root $\alpha \in \Delta^+$.  	
		Then, $\sigma_{\mu,\alpha} \in \mathcal H_{(E,\varphi)}$ if and only if for all $i \in \{1,\dots,r\}$ and $l_i\ge0$ such that $\alpha_i + l_i\alpha \in \Phi$, we have
		$$a_i - b_i - l_i \ge 0.$$
	\end{lemma}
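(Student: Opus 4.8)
The plan is to translate the membership condition $\sigma_{\mu,\alpha}\in\mathcal H_{(E,\varphi)}$, which by the description of the Hecke space means $\Ad_{\sigma_{\mu,\alpha}^{-1}}\varphi_1\in\lieg[[z]]$, into a condition on the pole orders of the individual root components of $\Ad_{\sigma_{\mu,\alpha}^{-1}}\varphi_1$. First I would observe that, since $i_\alpha$ carries the upper triangular nilpotent of $\liesl_2$ to the root vector $X_\alpha$, we may write $\sigma_{\mu,\alpha}=\exp(z^mX_\alpha)z^\mu$, hence $\sigma_{\mu,\alpha}^{-1}=z^{-\mu}\exp(-z^mX_\alpha)$ and $\Ad_{\sigma_{\mu,\alpha}^{-1}}=\Ad_{z^{-\mu}}\circ\exp(-z^m\ad_{X_\alpha})$. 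Writing $\varphi_1=\sum_i z^{a_i}u_i(z)X_i$ with $X_i\in\lieg_{\alpha_i}$ and $u_i(z)\in\C[[z]]^\times$ as in the proof of Proposition \ref{fixedhecke}, I would expand
$$\exp(-z^m\ad_{X_\alpha})\varphi_1=\sum_{i}z^{a_i}u_i(z)\sum_{k\ge0}\frac{(-z^m)^k}{k!}\,\ad_{X_\alpha}^k(X_i).$$

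The structural heart of the argument is the identity $\ad_{X_\alpha}^k(X_i)=c_{i,k}X_{\alpha_i+k\alpha}$ with $c_{i,k}\in\C$ and, for $k\ge0$, $c_{i,k}\neq0$ if and only if $\alpha_i+k\alpha\in\Phi$. This is $\liesl_2$-representation theory for the $\alpha$-string through $\alpha_i$: the string is an irreducible module with one-dimensional weight spaces, so the raising operator $\ad_{X_\alpha}$ maps each root space isomorphically onto the next until it reaches the top of the (unbroken) string and then vanishes. Applying $\Ad_{z^{-\mu}}$ scales the $\lieg_{\alpha_i+k\alpha}$-component by $z^{-(\alpha_i+k\alpha)(\mu)}$; since $\mu$ is an integral coweight with $\alpha_i(\mu)=b_i$ and $\alpha(\mu)=m+1$, the total order in $z$ of this component equals
$$a_i+km-(\alpha_i+k\alpha)(\mu)=a_i+km-b_i-k(m+1)=a_i-b_i-k.$$
Because $u_i$ is a unit and $c_{i,k}\neq0$ exactly when $\alpha_i+k\alpha$ is a root, this integer is the genuine leading order of the $X_{\alpha_i+k\alpha}$-component whenever that component is nonzero.

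To conclude, and in particular to obtain the ``only if'' direction, I would use that the map $(i,k)\mapsto\alpha_i+k\alpha$ is injective: if $\alpha_i+k\alpha=\alpha_j+k'\alpha$ with $i\neq j$ then $\alpha_i-\alpha_j$ would be a multiple of the root $\alpha$, which is impossible since the simple-root expansion of $\alpha_i-\alpha_j$ has coefficients of mixed sign. Hence the nonzero terms above occupy distinct root spaces and there is no cancellation, so $\Ad_{\sigma_{\mu,\alpha}^{-1}}\varphi_1$ lies in $\lieg[[z]]$ if and only if each surviving order $a_i-b_i-k$ is nonnegative, that is, $a_i-b_i-l_i\ge0$ for every $i$ and every $l_i\ge0$ with $\alpha_i+l_i\alpha\in\Phi$, as claimed. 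The main obstacle is precisely the pair of facts that upgrade this from a one-sided bound to an equivalence: the exact nonvanishing criterion $c_{i,k}\neq0\iff\alpha_i+k\alpha\in\Phi$ (so that no root component silently disappears and every hypothetical pole is genuine) together with the injectivity that rules out cross-cancellation between the components.
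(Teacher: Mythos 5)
Your proposal is correct and is essentially the paper's own argument: both expand $\Ad_{\sigma_{\mu,\alpha}^{-1}}\varphi_1$ as $\Ad_{z^{-\mu}}$ applied to the exponential of $-z^m\ad_{X_\alpha}$ acting on $\varphi_1=\sum_i z^{a_i}u_i(z)X_i$, compute the order $a_i-b_i-k$ of each root component $X_{\alpha_i+k\alpha}$, and conclude by noting that the nonzero components occupy distinct root spaces so no cancellation can occur. Your $\liesl_2$-string justification that $c_{i,k}\neq 0$ exactly when $\alpha_i+k\alpha$ is a root, and your mixed-sign injectivity argument, are just slightly more explicit versions of the paper's appeal to the unit structure constants $D_{i,l}(z)$ and to the distinct heights of the roots $\alpha_i+l\alpha$.
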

	
	\begin{proof}
		Set $m = \alpha(\mu)-1$ and $\nu := \exp(z^mX_\alpha)$ where $X_\alpha$ is a root vector for $\alpha$. We have to check that $\Ad_{z^{-\mu}\nu^{-1}}\varphi_1 \in L^+\lieg$. We have that $\varphi_1 = \sum_iz^{a_i}u_i(z)X_i$ with $u_i(z)$ units in $\C[[z]]$. Using the fact that, whenever $\alpha+\beta$ is a root, we have $[X_\alpha,X_\beta] \in \gen{X_{\alpha+\beta}}$, we get:
		$$\Ad_{\nu^{-1}}\varphi_1 = \sum_{l \ge 0}z^{lm}\varphi^{l\alpha}.$$
		Here
		$$\varphi^{l\alpha} := \sum_{i=1}^rD_{i,l}(z)z^{a_i}X_{\alpha_i + l\alpha},$$
		\noindent where the $D_{i,l}(z)$ are units in $\C[[z]]$ and with the convention that $X_{\alpha_i + l\alpha} = 0$ when $\alpha_i + l\alpha$ is not a root. Notice that the $\varphi^{l\alpha}$ are linearly independent from each other because the vectors appearing in them correspond to roots of height $\heightr(l\alpha)+1$. Hence $\sigma_{\mu,\alpha} \in \mathcal H_{(E,\varphi)}$ if and only if $\Ad_{z^{-\mu}}z^{lm}\varphi^{l\alpha} \in L^+\lieg$ for all $l\ge0$. Now, using that $\alpha(\mu) = m+1$, we get:
		$$\Ad_{z^{-\mu}} z^{lm}\varphi^{l\alpha} = \sum_{i=1}^rD_{i,l}(z)z^{lm}z^{a_i}z^{-b_i-l\alpha(\mu)}X_{\alpha_i+l\alpha} = \sum_{i=1}^rD_{i,l}(z)z^{a_i-b_i-l}X_{\alpha_i+l\alpha},$$
		\noindent and the lemma follows.
	\end{proof}
	
	 Recall that a dominant coweight $\mu \in X^+_*(T)$ is called \textit{minuscule} if it is minimal with respect to the dominance order.
	 
	\begin{corollary}\label{roothecke}
		Let $(E,\varphi)$ be a $\mathbb C^\times$-fixed point of Borel type with multiplicity vector $(a_1,\dots,a_r)$ and fixed trivialisation $\{(C_0,t_0),(C_1,t_1)\}$. Define the dominant coweight $\mu = \sum_ia_i\omega_i^\vee$. Suppose that $\mu$ is not minuscule. Then there exists a positive root $\alpha \in \Delta^+$ such that $\mu - \alpha^\vee$ is dominant and $\sigma_{\alpha^\vee,\alpha} \in \mathcal H_{(E,\varphi)}$.
	\end{corollary}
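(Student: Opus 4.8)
The plan is to apply Lemma \ref{lemmahecke} with the auxiliary coweight taken to be $\alpha^\vee$ itself. First note that $\alpha^\vee$ always lifts to a cocharacter of $T\subseteq G$, since the coroot lattice is contained in $X_*(T)$, and that $\sigma_{\alpha^\vee,\alpha}$ is well defined by Proposition \ref{heckecurve} because $\alpha(\alpha^\vee)-1=1\ge0$. Substituting $b_i=\langle\alpha_i,\alpha^\vee\rangle$ in Lemma \ref{lemmahecke}, the membership $\sigma_{\alpha^\vee,\alpha}\in\mathcal H_{(E,\varphi)}$ becomes the system of inequalities
$$a_i-\langle\alpha_i,\alpha^\vee\rangle-l\ge0 \quad\text{for all }i\text{ and all }l\ge0\text{ with }\alpha_i+l\alpha\in\Phi.$$
The instances with $l=0$ read $a_i\ge\langle\alpha_i,\alpha^\vee\rangle$ for all $i$, which is precisely the condition that $\mu-\alpha^\vee$ be dominant. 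Hence it suffices to exhibit a positive root $\alpha$ for which $\mu-\alpha^\vee$ is dominant and, in addition, the remaining inequalities (those with $l\ge1$) hold.

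For the existence of a root making $\mu-\alpha^\vee$ dominant I would invoke the classical fact that a dominant coweight is minuscule exactly when no positive coroot can be subtracted from it while keeping it dominant; since $\mu$ is assumed not minuscule, the set $R:=\{\alpha\in\Phi^+ : \mu-\alpha^\vee\text{ dominant}\}$ is nonempty. The key choice is then to pick $\alpha\in R$ of \emph{maximal height}.

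To handle the inequalities with $l\ge1$ I would read them off the $\alpha$-root string through each simple root. Fix $i$ with $\alpha\ne\alpha_i$ and write the string as $\alpha_i-p_i\alpha,\dots,\alpha_i+q_i\alpha$, so that $p_i-q_i=\langle\alpha_i,\alpha^\vee\rangle$; since the left-hand side $a_i-\langle\alpha_i,\alpha^\vee\rangle-l$ decreases in $l$, the whole family for that $i$ is equivalent to its extreme case $l=q_i$, namely $a_i\ge p_i$. (When $\alpha=\alpha_i$ the only instance is $l=0$, giving $a_i\ge2$.) In the simply-laced case every root string has length at most two, so $q_i\le1$, and $\alpha_i+\alpha\in\Phi$ forces $\langle\alpha_i,\alpha^\vee\rangle=-1$, whence $p_i=0$; thus $a_i\ge p_i$ is automatic and the mere nonemptiness of $R$ finishes the proof. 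In general I would argue by contradiction: if the tallest violating string top $\gamma=\alpha_i+q_i\alpha$ existed, I would show that $\mu-\gamma^\vee$ is again dominant, producing a root in $R$ strictly taller than $\alpha$ and contradicting maximality.

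The hard part is exactly this last implication. Because coroots are not additive, $(\alpha_i+q_i\alpha)^\vee$ is not $\alpha_i^\vee+q_i\alpha^\vee$ once long and short roots are mixed, so one cannot simply read off dominance of $\mu-\gamma^\vee$ from that of $\mu-\alpha^\vee$. I expect to resolve it by a careful root-string analysis organized by the Cartan integers $\langle\alpha_i,\alpha^\vee\rangle$ and $\langle\alpha,\alpha_i^\vee\rangle$: using the reflection $s_{\alpha_i}$ when $\langle\alpha,\alpha_i^\vee\rangle=-1$, and a direct length computation in the remaining mixed-length configurations, the genuinely delicate cases being those in types $B$, $C$, $F_4$ and $G_2$ where strings of length three or four occur. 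A representative check in $G_2$ already shows the phenomenon: for $\mu=\omega_1^\vee$ the lower good root $2\alpha_1+\alpha_2$ fails the $l=1$ inequality while the maximal-height good root $3\alpha_1+\alpha_2$ satisfies all of them, which is what makes the maximal-height choice the correct one.
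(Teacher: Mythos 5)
Your reduction of the membership $\sigma_{\alpha^\vee,\alpha}\in\mathcal H_{(E,\varphi)}$ to the family of inequalities $a_i\ge p_i$ (down-lengths of the $\alpha$-strings through the simple roots), and your treatment of the simply-laced types, are correct and essentially coincide with the paper's argument for those types. The gap is everything after that: for types $B$, $C$, $F_4$ and $G_2$ you only sketch a strategy (take $\alpha$ of maximal height in the set $R$ of positive roots with $\mu-\alpha^\vee$ dominant, then derive a contradiction from a violated string top), and you say yourself that the key implication --- that a violating top $\gamma=\alpha_i+q_i\alpha$ would force $\mu-\gamma^\vee$ dominant --- is left unproven. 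The paper finishes these types differently: it reduces, using monotonicity of the inequalities in the $a_i$, to $\mu$ in a short explicit list ($2\omega_1^\vee,\omega_2^\vee,\dots,\omega_n^\vee$ in type $B_n$, and so on), and then simply names a working $\alpha^\vee$ in each case, with no structural claim about heights.

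Worse, the strategy you propose is not just unfinished; it is false. Take type $B_2$ with Knapp's numbering, $\alpha_1=e_1-e_2$ (long), $\alpha_2=e_2$ (short), and $\mu=2\omega_1^\vee=2e_1$, so $(a_1,a_2)=(2,0)$. Then $R=\{\alpha_1,\;\alpha_1+\alpha_2\}$: indeed $\mu-\alpha_1^\vee=\omega_2^\vee$ and $\mu-(\alpha_1+\alpha_2)^\vee=\mu-2\omega_1^\vee=0$ are dominant, and no other positive root qualifies. The maximal-height element is $\alpha=\alpha_1+\alpha_2=e_1$, and it fails Lemma \ref{lemmahecke}: $\alpha_2+\alpha=e_1+e_2\in\Phi$ while $a_2-\alpha_2(\alpha^\vee)-1=0-0-1<0$ (equivalently $p_2=1>0=a_2$). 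The root that works is the \emph{lower} one, $\alpha=\alpha_1$, which is exactly the paper's choice $\alpha^\vee=2\omega_1^\vee-\omega_2^\vee$. Note also that your rescue mechanism does not fire: the violating string top is $\gamma=\alpha_1+2\alpha_2$, and $\mu-\gamma^\vee=2e_1-(e_1+e_2)=e_1-e_2$ is not dominant, so no taller element of $R$ is produced --- consistent with the fact that the conclusion you are trying to force is simply wrong here. The same inversion occurs in $C_2$ for $\mu=2\omega_2^\vee$, where the maximal-height element $\alpha_1+\alpha_2$ of $R$ fails and the simple root $\alpha_2$ works. So your $G_2$ computation, though correct, does not reflect the general pattern: in types $B$ and $C$ the maximal-height choice can fail while a shorter root succeeds, and the contradiction argument cannot be completed as stated. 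Any correct completion must make genuinely type-dependent choices, as the paper's case-by-case proof does.
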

	
	\begin{proof}
		It suffices to prove this for every simple Dynkin type. The strategy is to give an explicit $\alpha$ (or its coroot) such that the hypotheses of Lemma \ref{lemmahecke} are verified for $\sigma_{\alpha^\vee,\alpha}$. We follow the numbering for the simple roots in \cite[Appendix C]{knapp}.
		\begin{itemize}
			\item Types $A_n$, $D_n$, $E_6$, $E_7$ and $E_8$: in simply-laced types, it can be proven by analysing the possible root string lengths that any $\alpha \in \Delta^+$ such that $\mu - \alpha^\vee$ is dominant (which exists since $\mu$ is not minuscule) verifies $\sigma_{\alpha^\vee, \alpha} \in \mathcal H_{(E,\varphi)}$ via Lemma \ref{lemmahecke}. Alternatively, the check becomes immediate in the particular choice of the positive root with the special properties in \cite[Theorem 2.8ab]{stembridge_partial_1998}.
			\item Type $B_n$: recall that the type of the Langlands dual is $C_n$. It suffices to prove it for $\mu \in \{2\omega_1^\vee,\omega_2^\vee,\dots,\omega_n^\vee\}$ since any other non-minuscule dominant coweight contains one of those as a summand. In the case $\mu = 2\omega_1^\vee$, we may take $\alpha^\vee = 2\omega_1^\vee-\omega_2^\vee$. If $\mu = \omega_2^\vee$ we can choose $\alpha^\vee = \omega_2^\vee$. For $\mu = \omega_k^\vee$ with $2 < k \le n$, it suffices to take $\alpha^\vee = \omega_k^\vee - \omega_{k-2}^\vee$. 
			\item Type $C_n$: the type of the Langlands dual is $B_n$. As before, we may assume $\mu \in \{\omega_1^\vee,\dots,\omega_{n-1}^\vee, 2\omega_n^\vee\}$. If $\mu = \omega_1^\vee$, we take $\alpha^\vee = \omega_1^\vee$. If $\mu = \omega_k^\vee$ with $2 \le k < n$, we take $\alpha^\vee = \omega_k^\vee - \omega_{k-1}^\vee$. Finally, if $\mu = 2\omega_n^\vee$ we take $\alpha^\vee = 2\omega_n^\vee - \omega_{n-1}^\vee$.
			\item Type $F_4$: we may assume that $\mu$ is fundamental. For $\mu = \omega_1^\vee$ we may take $\alpha^\vee = \omega_1^\vee - \omega_4^\vee$. For $\mu = \omega_2^\vee$ we use $\alpha^\vee = \omega_2^\vee - \omega_1^\vee$. The choice for $\mu = \omega_3^\vee$ is $\alpha^\vee = \omega_3^\vee - \omega_4^\vee$, and for $\mu = \omega_4^\vee$ it is $\alpha^\vee = \omega_4^\vee$.
			\item Type $G_2$: again, it suffices to do the case where $\mu$ is fundamental. For $\mu = \omega_1^\vee$ we select $\alpha^\vee = \omega_1^\vee - \omega_2^\vee$ and for $\mu = \omega_2^\vee$ we choose $\alpha^\vee = \omega_2^\vee$.
		\end{itemize} 
	\end{proof}
	As a last ingredient, we need to check the stability of the resulting fixed point after some special kinds of Hecke transformations.
	
	\begin{proposition}\label{stabilitymain}
		Let $(E,\varphi)$ be a stable $\mathbb C^\times$-fixed point of Borel type with fixed trivialisation $\{(C_0,t_0),(C_1,t_1)\}$.  Consider $\mu \in \mathfrak t$ coming from a cocharacter of $T \subseteq G$ and such that $\chi_+(\mu) \ge 0$ for any dominant character $\chi_+ \in X_+^*(G)$ (i.e. $\mu$ is a linear combination of simple coroots with non-negative coefficients). Assume that $z^\mu \in \mathcal H_{(E,\varphi)}$. Then $(E',\varphi') := \mathcal H_{z^\mu}(E,\varphi)$ is also stable.
	\end{proposition}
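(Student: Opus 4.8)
The plan is to verify stability of $(E',\varphi')$ straight from the definition, by transporting each candidate destabilising reduction back to $(E,\varphi)$ and controlling the resulting change of degree. So I would fix $s\in i\liek$ and a reduction $\sigma'\in H^0(C,E'(G/P_s))$ with $\varphi'\in H^0(E'_{\sigma'}(\lieg_s)\otimes K_C)$, and aim to show $\deg E'(\sigma',s)>0$. Since $\psi\colon E'|_{C_0}\xrightarrow{\sim}E|_{C_0}$ is an isomorphism and $G/P_s$ is projective, the section $\psi\circ\sigma'$ over $C_0$ extends uniquely across $c$ by the valuative criterion of properness, giving a reduction $\sigma\in H^0(C,E(G/P_s))$. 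Over $C_0$ the two Higgs fields agree under $\psi$, so $\varphi$ lies in $E_\sigma(\lieg_s)\otimes K_C$ away from $c$; as membership in a saturated subsheaf is a closed condition it extends across $c$. Thus $\sigma$ is admissible for $(E,\varphi)$, and stability of $(E,\varphi)$ yields $\deg E(\sigma,s)>0$.

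Next I would compare the two degrees. Choosing $q$ with $q\chi_s$ lifting to $\tilde\chi_s\colon P_s\to\C^*$, the degrees are read off the line bundles $L:=E_\sigma\times_{\tilde\chi_s}\C^*$ and $L':=E'_{\sigma'}\times_{\tilde\chi_s}\C^*$. By construction $L\cong L'$ over $C_0$, so $L'\cong L(d\cdot c)$ for a single integer $d$, and $\deg E'(\sigma',s)-\deg E(\sigma,s)=d/q$. Everything therefore reduces to the local claim $d\geq 0$, which I would analyse in the fixed trivialisation near $c$, where the Hecke is realised by $z^\mu$ and $E'_{\sigma'}$ is a $P_s$-Hecke modification of $E_\sigma$ of some relative type $\nu\in X_*(T)$. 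Since $\chi_s$ kills the unipotent radical of $P_s$ and the coroots of its Levi, only the Levi component of the modification contributes, giving $d/q=-\langle\chi_s,\nu\rangle$.

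The hard part, and the genuine obstacle, is the sign of $\langle\chi_s,\nu\rangle$. A priori $\nu$ is only pinned down up to the Weyl orbit of $\mu$ (both have the same image in $\Gr_G$), and for a general representative the pairing can have either sign; what excludes the bad configurations is precisely the Higgs field. The key inputs are that $z^\mu\in\mathcal H_{(E,\varphi)}$, i.e.\ $\Ad_{z^{-\mu}}\varphi_1\in L^+\lieg$, together with the requirement that $\sigma'$ be compatible with $\varphi'_1=\Ad_{z^{-\mu}}\varphi_1$. Using the local form $\varphi_1=\sum_i z^{a_i}u_i(z)X_i$ of a Borel-type field and the inequalities $\alpha_i(\mu)\le a_i$ forced by $z^\mu\in\mathcal H_{(E,\varphi)}$, I would show these compatibilities put the reduction at $c$ in the aligned relative position with respect to $z^\mu$, so that $\nu$ is the dominant representative $\mu$ itself rather than a nontrivial Weyl translate.

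Once alignment is established the hypotheses finish the argument: $\chi_s$ is antidominant for $P_s$ (the sign convention under which $\deg E(\sigma,s)>0$ is the stability inequality), so $-\chi_s$ is a dominant character, and applying the hypothesis $\chi_+(\mu)\geq 0$ to $\chi_+=-\chi_s$ gives $\langle\chi_s,\mu\rangle\leq 0$, whence $d=-q\langle\chi_s,\mu\rangle\geq 0$. Combining, $\deg E'(\sigma',s)=\deg E(\sigma,s)-\langle\chi_s,\mu\rangle\geq\deg E(\sigma,s)>0$, so $(E',\varphi')$ is stable. I expect the alignment step to be where all the real work lies; a helpful simplification would be to factor $z^\mu$ into a sequence of simple-coroot Heckes $z^{\alpha_i^\vee}$ and argue one modification at a time, or to reduce to maximal parabolics so that $\chi_s$ can be taken to be a single (anti)fundamental weight.
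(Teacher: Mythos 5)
Your proposal follows the same route as the paper's own proof: transport the candidate destabilising reduction $\sigma'$ back to $(E,\varphi)$ by extending across $c$ using properness of $G/P_s$, transfer the Higgs-field compatibility, invoke stability of $(E,\varphi)$, and compare degrees using the hypothesis $\chi_+(\mu)\ge 0$ together with antidominance of $\chi_s$. You have also correctly isolated the one step carrying real content: the degree comparison needs the relative position of the two reduced $P_s$-bundles at $c$ to be $\mu$ itself and not another Weyl translate. This worry is genuine --- already for $\SL_2$, a line subbundle of $E'$ whose fibre at $c$ is not in the aligned direction induces a subbundle of $E$ of one degree \emph{less}, so it gains a degree under the Hecke modification rather than losing one, and stability of $(E,\varphi)$ then only gives semistability of $(E',\varphi')$. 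The paper is very terse at exactly this point (it asserts ``as $G_0\subseteq P_S$'' that one may take $f_{E'_{\sigma'}}=f_{E_\sigma}\mu$), so your diagnosis of where the work lies is accurate.

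The gap is that you leave this step as a plan, and the lever you propose --- the inequalities $\alpha_i(\mu)\le a_i$ forced by $z^\mu\in\mathcal H_{(E,\varphi)}$ --- is not what makes it work: those inequalities only guarantee that $\varphi'_1=\Ad_{z^{-\mu}}\varphi_1$ is regular at $c$, i.e.\ that the Hecke transform exists at all. What actually pins down the reduction is regularity of the Higgs field \emph{away} from $c$: since the $u_i(z)$ are units, $\varphi'_1(z)=\sum_i z^{a_i-\alpha_i(\mu)}u_i(z)X_{\alpha_i}$ is a regular nilpotent element of $\mathfrak b\subseteq\liep_S$ for every $z\ne 0$, and the partial Springer fibre of a regular nilpotent element for any parabolic type is a single point (Remark \ref{minusculekostantsection}). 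Hence any Higgs-compatible reduction of $(E',\varphi')$, read in the Borel-type trivialisation over the punctured disc, is the \emph{constant} standard one $eP_S$, and by continuity so is its extension across $c$; the same then holds for the induced reduction of $(E,\varphi)$. With both reductions constant in these trivialisations, the relative transition function of the reduced bundles is literally $z^\mu$, giving $\chi_\alpha(E'_{\sigma'})=\chi_\alpha(E_\sigma)\otimes\mathcal O(-\chi_\alpha(\mu)c)$ and completing the argument exactly as you intended. Your fallback of factoring $z^\mu$ into simple-coroot Heckes does not obviously repair this: the intermediate loops need not lie in the successive Hecke spaces (for $\mu=\alpha_1^\vee+\alpha_2^\vee$ in $\SL_3$ the hypothesis only requires $a_1,a_2\ge 1$, while the first factor $z^{\alpha_1^\vee}$ would require $a_1\ge 2$), so the one-shot argument via uniqueness of the compatible reduction is the right way to finish.
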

	
	\begin{proof}
		Let $s \in i \liek$. After conjugation we may assume that $P_s = P_S \subseteq G$, where $P_S$ is the standard parabolic subgroup corresponding to a subset of simple roots $S \subseteq \Pi$, and the character $\chi_s$ is of the form $\chi_s = \sum_{\alpha \in S}n_\alpha \omega_{\alpha}$ where $n_\alpha \le 0$ and $\omega_\alpha$ is the fundamental weight corresponding to $\alpha$. Suppose given a reduction of structure group $\sigma' \in H^0(E'(G/P_S))$ such that $\varphi' \in H^0(E'_{\sigma'}(\lieg_{s}) \otimes K_C)$. We have to show that $\deg(E')(\sigma',s) > 0$.
		
		We will see that $\sigma'$ induces a reduction $\sigma \in H^0(E(G/P_S))$. Indeed, following the usual notation for the trivialisations, let $\sigma'_0$ and $\sigma'_1$ be the local sections, i.e. equivariant maps $\sigma'_i : C_0 \times G \to G/P_S$ obtained by pulling back $\sigma'$ via the trivialisation. As usual, we set
		$$\sigma_0 := \sigma'_0.$$
		Now consider the map $\tilde{\sigma}_1 : C_{01} \to G/P_S$ defined by $\tilde{\sigma}_1(p) := \mu(p) \sigma'_1(p,1_G)$ (as always, we are viewing $p$ as a point in the punctured disk $C_{01}$ and hence it makes sense to evaluate $\mu$ on it). Since $P_S$ is parabolic, $G/P_S$ is projective and thus, by the valuative criterion of properness, this map extends uniquely to $\bar{\sigma}_1 : C_1 \to G/P_S$. We define $\sigma_1 : C_1 \times G \to G/P_S$ equivariantly, as
		$$\sigma_1(p,g) := g^{-1}\bar{\sigma}_1(p).$$
		These maps glue to a global section $\sigma$ since they agree on the intersection $C_{01}$. Indeed,
		$$\sigma_1(p,g) = g^{-1}\mu\sigma_1'(p,1_G) =  \sigma_1'(p, \mu^{-1} g) = \sigma_0'(p,f_E'\mu^{-1}g) = \sigma'_0(p,f_E g) = \sigma_0(p,f_Eg).$$
		It is also clear from the construction that $\varphi' \in H^0(E'_{\sigma'}(\lieg_{s}) \otimes K_C)$ implies $\varphi \in H^0(E_\sigma(\lieg_{s}) \otimes K_C)$. Thus, stability of $(E,\varphi)$ gives $\deg E(\sigma,s) > 0$.
		
		Choose $N \ge 1$ such that the multiples $N\omega_\alpha$ lift to characters $\chi_\alpha$ of the group. Note that by assumption $\chi_\alpha(\mu) \ge 0$. Recall that we can compute
		$$\deg E(\sigma,s) = \frac{1}{N}\sum_{\alpha \in S}n_\alpha \deg(\chi_{\alpha}(E_\sigma)),$$
		\noindent where $E_{\sigma}$ is the reduced bundle. As $G_0 \subseteq P_S$, we may assume that the transition functions for the reduced bundle still verify $f_{E_{\sigma'}'} = f_{E_\sigma}\mu$. Hence 
		$$\chi_{\alpha}(E'_{\sigma'}) = \chi_\alpha(E_\sigma) \otimes \mathcal O(-\chi_\alpha(\mu)c),$$
		\noindent so that $$\deg(\chi_{\alpha}(E'_{\sigma'})) = \deg(\chi_\alpha(E_\sigma)) - \chi_\alpha(\mu) \le \deg(\chi_\alpha(E_\sigma)).$$
		Since the coefficients $n_\alpha$ are not positive, we conclude that
		$$\deg E'(\sigma',s) \ge \deg E(\sigma,s) > 0,$$
		\noindent as desired.
	\end{proof}
	
	Simple coroots and fundamental coweights fall under the hypotheses of the proposition, as they can be expressed as a non-negative combination of the simple coroots. This gives the following corollaries.
	
	\begin{corollary}\label{stability}
		Let $(E,\varphi)$ be a stable $\C^\times$-fixed point of Borel type with fixed suitable trivialisation $\{(C_0,t_0),(C_1,t_1)\}$ and let $\alpha \in \Delta^+$ be such that $z^{\alpha^\vee} \in \mathcal H_{(E,\varphi)}$. Then $(E',\varphi') := \mathcal H_{z^{\alpha^\vee}}(E,\varphi)$ is also stable.
	\end{corollary}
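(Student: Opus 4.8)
The plan is to obtain this as an immediate application of Proposition \ref{stabilitymain} with the choice $\mu := \alpha^\vee$. The hypothesis $z^{\alpha^\vee} \in \mathcal H_{(E,\varphi)}$ is exactly the corresponding assumption of that proposition, and $(E,\varphi)$ is already assumed stable of Borel type with a fixed trivialisation, so it remains only to verify the two structural conditions that Proposition \ref{stabilitymain} imposes on $\mu$: first, that $\alpha^\vee$ integrates to a cocharacter of $T \subseteq G$; and second, that $\chi_+(\alpha^\vee) \ge 0$ for every dominant character $\chi_+ \in X_+^*(G)$, equivalently that $\alpha^\vee$ is a non-negative integer combination of the simple coroots $\alpha_1^\vee, \dots, \alpha_r^\vee$.

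The first condition is standard and requires no work: for $G$ semisimple the coroot lattice is contained in the cocharacter lattice $X_*(T)$, so $\alpha^\vee$ always defines a cocharacter and hence an element $z^{\alpha^\vee} \in \Gr_G$.

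The key point — and essentially the only part requiring an argument — is the positivity condition, which is nontrivial precisely because $\alpha \in \Delta^+$ is an arbitrary positive root and $\alpha^\vee$ need not be simple. Here I would invoke the duality of root systems: the assignment $\beta \mapsto \beta^\vee$ is a bijection of $\Phi$ onto the dual root system $\Phi^\vee$ carrying the base $\Pi = \{\alpha_1,\dots,\alpha_r\}$ onto the base $\{\alpha_1^\vee,\dots,\alpha_r^\vee\}$ of $\Phi^\vee$, and therefore it sends $\Delta^+$ onto $(\Phi^\vee)^+$. Thus $\alpha \in \Delta^+$ forces $\alpha^\vee = \sum_i c_i \alpha_i^\vee$ with all $c_i \in \Z_{\ge 0}$. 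Pairing with a dominant character then yields $\chi_+(\alpha^\vee) = \sum_i c_i \langle \chi_+, \alpha_i^\vee\rangle \ge 0$, as required. If a self-contained justification is wanted, one can instead test with the Weyl vector $\rho$: since $\langle \rho, \alpha_i^\vee\rangle = 1 > 0$ and $\langle \rho, \alpha^\vee\rangle$ has the same sign as $\langle \rho, \alpha\rangle$ (the positive scalar $2/\lvert\alpha\rvert^2$ intervening), positivity of $\alpha$ transfers to positivity of $\alpha^\vee$.

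With both conditions in hand, Proposition \ref{stabilitymain} applies directly and gives that $(E',\varphi') = \mathcal H_{z^{\alpha^\vee}}(E,\varphi)$ is stable. The only mild obstacle, such as it is, lies in the non-simply-laced cases, where $\beta \mapsto \beta^\vee$ fails to preserve lengths and the coefficients $c_i$ differ from those expressing $\alpha$ in the $\alpha_i$; but the base-to-base statement above holds irrespective of lacing, so no case-by-case analysis is needed and the deduction is uniform across all Dynkin types.
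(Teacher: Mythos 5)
Your proof is correct and takes essentially the same route as the paper: both reduce the corollary to Proposition \ref{stabilitymain} applied with $\mu = \alpha^\vee$, verifying the positivity hypothesis by pairing with (multiples of) fundamental weights. If anything, your argument is slightly more complete than the paper's terse proof, which only displays the pairing $\chi(\alpha_i^\vee) = N\delta_{ij} \ge 0$ for simple coroots, whereas you explicitly supply the standard fact that the coroot map carries $\Delta^+$ to positive coroots, so that the conclusion holds for an arbitrary positive root $\alpha \in \Delta^+$.
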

	
	\begin{proof}
		Suppose that the positive multiple of a fundamental weight $N\omega_j$ lifts to a character $\chi$. Then by the duality of simple coroots and fundamental weights, we get that $\chi(\alpha^\vee_i) = N\delta_{ij} \ge 0$, so Proposition \ref{stabilitymain} applies.
	\end{proof}
	
	\begin{corollary}\label{stabilityweight}
		Suppose that $\omega_i^\vee$ lifts to a cocharacter of $G$. Let $(E,\varphi)$ be a stable $\C^\times$-fixed point of Borel type with a suitable (as in Section \ref{heckeprin}) fixed trivialisation $\{(C_0,t_0),(C_1,t_1)\}$ such that $z^{\omega_i^\vee} \in \mathcal H_{(E,\varphi)}$. Then $(E',\varphi') := \mathcal H_{z^{\omega_i^\vee}}(E,\varphi)$ is also stable.
	\end{corollary}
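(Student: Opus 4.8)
The plan is to deduce this directly from Proposition \ref{stabilitymain} applied to $\mu = \omega_i^\vee$, in exact parallel with the way Corollary \ref{stability} was obtained for $\mu = \alpha^\vee$. The two hypotheses of that proposition that are not already granted here --- namely that $\mu$ comes from a cocharacter of $T \subseteq G$ and that $z^\mu \in \mathcal H_{(E,\varphi)}$ --- are built into the statement of this corollary, so the only point that genuinely requires an argument is the positivity condition $\chi_+(\omega_i^\vee) \ge 0$ for every dominant character $\chi_+ \in X_+^*(G)$, equivalently (as noted in Proposition \ref{stabilitymain}) that $\omega_i^\vee$ is a non-negative linear combination of simple coroots.

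First I would reduce to the case of a single fundamental weight: every dominant character is a non-negative integer combination $\chi_+ = \sum_j n_j\omega_j$ of fundamental weights, so by linearity it suffices to check $\omega_j(\omega_i^\vee) \ge 0$ for each $j$. This is the analogue for fundamental coweights of the clean duality identity $\chi(\alpha_i^\vee) = N\delta_{ij}$ that made the simple-coroot case of Corollary \ref{stability} immediate; the difference is that here the pairing will not be a Kronecker delta but a genuine (rational) coefficient.

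The remaining step is to evaluate $\omega_j(\omega_i^\vee)$. Writing $\omega_i^\vee = \sum_k c_{ik}\alpha_k^\vee$ and pairing with the simple roots through $\alpha_j(\omega_i^\vee) = \delta_{ij}$ identifies the $c_{ik}$ with the entries of the inverse Cartan matrix, whence $\omega_j(\omega_i^\vee) = c_{ij}$. The main (and essentially only) obstacle is therefore the positivity of these coefficients, i.e.\ that the fundamental coweights lie in the cone spanned by the simple coroots with non-negative coefficients. This is the classical fact that the inverse Cartan matrix has non-negative entries in every Dynkin type. Granting it, $\chi_+(\omega_i^\vee) = \sum_j n_j c_{ij} \ge 0$, so Proposition \ref{stabilitymain} applies and the stability of $(E',\varphi') = \mathcal H_{z^{\omega_i^\vee}}(E,\varphi)$ follows.
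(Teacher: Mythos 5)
Your proposal is correct and follows essentially the same route as the paper's proof: both reduce the corollary to Proposition \ref{stabilitymain} and verify the positivity hypothesis by expanding $\omega_i^\vee$ in the basis of simple coroots, where the coefficients are entries of the inverse Cartan matrix (of the dual), which are non-negative in every Dynkin type. The only cosmetic difference is that you check $\chi_+(\omega_i^\vee)\ge 0$ for arbitrary dominant characters by decomposing them into fundamental weights, while the paper verifies it directly on the lifted multiples $N\omega_j$ used inside the proof of Proposition \ref{stabilitymain}; the substance is identical.
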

	
	\begin{proof}
		Suppose that the positive multiple of a fundamental weight $N\omega_j$ lifts to a character $\chi$. Let $N_{ik}$ be change of basis (given by the inverse of the Cartan matrix for the dual) so that $\omega_i^\vee = \sum_{k}N_{ik}\alpha_{k}^\vee$. It is a fact that $N_{ik} \ge 0$ for all types of Cartan matrix. Now, $\chi(\omega_i^\vee) = N\cdot N_{ij} \ge 0$, so Proposition \ref{stabilitymain} applies.
	\end{proof}
	
	Recall from Proposition \ref{smoothness} that stability alone does not guarantee smoothness in $\mathcal M(G)$ which is required for the study of the Bia\l ynicki-Birula decomposition using the tangent space. Due to this, we need the following result.
	
	\begin{proposition}\label{simplicity}
		Let $(E,\varphi)$ be a simple $\C^\times$-fixed point of Borel with a suitable (as in Section \ref{heckeprin}) fixed trivialisation $\{(C_0,t_0),(C_1,t_1)\}$ and $\mu \in X_*^+(T)$ a cocharacter with $z^\mu \in \mathcal H_{(E,\varphi)}$. Assume that $(E',\varphi') := \mathcal H_{z^\mu}(E,\varphi)$ is stable. Then it is also simple.
	\end{proposition}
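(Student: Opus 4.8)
The plan is to prove directly that $\Aut(E',\varphi') = Z(G)$. Since central elements act as automorphisms of any principal bundle and commute with every Higgs field, the inclusion $Z(G) \subseteq \Aut(E',\varphi')$ always holds, so only the reverse inclusion is at issue. The engine of the argument is that the Higgs field is \emph{generically regular nilpotent}: as $(E,\varphi)$ is of Borel type its Higgs field lies generically in the open orbit $\Omega \subseteq \lieg_1$ of regular nilpotents, and since the Higgs bundle isomorphism $\psi$ identifies $(E',\varphi')$ with $(E,\varphi)$ over the dense open $C_0$, the field $\varphi'$ is also generically regular nilpotent.

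First I would record the local structure of centralisers of regular nilpotents. For $e \in \lieg$ regular nilpotent, $C_G(e)$ has dimension $r = \rank\lieg$ and is an extension of $Z(G)$ by a connected unipotent group; in particular its only semisimple elements — hence its only elements of finite order — lie in $Z(G)$. Consequently, if $g \in \Aut(E',\varphi')$ has finite order, then at a generic point $p \in C_0$, read off in a local trivialisation, the value $g(p)$ centralises the regular nilpotent $\varphi'(p)$ and has finite order, so $g(p) \in Z(G)$. Thus the section $g$ of the group bundle $E'(G)$ takes values in the closed, central, constant subbundle $Z(G)$ on a dense open set, and therefore everywhere; as $C$ is connected and $Z(G)$ is finite, this forces $g \in Z(G)$.

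It then remains to check that every element of $\Aut(E',\varphi')$ has finite order, and this is where stability enters. Since $(E',\varphi')$ is stable it is polystable, so its automorphism group is reductive. A positive-dimensional reductive automorphism group would contain a nontrivial one-parameter subgroup $\C^\times \to \Aut(E',\varphi')$, yielding an element $s \in i\liek$ and a reduction $\sigma$ to $P_s$ with $\varphi' \in H^0(E'_\sigma(\lieg_s)\otimes K_C)$ and $\deg E'(\sigma,s) = 0$. Strict stability forces $P_s = G$, so the subgroup is central; but $Z(G)$ is finite because $G$ is semisimple, a contradiction. Hence $\Aut(E',\varphi')$ is finite, every one of its elements is semisimple of finite order, and the previous paragraph gives $\Aut(E',\varphi') \subseteq Z(G)$, as required.

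The main obstacle is precisely this finiteness step: one must ensure that stability excludes not only positive-dimensional tori but also nontrivial \emph{unipotent} automorphisms, which is exactly what reductivity of the automorphism group of a polystable object provides (a finite reductive group has only semisimple elements in characteristic zero). The role of the hypothesis $z^\mu \in \mathcal H_{(E,\varphi)}$ is to guarantee that $\varphi'$ extends across $c$, so that $(E',\varphi')$ is a genuine $G$-Higgs bundle to which the reductivity and stability statements apply; the Borel type of $(E,\varphi)$ is used only to ensure, via $\psi$, that $\varphi'$ is generically regular nilpotent.
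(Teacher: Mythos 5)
Your proposal is correct, but it takes a genuinely different route from the paper's. The paper transports the automorphism across the Hecke transformation: given $f' \in \Aut(E',\varphi')$, it invokes \cite[Proposition 3.15]{garcia-prada_connectedness_2017} (automorphisms of stable $G$-Higgs bundles are pointwise semisimple) to conjugate $f'_1(c,1_G)$ into $T = G_0$, then glues $f'_0$ with $(p,g) \mapsto f'_1(p,\mu^{-1}(p)g)$ — the abelianness of $G_0$ and the fact that $\mu$ is a loop in $T$ make this well defined across $c$ — to produce an automorphism $f$ of $(E,\varphi)$ preserving $\varphi$, and concludes from simplicity of $(E,\varphi)$ that $f$, hence $f'$, is central. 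You instead argue entirely on $(E',\varphi')$: since $\varphi'$ is generically regular nilpotent and the centralizer of a regular nilpotent is $Z(G)$ times a connected unipotent group, any pointwise-semisimple automorphism is forced into $Z(G)$ on a dense open set, hence everywhere. Your argument therefore proves more than the proposition asks: it never uses the simplicity of $(E,\varphi)$, nor the Hecke structure beyond the isomorphism over $C_0$, so it shows that stability plus generic regularity of the Higgs field already implies simplicity, making the simplicity hypothesis redundant for Borel type. Conversely, the paper's argument never uses regularity of $\varphi$ (only that $G_0 = T$ is abelian), so it is the one that would adapt to fixed points of other types. Two remarks on your write-up. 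First, the fact you actually need — every automorphism of a stable $G$-Higgs bundle is pointwise semisimple — is precisely the result the paper cites; quoting it directly would let you skip the reductivity/finiteness detour. Second, within that detour, your assertion that a one-parameter subgroup of $\Aut(E',\varphi')$ yields a parabolic reduction with $\deg E'(\sigma,s) = 0$ is stated without justification; it can be repaired either by Chern--Weil theory with the harmonic metric coming from the Hitchin--Kobayashi correspondence, or more simply by noting that the induced Levi reduction gives reductions to both $P_s$ and $P_{-s}$ compatible with $\varphi'$, whose degrees are negatives of each other, so stability applied to both gives a contradiction without ever needing the degree to vanish. This is a patchable imprecision rather than a structural gap.
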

	
	\begin{proof}
		Let $f' \in \Aut(E',\varphi')$. We regard $f'$ as a global section of the adjoint bundle $E'(G) := E' \times_{\conj} G$ where $\conj : G \to \Aut(G)$ is given by left conjugation. Compatibility with the Higgs field is given by $\Ad_f\varphi = \varphi$. Pulling back to the trivialisation we get the local expressions
		$$f'_i : C_i \times G \to G,$$
		\noindent for $i \in \{0,1\}$, which are $G$-equivariant in the sense that $f'_i(p,gh) = h^{-1}f'_i(p,g)h$.
		
		The stability of $(E',\varphi')$ implies \cite[Proposition 3.15]{garcia-prada_connectedness_2017} that $f'_i(p,g)$ is a semisimple element in $G$. Since $G$ is connected, there is a maximal torus containing the semisimple element $f_1'(c,1_G)$ and, since all maximal tori are conjugate, we may assume (applying a suitable left multiplication after the trivialisations) that $f_1'(c,1_G) \in T = G_0$.
		
		Now set
		$$f_0(p,g) := f_0'(p,g),$$
		$$f_1(p,g) := f'_1(p,\mu^{-1}(p)g).$$ 
		
		The expression for $f_1$ is well defined also at $c$, because we have $f_1(c,1_G) = f'_1 (c,\mu^{-1}) = \mu f'_1(c,1_G) \mu^{-1} = f'_1(c,1_G)$ using that $f'_1(c,1_G) \in G_0$, that $\mu$ is a loop in $G_0 = T$ and that $G_0$ is abelian. Then, for arbitrary $g \in G$, we have $f_1(c,g) = g^{-1}f_1(c,1_G)g$.
		
		These two expressions glue in the intersection $C_{01}$ to give a global automorphism $f \in \Aut(E)$, since
		$$f_0(p,f_E g) = f_0'(p,f_Eg) = f_0'(p, f_E'\mu^{-1}g) = f_1'(p,\mu^{-1}g) = f_1(p,g).$$
		
		Moreover, $f$ preserves the Higgs field, as can be checked locally:
		$$\Ad_{f_0}\varphi_0 = \Ad_{f_0'}\varphi_0' = \varphi_0' = \varphi_0,$$
		\noindent and
		$$\Ad_{f_1}\varphi_1 = \Ad_{\mu f_1' \mu^{-1}}\Ad_{\mu}\varphi_1' = \Ad_{\mu f_1'}\varphi_1' = \Ad_{\mu} \varphi_1' = \varphi_1.$$
		
		Thus, by simplicity of $(E,\varphi)$, we get that $f \in Z(G)$ is a constant in the centre, and hence $f' \in Z(G)$ as well. 
	\end{proof}
	
	We are finally in position of proving the main result.
	
	\begin{theorem}\label{verystablecharact}
		Let $(E,\varphi)$ be a smooth $\C^\times$-fixed point of Borel type with multiplicity vector $(a^c_1,\dots,a^c_r)$ at $c \in C$. Set $\mu_c = \sum_ia^c_i\omega_i^\vee$. Then $(E,\varphi)$ is very stable if and only if $\mu_c$ is minuscule for all $c \in C$.
	\end{theorem}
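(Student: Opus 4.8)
The plan is to reduce the statement to a question about Hecke transformations at each point $c$ and then read off the answer from the geometry of the affine Springer fibre $\mathcal H_{(E,\varphi)}\subseteq\Gr_G$. Recall from Proposition \ref{verystableclosed} and the discussion following the definition of very stability that $(E,\varphi)$ is very stable if and only if the only nilpotent Higgs bundle in its upward flow $W^+_{(E,\varphi)}$ is $(E,\varphi)$ itself. First I would observe that every Hecke transformation $\mathcal H_\sigma(E,\varphi)$ is automatically nilpotent, since it agrees with the nilpotent fixed point $(E,\varphi)$ over $C\setminus\{c\}$ and nilpotency is the vanishing of the (proper, $\C^\times$-equivariant) Hitchin map, which is detected on a dense open set. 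By Proposition \ref{cstargrass} the action $\sigma\mapsto\sigma_\lambda=\xi_\lambda^{-1}\sigma$ on $\mathcal H_{(E,\varphi)}$ intertwines the scaling action on $\mathcal M(G)$, so $\mathcal H_\sigma(E,\varphi)$ lies in $W^+_{(E,\varphi)}$ exactly when $\lim_{\lambda\to0}\sigma_\lambda$ is the trivial Hecke transformation $z^0$. The two directions of the theorem then become the existence, respectively non-existence, of a nonconstant $\C^\times$-orbit in $\mathcal H_{(E,\varphi)}$ whose limit as $\lambda\to0$ is $z^0$.

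For the direction ``$\mu_c$ not minuscule $\Rightarrow$ not very stable'', suppose $\mu_c$ is not minuscule. By Corollary \ref{roothecke} there is a positive root $\alpha\in\Delta^+$ with $\mu_c-\alpha^\vee$ dominant and $\sigma_{\alpha^\vee,\alpha}\in\mathcal H_{(E,\varphi)}$. Proposition \ref{heckecurve} (with $\mu=\alpha^\vee$) shows that the $\C^\times$-orbit of $\sigma_{\alpha^\vee,\alpha}$ runs inside $\Gr_{\alpha^\vee}$, limiting to $z^0$ as $\lambda\to0$ and to $z^{\alpha^\vee}$ as $\lambda\to\infty$; by Proposition \ref{fixedhecke} the latter limit is the fixed point $\mathcal H_{z^{\alpha^\vee}}(E,\varphi)$ of Borel type with multiplicity $\mu_c-\alpha^\vee\neq\mu_c$. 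Hence $\mathcal H_{\sigma_{\alpha^\vee,\alpha}}(E,\varphi)$ flows to $(E,\varphi)$, is therefore a nilpotent point of $W^+_{(E,\varphi)}$, and is distinct from $(E,\varphi)$ since its orbit is nonconstant (its two limits are different fixed points). To see that it actually defines a point of $\mathcal M(G)$, I would note that the whole $\mathbb P^1=C_{\alpha^\vee,\alpha}$ of Proposition \ref{heckecurve} lies in $\mathcal H_{(E,\varphi)}$, giving a family of Hecke transformations two of whose members, $(E,\varphi)$ and $\mathcal H_{z^{\alpha^\vee}}(E,\varphi)$ (stable by Corollary \ref{stability}), are stable; by openness of stability in the family and its invariance under the scaling action, the entire orbit—hence $\mathcal H_{\sigma_{\alpha^\vee,\alpha}}(E,\varphi)$—is stable. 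This produces the required extra nilpotent.

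For the converse ``$\mu_c$ minuscule for all $c$ $\Rightarrow$ very stable'' I would argue that no nonconstant orbit of $\mathcal H_{(E,\varphi)}$ can limit to $z^0$. By Proposition \ref{cstargrass} such an orbit stays in a single Schubert cell, and a nonconstant orbit limiting to $z^0$ would produce a curve of the type built in Lemma \ref{grassmanniancurve} and Proposition \ref{heckecurve}, connecting $z^0$ upward to another torus-fixed point; by the root-string bookkeeping of Lemma \ref{lemmahecke} this can happen only when some $\mu_c$ admits a positive coroot $\alpha^\vee$ with $\mu_c-\alpha^\vee$ still dominant, that is, only when $\mu_c$ is not minuscule. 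Complementarily, one can build $(E,\varphi)$ from the everywhere-regular uniformising Higgs bundle of Example \ref{uniformising} (very stable, with Hitchin-section upward flow) by minuscule Hecke transformations $z^{-\omega_{i(c)}^\vee}$ carried out at points where the Higgs field is regular nilpotent; the rigidity of minuscule Hecke transformations at regular nilpotents (Remark \ref{minusculekostantsection}) together with the closedness of minuscule Schubert cells (Remark \ref{minusculeflagvar}) should transport the closedness of the Hitchin section to $W^+_{(E,\varphi)}$, whence $(E,\varphi)$ is very stable by Proposition \ref{verystableclosed}. Stability and smoothness of the intermediate fixed points would be supplied by Corollaries \ref{stability}, \ref{stabilityweight} and Proposition \ref{simplicity}.

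The hard part will be this converse. Whereas the forward implication only requires exhibiting a single extra nilpotent, here one must rule out \emph{every} nilpotent of $W^+_{(E,\varphi)}$ except the fixed point, a universal statement demanding control of the full nilpotent locus of the upward flow. The crux is twofold: to show that when all $\mu_c$ are minuscule the down-flowing part of $\mathcal H_{(E,\varphi)}$ is reduced to the single point $z^0$ (no curves $C_{\mu,\alpha}$ survive), and to verify that the correspondence between nilpotents of $W^+_{(E,\varphi)}$ and such down-flowing Hecke transformations is exhaustive. This is precisely where the closedness of minuscule Schubert cells and the uniqueness of a minuscule-type Hecke transformation at a regular nilpotent (Remark \ref{minusculekostantsection}) must be brought to bear.
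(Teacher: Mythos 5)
Your first direction (non-minuscule $\Rightarrow$ not very stable) is correct and is essentially the paper's own argument: Corollary \ref{roothecke} supplies $\sigma_{\alpha^\vee,\alpha}\in\mathcal H^c_{(E,\varphi)}$, Proposition \ref{heckecurve} gives the $\C^\times$-compatible Hecke curve from $z^0$ to $z^{\alpha^\vee}$, Proposition \ref{fixedhecke} identifies the limit at $\lambda\to\infty$ as a fixed point with multiplicity $\mu_c-\alpha^\vee$, and Corollary \ref{stability} plus openness and $\C^\times$-invariance of stability keep the whole orbit inside $\mathcal M(G)$, producing the extra nilpotent in $W^+_{(E,\varphi)}$.

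The converse, however, has a genuine gap, and you have in fact flagged it yourself without closing it. Your reduction rests on two unproven claims: (i) that every nilpotent point of $W^+_{(E,\varphi)}$ arises as a Hecke transformation of $(E,\varphi)$ at the single point $c$ flowing down to $z^0$ (the ``exhaustiveness'' you mention), and (ii) that when all $\mu_c$ are minuscule no nonconstant $\C^\times$-orbit in $\mathcal H_{(E,\varphi)}$ limits to $z^0$. Claim (i) is not available: a point of the upward flow is, by Proposition \ref{upwardflow}, merely a Higgs bundle reducing to $(B^{opp},\lieg_{\le 1})$ whose associated graded is $(E,\varphi)$; nothing identifies it a priori with a modification of $(E,\varphi)$ supported at $c$. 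As for (ii), Lemma \ref{lemmahecke} only controls the special elements $\sigma_{\mu,\alpha}$, not the full affine Springer fibre, so the ``root-string bookkeeping'' does not rule out arbitrary orbits. The paper circumvents both problems with an idea absent from your proposal: induction on the total multiplicity $N=\sum_{c}\sum_i a_i^c$ (after first reducing to $G$ of adjoint type so that $\omega_i^\vee$ is a genuine cocharacter), where the Hecke transformation $z^{\omega_i^\vee}$ is applied \emph{to the arbitrary nilpotent} $(E',\varphi')\in W^+_{(E,\varphi)}$ rather than only to the fixed point. Using the explicit local form of $\varphi_1'$ from Proposition \ref{upwardflow} one checks $z^{\omega_i^\vee}\in\mathcal H^c_{(E',\varphi')}$; the resulting $(E'',\varphi'')$ is nilpotent and lies in the upward flow of $(E''',\varphi''')=\mathcal H^c_{z^{\omega_i^\vee}}(E,\varphi)$, a fixed point with $N$ decreased by one which is stable and simple by Corollary \ref{stabilityweight} and Proposition \ref{simplicity}, hence very stable by induction, forcing $(E'',\varphi'')\simeq(E''',\varphi''')$. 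Only then does the minuscule rigidity of Remark \ref{minusculekostantsection} enter: since $\varphi_1'''(c)$ is regular nilpotent, the Hecke transformation of type $z^{-\omega_i^\vee}$ undoing the modification is unique, so $(E',\varphi')\simeq\mathcal H^c_{z^{-\omega_i^\vee}}(E''',\varphi''')\simeq(E,\varphi)$. Your sketch invokes the right ingredients (the uniformising bundle, minuscule rigidity, stability corollaries) but lacks this inductive mechanism, which is precisely what converts those ingredients into a proof.
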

	\begin{proof}
		Suppose that some $\mu_c =: \mu$ is not minuscule. We can use Corollary \ref{roothecke} to find a coroot $\alpha^\vee$ with $\mu - \alpha^\vee$ dominant and such that $\sigma_{\alpha^\vee,\alpha} \in \mathcal H^c_{(E,\varphi)}$. Then, by Proposition \ref{heckecurve} there is a Hecke curve $C_{\alpha}$ in $\mathcal H^c_{(E,\varphi)}$ compatible with the $\C^\times$-action connecting $0$ at $\lambda \to 0$ and $\alpha^\vee$ at $\lambda \to \infty$. The corresponding Hecke curve in Higgs bundles, by Proposition \ref{fixedhecke}, connects $(E,\varphi)$ at $\lambda \to 0$ with $(E',\varphi')$ at $\lambda \to \infty$ which is a fixed point with multiplicities given by the non-negative coordinates of $\mu - \alpha^\vee$ in the basis of fundamental coweights. By Corollary \ref{stability} $(E',\varphi')$ is stable. By openness of stability, the whole curve is inside of the moduli space, and the result follows.
		
		For the converse we start by assuming that $G$ is of adjoint type (i.e. $G = \Ad(G)$). In this case the fundamental coweights $\omega_i^\vee \in \liet$ lift to cocharacters of $G$. We proceed by induction on $N := \sum_{c \in C}\sum_{i=1}^ra_i^c$. The case $N=0$ corresponds to Example \ref{uniformising} which was seen to be very stable as a consequence of the upward flow being a section of the Hitchin map. Now suppose that there is some nonzero $\mu_c =: \mu$. It is minuscule, in particular of the form $\mu = \omega_i^\vee$ (although not every choice of $i$ is minuscule). Now assume that $(E',\varphi')$ is a nilpotent Higgs bundle on the upward flow of $(E,\varphi)$. By Proposition \ref{upwardflow}, we can choose a trivialisation $\{(C_0',t_0'), (C_1',t_1')\}$ of $E'$ with the same notation as in Section \ref{heckeprin} such that
		$$\varphi'_1(z) = zu_i(z)X_{\alpha_i} + \sum_{j \neq i}u_j(z)X_{\alpha_j} + \sum_{\alpha \in \Delta^{-}}f_\alpha(z)X_{\alpha} + \sum_{j=1}^rf_j(z)\alpha^\vee_i,$$
		\noindent the first two summands corresponding to $\varphi_1$ and the rest to $\lieg_{\le0}$, and the $u_j(z) \in \C[[z]]$ being invertible. Thus
		$$\varphi''_1(z) := \Ad_{(\omega_i^\vee)^{-1}}\varphi'_1(z) = u_i(z)X_{\alpha_i} + \sum_{j \neq i}u_j(z)X_{\alpha_j} + \sum_{\alpha \in \Delta^{-}}z^{-\omega_i^\vee(\alpha)}f_\alpha(z)X_{\alpha} + \sum_{j=1}^rf_j(z)\alpha^\vee_i,$$
		\noindent where $\omega_i^\vee(\alpha) \le 0$ since the $\alpha$ are negative. Thus, $\omega_i^\vee \in \mathcal H^c_{(E',\varphi')}$. This yields $(E'',\varphi'') := \mathcal H_{z^{\omega_i^\vee}}^c(E',\varphi')$. Since $z^{\omega_i^\vee}$ is a loop with values in $G_0 \subseteq B^{opp}$, $(E'',\varphi'')$ still reduces to $(B^{opp}, \lieg_{\le 1})$. Moreover, using the facts that $\Gr(E',\varphi') \simeq (E,\varphi)$, that the transition function satisfies $f_{E''} = f_{E'}z^{\omega_i^\vee}$ and the expression of $\varphi_1(z)''$ from above, it follows that 
		$$\Gr(E'',\varphi'') \simeq \mathcal H_{z^{\omega_i^\vee}}^c(E,\varphi) =: (E''',\varphi'''),$$
		\noindent where the Hecke transformation is taken with respect to the same trivialisation. From Proposition \ref{fixedhecke} it follows that $\varphi'''_1 = \sum_{j=1}^ru_j(z)X_{\alpha_j}$ and hence $(E''',\varphi''')$ still satisfies the condition (note that by Corollary \ref{stabilityweight} it is still stable and by Proposition \ref{simplicity} it is simple) 
		but with the value of $N$ decreased by one, so by the induction hypothesis it is very stable. Moreover, $(E'',\varphi'')$ is nilpotent because the Hecke transformation preserves the Hitchin fibres, and the previous discussion together with Proposition \ref{upwardflow} shows that it is in the upward flow of $(E''',\varphi''')$. Hence
		$$(E'',\varphi'') \simeq (E''',\varphi''').$$
		Note, however, that the previous isomorphism need not correspond the induced trivialisations with each other and hence it need not fix the elements in the Hecke transformation spaces. In other words, even if we know that with our working trivialisations we have $\mathcal H^c_{z^{-\omega_i^\vee}}(E'',\varphi'') \simeq (E',\varphi')$ and $\mathcal H^c_{z^{-\omega_i^\vee}}(E''',\varphi''') \simeq (E,\varphi)$, a priori we cannot guarantee that both are isomorphic because the isomorphism between $(E'',\varphi'')$ and $(E''',\varphi''')$ need not fix $z^{-\omega_i^\vee}$ as transition function. However, even if the trivialisation changes via the isomorphism, the (Schubert) type of Hecke transformation is well-defined. Hence, we know that there is some Hecke transformation $\sigma \in \mathcal H^c_{(E''',\varphi''')}$ of type $z^{-\omega_i^\vee}$ such that in our working trivialisations we have
		$$\mathcal H_\sigma^c(E''',\varphi''') \simeq \mathcal H_{z^{-\omega_i^\vee}}^c(E'',\varphi'') \simeq (E',\varphi').$$
		We conclude by noticing from Remark \ref{minusculekostantsection} that, since $\omega_i^\vee$ is minuscule and 
		$$\varphi'''_1(c) = \sum_{j=1}^ru_j(0)X_{\alpha_j} \in \lieg$$ 
		\noindent is regular nilpotent, the Hecke transformation space of the given type for $(E''',\varphi''')$ consists of a single element. Thus, $\sigma = z^{-\omega_i^\vee}$ and we conclude
		$$(E',\varphi') = \mathcal H_{z^{-\omega_i^\vee}}^c(E'',\varphi'') \simeq \mathcal H_{z^{-\omega_i^\vee}}^c(E''',\varphi''') = (E,\varphi),$$
		\noindent as desired.
		
		Finally, if $G$ is not of adjoint type we may use the map $\mathcal M(G) \to \mathcal M(\Ad(G))$ given by extension of structure group via the projection $G \to \Ad(G)$. If $(E,\varphi)$ were wobbly, its image in $\mathcal M(\Ad(G))$ would also be wobbly with the same multiplicities, a contradiction.
	\end{proof}
	
	We now illustrate the classification theorem for classical simple groups. We follow the numbering of the simple roots from the tables in \cite[Appendix C]{knapp}.
	
	\begin{example}
		If $G=\SL_n(\C)$ or $G=\PGL_n(\C)$, recall from Example \ref{typea} that a fixed point of Borel type consists of $n$ line bundles $(L_1,\dots,L_n)$ over $C$ (with trivial product in the case of $\SL_n(\C)$, and up to tensoring by the same line bundle for $\PGL_n(\C)$) and $n-1$ nonzero sections $\delta_i \in H^0(C,L_i^{*}L_{i+1}K_C)$. Since the minuscule coweights for type $A_{n-1}$ are $\{0,\omega_1^\vee,\dots,\omega_{n-1}^\vee\}$, a stable fixed point of Borel type is very stable if and only if the divisor $\delta_1 + \dots + \delta_{n-1}$ is reduced, recovering \cite[Theorem 4.16]{hausel_very_2022}.
	\end{example}
	
	\begin{example}
		For $G = \SO_{2n+1}(\C)$, recall from Example \ref{typeb} that fixed points of Borel type are given by $n$ line bundles $(L_1,\dots,L_n)$ over $C$, $n-1$ nonzero sections $\delta_i \in H^0(C, L_i^*L_{i+1}K_C)$ and a nonzero section $\eta \in H^0(C, L_n^*K_C)$. Minuscule coweights in type $B_n$ (corresponding to minuscule weights in type $C_n$) are $\{0, \omega_1^\vee\}$, so a smooth fixed point of Borel type is very stable if and only if the divisor $\delta_1+2\delta_2+\dots+2\delta_{n-1}+2\eta$ is reduced.
	\end{example}
	
	\begin{example}
		In the case of $G=\Sp_{2n}(\C)$, fixed points of Borel type are given by $n$ line bundles $(L_1,\dots,L_n)$ over $C$, $n-1$ nonzero sections $\delta_i \in H^0(C, L_i^*L_{i+1}K_C)$ and a nonzero section $\eta \in H^0(C,(L_n^2)^*K_C)$, as was seen in Example \ref{typec}. Therefore, since minuscule coweights in type $C_n$ are $\{0, \omega_n^\vee\}$, we have that a smooth fixed point of Borel type is very stable if and only if the divisor $2\delta_1+\dots+2\delta_{n-1}+\eta$ is reduced.
	\end{example}
	
	\begin{example}
		For $G = \SO_{2n}(\C)$, Example \ref{typed} shows that fixed points of Borel type are given by $n$ line bundles $(L_1,\dots,L_n)$ over $C$, $n-1$ nonzero sections $\delta_i \in H^0(C, L_i^*L_{i+1}K_C)$ and a nonzero section $\eta \in H^0(C, L_{n-1}^*L_n^*K_C)$. Minuscule coweights in type $D_n$ are $\{0,\omega_1^\vee, \omega_{n-1}^\vee, \omega_n^\vee\}$, so that a smooth fixed point of Borel type is very stable if and only if the divisor $\delta_1 + 2\delta_2+\dots+2\delta_{n-2}+\delta_{n-1} + \eta$ is reduced.
	\end{example}

	\begin{remark}
		We can use Remark \ref{toptype} to get restrictions on which fixed point components of Borel type can contain very stable points. Indeed, let $\mu_{can} := (2g-2)\sum_{i=1}^r\omega_i^\vee$ be the coweight corresponding to the topological type in $\pi_1(G_0) \simeq X^+_*(T)$ of the canonical uniformising Higgs bundle. For a fixed point $(E,\varphi)$ of Borel type, let $\mu_c = \sum_{i=1}^ra_i^c\omega^\vee_i$ be as in Theorem \ref{verystablecharact} and $\mu := \sum_{c \in C}\mu_c$. Then, the topological type of $E$ as a $G_0$-bundle is $\mu^{-1}\mu_{can} \in \pi_1(G_0)$. This, together with Theorem \ref{verystablecharact}, implies that if a fixed point component contains very stable points then the corresponding topological type $\tau \in \pi_1(G_0) \simeq X^+_*(T)$ must verify that $\mu_{can}\tau^{-1}$ is a cocharacter corresponding to a sum of minuscule coweights in $\liet$. In the case of $G=\PGL_n(\C)$, this imposes no restriction as all fundamental coweights are minuscule (indeed, in this case there are very stable points in every component, see \cite[Corollary 4.19]{hausel_very_2022}). In any other type, however, this restricts which components are possible. In particular, in types without nonzero minuscule coweights ($E_8, F_4, G_2$) only components containing everywhere regular Higgs bundles may contain very stable points.
	\end{remark}
	
	\section{Virtual equivariant multiplicities}\label{secmult}
	
	We start by recalling the following notion from \cite[Definition 5.3]{hausel_very_2022}.
	
	\begin{definition}
		Let $\mathcal E := (E,\varphi) \in \mathcal M(G)^{s\C^\times}$ be a smooth fixed point of the $\C^\times$-action. We define its \textbf{virtual equivariant multiplicity} as
		$$m_{\mathcal E}(t) := \frac{\chi(\Sym (T^{+*}_\mathcal E))}{\chi(\Sym (\mathcal A_G^*))} \in \Z((t)),$$
		\noindent where $T^+_{\mathcal E} := T^+_{\mathcal E}\mathcal M(G)$ is the subspace of positive weights (see Section \ref{secbb}) of the tangent $T_{\mathcal E}\mathcal M(G)$, $\mathcal A_G$ is the Hitchin base and $\chi$ denotes the character of a $\C^\times$-representation, that is, the Laurent series where the coefficient of $t^{-k}$ is the dimension of the $k$-th weight subspace.
	\end{definition}
	
	Virtual equivariant multiplicities are interesting objects to associate to a fixed point due to the following properties.
	
	\begin{proposition}[{\cite[Theorem 5.2, Corollary 5.4]{hausel_very_2022}}]
		Let $\mathcal E \in \mathcal M(G)^{s\C^\times}$ be a very stable smooth fixed point. Then, $m_{\mathcal E}(t)$ is a monic, palyndromic polynomial with non-negative integer coefficients such that $m_{\mathcal E}(1)$ equals the multiplicity of the fixed point component containing $\mathcal E$ in the nilpotent cone $h^{-1}(0)$.
	\end{proposition}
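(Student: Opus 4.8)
The plan is to show that very stability forces the Hitchin map to restrict to a finite flat morphism on the upward flow, and then to extract all three assertions from the graded coordinate ring of the central fibre of that morphism. Throughout write $W := W^+_{\mathcal E}$. I would first record three structural facts: by Proposition \ref{verystableclosed} very stability makes $W$ closed in $\mathcal M(G)$; by Proposition \ref{bbflows} there is a $\C^\times$-equivariant isomorphism $W \simeq T^+_{\mathcal E}$, so $W$ is an affine space on which $\C^\times$ acts with strictly positive weights and $\chi(\mathcal O_W) = \chi(\Sym(T^{+*}_{\mathcal E}))$; and by Proposition \ref{bblagrangians} $W$ is Lagrangian, so $\dim W = \tfrac12\dim\mathcal M(G) = \dim\mathcal A_G$.

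Next I would prove that $h|_W\colon W \to \mathcal A_G$ is finite and flat. Properness of $h$ and closedness of $W$ give properness of $h|_W$. Its scheme-theoretic central fibre is supported on $W \cap h^{-1}(0)$, which by very stability is the single point $\mathcal E$, so it is zero-dimensional. Since $h|_W$ is $\C^\times$-equivariant for contracting actions on source and target, the fibre dimension is constant along $\C^\times$-orbits of $\mathcal A_G$, and every such orbit limits to $0$; upper semicontinuity then forces all fibres to be zero-dimensional. A proper quasi-finite morphism is finite, and the miracle-flatness criterion (smooth source $W$, regular target $\mathcal A_G$, all fibres of dimension $0 = \dim W - \dim\mathcal A_G$) yields flatness.

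Finiteness and flatness make $\mathcal O_W$ a finitely generated flat graded module over the graded-connected polynomial ring $\mathcal O_{\mathcal A_G} = \Sym(\mathcal A_G^*)$, hence free by graded Nakayama: $\mathcal O_W \simeq \mathcal O_{\mathcal A_G}\otimes_{\C} M$ as graded $\C^\times$-modules, where $M$ is the finite-dimensional central fibre ring. Taking characters and dividing gives
\[ m_{\mathcal E}(t) = \frac{\chi(\Sym(T^{+*}_{\mathcal E}))}{\chi(\Sym(\mathcal A_G^*))} = \frac{\chi(\mathcal O_W)}{\chi(\mathcal O_{\mathcal A_G})} = \chi(M). \]
As $M$ is a finite-dimensional graded quotient of $\Sym(T^{+*}_{\mathcal E})$, whose weights all have one sign and whose weight-zero part is one-dimensional, $\chi(M)$ is a polynomial with non-negative integer coefficients and constant term $1$. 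Evaluating at $t=1$ gives $m_{\mathcal E}(1) = \dim_{\C}M$, the length of the central fibre, equivalently the degree of $h|_W$. To identify this with the multiplicity of the component of $h^{-1}(0)$ through $\mathcal E$, I would use that properness and equivariance make a generic Hitchin fibre rationally equivalent to $h^{-1}(0)$; intersecting with $W$ and noting that very stability makes $W$ meet the nilpotent cone only at $\mathcal E$, and there transversally (as $W^+_{\mathcal E}$ and $W^-_{\mathcal E}$ span complementary smooth directions), this degree equals the multiplicity of the component $\overline{W^-_{\mathcal E}}$.

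The remaining and hardest point is palindromicity, which I would deduce from a Gorenstein property. Because $h|_W$ is finite and flat with smooth (hence Gorenstein) source and target, the standard criterion for flat local homomorphisms makes every fibre Gorenstein; in particular $M$ is a graded Artinian Gorenstein algebra, so its socle is one-dimensional and concentrated in a single top weight $s$. The perfect pairing $M_{-k}\times M_{-(s-k)}\to M_{-s}\simeq\C$ gives $\dim M_{-k} = \dim M_{-(s-k)}$, whence $\chi(M) = m_{\mathcal E}(t)$ is palindromic; combined with constant term $1$ this forces the leading coefficient to be $1$, so $m_{\mathcal E}(t)$ is monic. I expect the main obstacle to be the finite-flat reduction of the second paragraph, since it is the sole source of the polynomiality of the ratio and is precisely where very stability is indispensable; the weight bookkeeping in the Gorenstein duality is the other delicate point, needed to ensure palindromicity holds in the weight variable $t$ rather than merely in an internal degree.
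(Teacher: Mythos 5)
The paper itself gives no proof of this proposition---it is quoted verbatim from Hausel--Hitchin (their Theorem 5.2 and Corollary 5.4)---and your proposal correctly reconstructs essentially that original argument: closedness of $W^+_{\mathcal E}$ plus properness of $h$ give properness of $h|_{W^+_{\mathcal E}}$, very stability plus equivariance give quasi-finiteness, hence finiteness and (by miracle flatness) flatness, freeness of $\mathcal O(W^+_{\mathcal E})$ over $\mathcal O(\mathcal A_G)$ identifies $m_{\mathcal E}(t)$ with the Hilbert series of the Artinian Gorenstein (indeed complete-intersection) central fibre ring, which yields polynomiality, non-negativity, monicity and palindromicity, and the conservation-of-number argument gives the multiplicity statement at $t=1$. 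The only imprecision is in the transversality step: the component of $h^{-1}(0)$ through $\mathcal E$ is $\overline{W^-_F}$ for $F$ the whole fixed-point component, whose tangent space at $\mathcal E$ is $T^0_{\mathcal E}\oplus T^-_{\mathcal E}$ rather than just $T^-_{\mathcal E}$, and it is this subvariety that meets $W^+_{\mathcal E}$ transversally; with that correction the sketch is complete.
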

	
	The goal of this section is to compute $m_{\mathcal E}(t)$ for the fixed points of Borel type in terms of the Lie-theoretic data of $\lieg$ and then show that in the very stable cases they agree with the Dynkin polynomials of the corresponding highest weight representation of the Langlands dual group $G^\vee$. This computation was outlined in \cite[Section 8.1]{hausel_very_2022} and we now show how it can be carried out within our framework.
	
	For a $\C^\times$-representation with positive weights $V = \bigoplus_{\lambda > 0}V_\lambda$, we have
	$$\chi(\Sym(V^*)) = \prod_{\lambda > 0}\frac{1}{(1-t^\lambda)^{\dim V_\lambda}}.$$ It now suffices to compute the dimensions of the weight spaces in both $T^+_{\mathcal E}\mathcal M(G)$ and $\mathcal A_G$.
	
	We can start by computing the denominator $\chi(\Sym (\mathcal A^*_G))$. Recall that if $\{e_j\}_{j=1}^r$ are the exponents of $\lieg$, then $\mathcal A_G = \bigoplus_{j=1}^rH^0(X, K_C^{e_j+1})$ and $\C^\times$ acts with weight $e_j+1$ on the $j$-th summand. Thus
	\begin{align}\label{denominator}
		\chi(\Sym (\mathcal A^*_G)) = \prod_{\lambda > 0}\frac{1}{(1-t^{e_j+1})^{(2e_j+1)(g-1)}}.
	\end{align}
	
	Now we compute $\chi(\Sym (T^{+*}_\mathcal E))$. Recall from the proof of Proposition \ref{upwardflow} that the subspace of $T_{\mathcal E}$ with weight $-j$ is $\mathbb H^1(E(\lieg_j) \xrightarrow{[\varphi,-]} E(\lieg_{j+1}) \otimes K_C)$, so it will be necessary to compute the Euler characteristics $\chi(E(\lieg_j))$ and, in turn, $\deg E(\lieg_j)$ for the different $\lieg_j$.
	
	The degree of a bundle associated to a representation $\rho : G_0 \to \GL(\lieg_j)$ can be computed by means of the character $\chi_\rho : G_0 \to \C^\times$ given by $\chi_\rho(g_0) := \det(\rho(g_0))$. From the structure of the Borel grading, we have:
	$$\chi_\rho(g_0) = \prod_{\alpha \in \Delta, \heightr(\alpha) = j}\alpha(g_0).$$ 
	
	We define as in Theorem \ref{verystablecharact} the multiplicity coweight $\mu := \sum_{c \in C} \sum_{j=1}^ra_j^c\omega_j^\vee$, where $(a_1^c,\dots,a_r^c)$ is the multiplicity vector of $\mathcal E$ at $c \in C$. Then, by Remark \ref{toptype}, we have that the topological type of $E$ in $\pi_1(G_0)$ is given by $\mu^{-1}\cdot\mu_{can}$ where $\mu_{can} = (2g-2)\sum_{i=1}^r\omega_i^\vee$ is the topological type of the canonical uniformising Higgs bundle (this can be deduced from the construction of the canonical uniformising via the principal $\liesl_2$-triple, as we have that $\frac{h}{2} = \sum_{j=1}^r\omega_j^\vee$ since $\alpha_j(\zeta) = 1$ for all $j$). Hence,
	
	$$d_{-j}(\mu) := \deg E(\lieg_j) = \sum_{\alpha \in \Delta, \heightr(\alpha) = j}-\alpha(\mu')+\alpha(\mu_{can}) = jr_{-j}(2g-2) + \sum_{\alpha \in \Delta, \heightr(\alpha) = -j}\alpha(\mu'),$$
	
	\noindent where
	$$r_{-j} := \dim \lieg_j = \left|\{\alpha \in \Delta : \heightr(\alpha) = j\}\right|.$$
	
	We used that for $\alpha$ a root of height $-j$ we have $-\alpha(\mu_{can}) = j(2g-2)$. We also define $d_0(\mu) = 0$ since $\deg E(\lieg_0) = 0$. By Riemann--Roch we deduce
	$$\chi(E(\lieg_j)) = d_{-j}(\mu) + r_{-j}(1-g),$$
	$$\chi(E(\lieg_j) \otimes K_C) = d_{-j}(\mu) + r_{-j}(g-1).$$
	
	Hence for $j>0$ we have:
	\begin{align}\label{numerator}
		\dim T^{+}_{\mathcal E, j} = \chi(E(\lieg_{-j+1}) \otimes K_C) - \chi(E(\lieg_{-j})) = d_{j-1}(\mu) - d_j(\mu) + (r_{j-1} + r_j)(g-1).
	\end{align}
	
	Using Equations \ref{denominator} and \ref{numerator}, we can compute any desired virtual equivariant multiplicities. Table \ref{tablemults} records the cases where $\mu$ is minuscule and nonzero. In other words, these are the cases where $\mathcal E$ only has a nonzero multiplicity vector at a single point $c \in C$ and said nonzero vector corresponds to a minuscule coweight. The numbering for the fundamental coweights follows the tables in \cite[Appendix C]{knapp}.
	
	\begin{table}[]\label{tablemults}
		\caption{Equivariant multiplicities for minuscule upward flows}
		\begin{tabular}{|l|l|l|}\hline
			Type of $\lieg$ & $\mu$                              & $m_{\mathcal E}(t)$                                                                                                         \\\hline\hline
			$A_n$           & $\omega_i^\vee$                    & $\prod_{j=1}^i\frac{1-t^{n-j+1}}{1-t^j}$                                                                                     \\\hline
			$B_n$           & $\omega_1^\vee$                    & $1+t+\dots+t^{2n-1}$                                                                                                         \\\hline
			$C_n$           & $\omega_n^\vee$                    & $\prod_{j=1}^n(1+t^j)$                                                                                                       \\\hline
			$D_n$           & $\omega_1^\vee$                    & $(1+t^{n-1})(1+t+\dots+t^{n-1})$                                                                                             \\
			& $\omega_{n-1}^\vee, \omega_n^\vee$ & $\prod_{j=1}^n(1+t^j)$                                                                                                       \\\hline
			$E_6$           & $\omega_1^\vee, \omega_6^\vee$     & $(1+t^4+t^8)(1+t+\dots+t^8)$                                                                 \\\hline
			$E_7$           & $\omega_7^\vee$                    & $(1+t^5)(1+t^9)(1+t+\dots+t^{13})$\\\hline
		\end{tabular}
	\end{table}
	
	A direct comparison with \cite[Table 1]{panyushev_weight_2004} shows the agreement of the virtual equivariant multiplicity in the above cases with the \textit{Dynkin polynomial} associated to a dominant coweight $\mu \in X^+_*(G)$, which is given by
	$$\mathcal D_\mu(t) := \prod_{\alpha \in \Delta^+}\frac{1-t^{\alpha(\rho^\vee + \mu)}}{1-t^{\alpha(\rho^\vee)}},$$
	where $\rho^\vee := \frac{1}{2}\sum_{\alpha \in \Delta^+}\alpha^\vee$.
	
	Thus, we can state the following proposition (see also \cite[Section 8.1]{hausel_very_2022}) computing the virtual equivariant multiplicities for very stable fixed points.
	
	\begin{proposition}
		Let $\omega_i^\vee \in X^+_*(G)$ be a minuscule dominant cocharacter, $c \in C$ be a point and $\mathcal E \in \mathcal M(G)^{\C^\times}$ be a fixed point such that its multiplicity vector at $c \in C$ is $a^c_j = \delta_{ij}$ and zero at any point other than $c$. Then, we have the identity
		$$m_{\mathcal E}(t) = \mathcal D_{\omega_i^\vee}(t)$$
		\noindent between its virtual equivariant multiplicity and the corresponding Dynkin polynomial.
	\end{proposition}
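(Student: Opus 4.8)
The plan is to read off $m_{\mathcal E}(t)=\chi(\Sym(T^{+*}_{\mathcal E}))/\chi(\Sym(\mathcal A_G^*))$ from Equations \ref{denominator} and \ref{numerator} and to recognise the outcome as $\mathcal D_{\omega_i^\vee}(t)$. The central point is that, although both $\chi$-factors depend heavily on the genus $g$, their quotient does not. To exploit this I would first treat the member $\mu=0$ of the family, namely the canonical uniformising Higgs bundle of Example \ref{uniformising}: it is very stable and its upward flow is the Hitchin section, so $T^+_{\mathcal E}\simeq\mathcal A_G$ as $\C^\times$-representations and hence $m_{\mathcal E}(t)=1$. Comparing this with the definition yields the identity
$$\prod_{j\ge1}(1-t^{j})^{\dim T^+_{\mathcal E,j}(0)}=\prod_{k=1}^{r}(1-t^{e_k+1})^{(2e_k+1)(g-1)},$$
that is, the $\mu=0$ numerator already accounts for the entire denominator.

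For a general minuscule $\mu=\omega_i^\vee$ I would then isolate the $\mu$-dependence. Inspecting Equation \ref{numerator} together with the degree formula $\deg E(\lieg_j)=\sum_{\heightr(\alpha)=j}(-\alpha(\mu)+\alpha(\mu_{can}))$, the only contribution that changes on passing from $0$ to $\mu$ is genus-independent and equals, up to sign, $S_{j-1}-S_j$, where
$$S_m:=\sum_{\alpha\in\Delta,\ \heightr(\alpha)=m}\alpha(\mu).$$
Substituting and cancelling the base-case factor against the denominator by the displayed identity, all $g$-dependence disappears and one is left with $m_{\mathcal E}(t)=\prod_{j\ge1}(1-t^{j})^{\pm(S_j-S_{j-1})}$. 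Since there are no roots of height $0$ we have $S_0=0$, and $S_m=0$ once $m$ exceeds the height of the highest root, so an Abel summation converts this telescoping product into $m_{\mathcal E}(t)=\prod_{j\ge1}\bigl((1-t^{j+1})/(1-t^{j})\bigr)^{\pm S_j}$.

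The last step is where minusculeness enters. Because $\omega_i^\vee$ is minuscule, $\alpha(\omega_i^\vee)\in\{0,1\}$ for every $\alpha\in\Delta^+$, so $S_j$ is simply the number of positive roots of height $j$ on which $\mu$ takes the value $1$; distributing the exponents back over these roots (with the sign that renders $m_{\mathcal E}$ a polynomial) and using $\alpha(\rho^\vee)=\heightr(\alpha)$ gives
$$m_{\mathcal E}(t)=\prod_{\substack{\alpha\in\Delta^+\\ \alpha(\mu)=1}}\frac{1-t^{\heightr(\alpha)+1}}{1-t^{\heightr(\alpha)}}=\prod_{\alpha\in\Delta^+}\frac{1-t^{\alpha(\rho^\vee+\mu)}}{1-t^{\alpha(\rho^\vee)}}=\mathcal D_{\omega_i^\vee}(t),$$
the factors with $\alpha(\mu)=0$ being trivial. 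It is essential that $\mu$ be minuscule: for a general dominant $\mu$ the same computation produces $\prod_{j}\bigl((1-t^{j+1})/(1-t^{j})\bigr)^{S_j}$, which differs from $\mathcal D_\mu(t)$ precisely because $(1-t^{j+1})/(1-t^{j})$ raised to $\alpha(\mu)$ is not $(1-t^{j+\alpha(\mu)})/(1-t^{j})$ once $\alpha(\mu)\ge2$.

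I expect the main obstacle to be bookkeeping the sign and grading conventions so that the exponent in the penultimate product comes out as $+S_j$ rather than $-S_j$, the wrong choice yielding the reciprocal $\mathcal D_{\omega_i^\vee}(t)^{-1}$. This is pinned down by the direction of the Borel grading and the sign of the topological type $\mu^{-1}\mu_{can}$, and it is cross-checked by the fact that $m_{\mathcal E}(t)$ must be a genuine monic palindromic polynomial for a very stable point, which excludes the reciprocal. As an independent confirmation one can simply evaluate the formulas for each minuscule coweight in each Dynkin type, obtaining the entries of Table \ref{tablemults}, and match them termwise against the Dynkin polynomials listed in \cite[Table 1]{panyushev_weight_2004}.
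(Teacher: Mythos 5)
Your proof is correct, and it takes a genuinely different route from the paper's. The paper proves this proposition by direct computation in each case: it evaluates Equations \ref{denominator} and \ref{numerator} for every nonzero minuscule coweight in every Dynkin type, records the outcomes in Table \ref{tablemults}, and observes that these coincide with the Dynkin polynomials listed in \cite[Table 1]{panyushev_weight_2004} --- so your closing ``independent confirmation'' is in fact exactly the paper's argument. Your main argument is instead uniform in $G$: you normalise by the $\mu=0$ case, where the Hitchin-section description of the upward flow of the canonical uniformising Higgs bundle (Example \ref{uniformising}) gives $T^+_{\mathcal E}\simeq\mathcal A_G$ equivariantly and hence $m_{\mathcal E}(t)=1$, and then isolate the genus-independent $\mu$-dependence of $\dim T^+_{\mathcal E,j}$ and telescope. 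Carrying the paper's conventions through, one finds $d_j(\mu)=jr_j(2g-2)-S_j$, hence $\dim T^+_{\mathcal E,j}(\mu)-\dim T^+_{\mathcal E,j}(0)=S_j-S_{j-1}$, and the sign you leave open resolves to
$$m_{\mathcal E}(t)=\prod_{j\ge1}\left(\frac{1-t^{j+1}}{1-t^{j}}\right)^{S_j}=\prod_{\substack{\alpha\in\Delta^+\\ \alpha(\omega_i^\vee)=1}}\frac{1-t^{\heightr(\alpha)+1}}{1-t^{\heightr(\alpha)}}=\mathcal D_{\omega_i^\vee}(t),$$
using minusculeness ($\alpha(\omega_i^\vee)\in\{0,1\}$ on $\Delta^+$) and $\alpha(\rho^\vee)=\heightr(\alpha)$; your polynomiality cross-check for the sign is also legitimate, since the hypotheses imply very stability by Theorem \ref{verystablecharact}. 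As for what each approach buys: the paper's computation yields the explicit closed forms of Table \ref{tablemults}, while yours eliminates all case analysis (notably for the exceptional types), explains structurally why minuscule coweights produce exactly Dynkin polynomials (the factor $(1-t^{j+1})/(1-t^{j})$ per root only reassembles into $(1-t^{\alpha(\rho^\vee+\mu)})/(1-t^{\alpha(\rho^\vee)})$ when $\alpha(\mu)\le 1$), and its intermediate product formula, valid for arbitrary $\mu$, is precisely what underlies the non-polynomiality observation of Remark \ref{virtmulthsr}. Note also that your $\mu=0$ identity is equivalent to Kostant's duality between the exponents and the partition of positive roots by height; your geometric derivation via the Hitchin section neatly sidesteps having to invoke that fact.
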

	
	\begin{remark}\label{virtmulthsr}
		Computing the virtual equivariant multiplicities for sums of non-minuscule fundamental coweights results in rational functions which are not polynomials, providing another indirect proof of one of the directions in Theorem \ref{verystablecharact}. This is not always the case: in other fixed point types (e.g. \cite{peon-nieto_wobbly_2023}), virtual equivariant multiplicities are often polynomials even in components without very stable points. 
	\end{remark}
	
	\bibliographystyle{plain}
	\bibliography{bibliography}
	
\end{document}